\documentclass[11pt]{amsart}

\usepackage{amssymb}
\usepackage{amsmath}
\usepackage{amsthm}
\usepackage{graphicx}
\usepackage{macros}
\usepackage[usenames,dvipsnames]{xcolor}

\usepackage{subcaption}
\captionsetup[subfigure]{labelfont=rm}
\usepackage{tikz}
\usepackage{float}
\usepackage{color}
\usepackage{hyperref}
\hypersetup{
  colorlinks,
  citecolor= Brown,
  linkcolor= link,
  urlcolor= link
}

\usepackage{appendix}
\usepackage{array}
\usepackage{footnote}
\usepackage{booktabs}
\usepackage{hhline}
\makesavenoteenv{tabular}

\definecolor{link}{rgb}{0.18,0.25,0.63}
\usepackage[top=3cm,bottom=3cm,left=2.5cm,right=2.5cm]{geometry}

\pagestyle{plain}
\numberwithin{equation}{section}

\DeclareMathOperator*{\esssup}{ess\,sup}
\DeclareMathOperator*{\essinf}{ess\,inf}
\DeclareMathOperator*{\supp}{\mathrm{supp}}

\makeatletter
\g@addto@macro{\endabstract}{\@setabstract}
\newcommand{\authorfootnotes}{\renewcommand\thefootnote{\@fnsymbol\c@footnote}}%
\makeatother

\begin{document}
\definecolor{link}{rgb}{0,0,0}

 \begin{center}
 \large
  \textbf{  ANALYTICAL ASPECTS OF SPATIALLY ADAPTED TOTAL VARIATION REGULARISATION}. \par \bigskip \bigskip
  
   \normalsize
  \textsc{Michael Hinterm\"uller} \textsuperscript{$\dagger$ $\ddagger$}, \textsc{Konstantinos Papafitsoros} \textsuperscript{$\dagger$},
  \textsc{and} \textsc{Carlos N. Rautenberg} \textsuperscript{$\ddagger$} \par \bigskip 

\let\thefootnote\relax\footnote{
\textsuperscript{$\dagger$}Weierstrass Institute for Applied Analysis and Stochastics (WIAS), Mohrenstrasse 39, 10117, Berlin, Germany
}
\let\thefootnote\relax\footnote{
\textsuperscript{$\ddagger$}Institute for Mathematics, Humboldt University of Berlin, Unter den Linden 6, 10099, Berlin, Germany}

\let\thefootnote\relax\footnote{
\hspace{3.2pt}Emails: \href{mailto:Michael.Hintermueller@wias-berlin.de}{\nolinkurl{Michael.Hintermueller@wias-berlin.de}},
 \href{mailto: Kostas.Papafitsoros@wias-berlin.de}{\nolinkurl{Kostas.Papafitsoros@wias-berlin.de}},}
 \let\thefootnote\relax\footnote{
 \hspace{37pt}\href{mailto:carlos.rautenberg@math.hu-berlin.de}{\nolinkurl{Carlos.Rautenberg@math.hu-berlin.de}}
 }

\end{center}

\definecolor{link}{rgb}{0.18,0.25,0.63}
\begin{abstract}
In this paper we study the structure of solutions of the one dimensional weighted total variation regularisation problem, motivated by its application in signal recovery tasks. We study in depth the relationship between the weight function and the creation of new discontinuities in the solution. A partial semigroup property relating the weight function and the solution is shown and analytic solutions for simply data functions are  computed. We  prove that the weighted total variation minimisation problem is well-posed even in the case of vanishing weight function, despite the lack of coercivity. This is based on the fact that the total variation of the solution is bounded by the total variation of the data, a result that it also shown here. Finally the relationship to the corresponding weighted fidelity problem is explored, showing that the two problems can produce completely different solutions even for very simple data functions.\vspace{0.1cm}

\noindent
\textbf{Keywords}: Total Variation Minimisation, Weighted Total Variation, Denoising, Structure of Solutions, Regularisation \end{abstract}

\section{Introduction}

A general task in mathematical signal reconstruction is to recover as best as possible a signal $u_{0}$, given a corrupted version $f$, 
which is generated by the following degradation process:
\begin{equation}\label{lbl:intro:general}
f=Tu_{0}+\eta.
\end{equation}
Here $T$ denotes a bounded, linear operator and $\eta$ is a random noise component. The mapping $T$ might be related to blurring, downscaling, Fourier or wavelet transform, among several others.
 The problem aforementioned reconstruction problem \eqref{lbl:intro:general} is typically ill-posed and variational regularisation methods are often employed for its solution. A specific, very successful regularisation model is given by total variation minimisation as introduced in the seminal work by Rudin, Osher and Fatemi \cite{rudin1992nonlinear}. In that paper, the authors considered the case $T=id$, i.e., the denoising task, and they proposed to recover an approximation $u$ of $u_{0}$ by solving the discrete version of the  minimisation problem
\begin{equation}\label{lbl:intro:rof}
\min_{u\in\bv(\om)} \frac{1}{2} \int_{\om} (f-u)^{2}dx+\alpha|Du|(\om),
\end{equation}
where $\alpha$ is a positive scalar  and $|Du|(\om)$ denotes the total variation of the function $u$. Here $\om$ represents a bounded, open domain with Lipschitz boundary. In image reconstruction tasks, $\om$ is typically a rectangle on which the image is defined. 

Ever since, total variation minimisation has been employed for a variety of image restoration tasks mainly due its edge-preserving ability. This stems from the fact that the minimisation  \eqref{lbl:intro:rof}, is performed over the space of functions of bounded variation $\bv(\om)$. We note that an element of $\bv(\om)$ may exhibit jump discontinuities. One disadvantage of the model \eqref{lbl:intro:rof}, on the other hand, is the promotion of piecewise constant structures in the solution $u$, a phenomenon known as staircasing effect. To overcome this, higher order extensions of the total variation have been proposed in the literature. Here, we only mention the review paper \cite{S15}, as well as the references collected in the introduction of \cite{papafitsorosphd}.

Another drawback of \eqref{lbl:intro:rof} originates from the fact that the regularisation strength is uniform over the entire image domain,  due to the regularisation parameter $\alpha$ being a scalar quantity only. This is particularly disadvantageous when the noise level or the amount of corruption in general, is not distributed uniformly throughout the image. Regularisation of uniform strength  is also undesirable when both fine scales details, e.g. texture, and large homogeneous areas are present in an image. In that case, one ideally should strongly regularise in  the smooth parts of the image and to a lesser degree in fine detailed areas in order for these details to be better preserved. Therefore, the introduction of spatially distributed weights in the minimising functional in \eqref{lbl:intro:rof} has been considered in the literature. This weight can be either introduced in the first term of \eqref{lbl:intro:rof}, i.e., the so-called fidelity or data fitting term, or be incorporated into the total variation functional. For the denoising case, this leads to the following two models
\begin{equation}\label{lbl:intro:weight_fid}
\min_{u\in\bv(\om)} \frac{1}{2}\int_{\om} w(f-u)^{2}dx+|Du|(\om),
\end{equation}
where $w\in L^{\infty}(\om)$ with $w\ge 0$, and
\begin{equation}\label{lbl:intro:weighted_rof}
\min_{u\in\bv(\om)} \frac{1}{2}\int_{\om} (f-u)^{2}dx+\int_{\om}\alpha(x)d|Du|,
\end{equation}
where $\alpha\in C(\overline{\om})$ with $\alpha>0$.
 Here $\alpha$ and $w$ are the two weight functions that determine locally the strength of the regularisation and the fidelity term respectively.  

Versions of the weighted fidelity model \eqref{lbl:intro:weight_fid} have been considered in \cite{Dong2010, Dong2010b} for Gaussian denoising and deblurring image restoration problems as well as in \cite{hint_rincon} for reconstructing images that have been corrupted by impulse noise. In these works, the weight function $w$ is selected based on local statistical estimators and the statistics of the extremes. An adaptation of this idea to TGV (total generalised variation) \cite{tgv}, a higher order extension of the total variation, can be found in \cite{bredies2013spatially}.
 A different statistical approach with variance estimators is considered in \cite{Almansa2007}.
Variants of \eqref{lbl:intro:weight_fid} are also studied in \cite{frick2012, Hotz2012543} using techniques based on a statistical multiresolution criterion. The model \eqref{lbl:intro:weight_fid} is also considered in \cite{bertalmio_caselles} where a piecewise constant weight function  is determined using a pre-segmentation of the image. 

The weighted total variation model \eqref{lbl:intro:weighted_rof} has been considered recently for image restoration purposes in \cite{hint_rau, hint_rau_tao_langer}. In these papers, the choice of the weight function $\alpha$ is done via a bilevel optimisation approach, see also \cite{carola_spatial_2, carola_spatial_1}. Moreover, apart from the classical denoising and deblurring tasks, the fact that the fidelity term in \eqref{lbl:intro:weighted_rof} appears without weights, allows the authors of \cite{hint_rau, hint_rau_tao_langer}  to consider problems also in Fourier and wavelet domains, e.g., Fourier and wavelet impainting, something which highlights an advantage of the model \eqref{lbl:intro:weighted_rof} over \eqref{lbl:intro:weight_fid}.  We also mention that recently, a weighted $\tv$ regularisation for vortex density models was studied in \cite{novaga_weighted}. 

While the analysis of the regularisation properties of the scalar total variation regularisation \eqref{lbl:intro:rof} has  received a considerable amount of attention in the literature \cite{ Acar94analysisof, ring2000structural, meyer2001oscillating, strong2003edge, chanL1, grasmair2007equivalence, caselles2007discontinuity,  Allard1, Allard2, Allard3, duvalL1, valkonen_jump_1, Jalalzai2015jmiv, poon_TV_geometric} this is not the case for the models \eqref{lbl:intro:weight_fid} and \eqref{lbl:intro:weighted_rof}. We note however two analytical contributions towards the weighted total variation model \eqref{lbl:intro:weighted_rof}. Specifically, in \cite{jalalzai2014discontinuities}, the author showed 
that the set of the jump discontinuities of the solution $u$ of \eqref{lbl:intro:weighted_rof} is essentially contained in the set that consists of the jump discontinuity points of the data $f$ and the jump discontinuity points of the gradient of the weight function $\alpha$. This result shows that the solution $u$ can potentially have jump discontinuities at points where the data function $f$ is continuous. Hence, if the weight function is smooth,  no new discontinuities are created. In the scalar parameter case, this was first shown in  \cite{caselles2007discontinuity} and subsequently in \cite{valkonen_jump_1} using a different technique. In \cite{novaga_weighted} the authors show, among others, that the maximal level set of the solution $u$ is flat and has positive measure, as it is also the case for the scalar total variation regularisation \cite{Jalalzai2015jmiv}.

However, there are still several open questions regarding the models \eqref{lbl:intro:weight_fid} and \eqref{lbl:intro:weighted_rof}. For instance, concerning the weighted total variation model \eqref{lbl:intro:weighted_rof}, one is interested to understand, under which specific conditions new discontinuities are created, and how  these are related to the weight function $\alpha$. It is also important to examine in what degree  the structure of solutions of the weighted total variation minimisation resembles the one of the solutions of the standard scalar minimisation problem. Finally, it is of importance to understand  the similarities and the differences of the two weighted models  \eqref{lbl:intro:weight_fid} and \eqref{lbl:intro:weighted_rof}.

In view of this, the purpose of the present paper is to answer the questions raised above as well as related ones, thus filling in that knowledge gap in the literature. We do that by a extended fine scale analysis of the regularisation properties of the one dimensional versions of the problems \eqref{lbl:intro:weight_fid} and \eqref{lbl:intro:weighted_rof}. We should note, however, that the majority of our results concern the weighted total variation model \eqref{lbl:intro:weighted_rof}, since, as we will see in the following sections,  it is the one that exhibits a greater variety of interesting properties.

\subsection*{Summary of the results and organisation of the paper}

For the reader's convenience we provide here a short summary of our results, stating as well the sections of the paper that each of these belongs to. The results are put into perspective with the literature in the corresponding sections.
\subsubsection*{Structure of solutions-creation of new discontinuities} 
After fixing the notation and recalling some preliminary facts  in Section \ref{sec:not_prel},  we study in Sections \ref{sec:optimality}--\ref{sec:discontinuities} the weighted total variation problem \eqref{lbl:intro:weighted_rof}  and the conditions under which new discontinuities are created in its solution $u$. We give a simple proof of a refined version of the result in \cite{jalalzai2014discontinuities} in Proposition \ref{lbl:jump_set_incl}, showing that new jump discontinuities can potentially be created at the points where the weight function $\alpha$ is not differentiable. Note that in the case where $\alpha'\in\bv(\om)$ these are exactly the set of jump discontinuity points of $\alpha'$, i.e., the set of points $x\in \om$ such that $|D\alpha'|(\{x\})>0$. In fact, we show that in order for a new discontinuity to be created at $x$, there must hold $D\alpha'(\{x\})>0$, i.e., the derivative of $\alpha'$ must have a positive jump, Proposition \ref{lbl:lipschitz_alpha}. In contrast, if $D\alpha'(\{x\})<0$ then a plateau is created for the solution $u$ around $x$. Furthermore, we show that in every point $x\in\om$, the following estimate holds
\begin{equation}\label{intro:jump_estimate}
|Du|(\{x\})\le |Df|(\{x\})+|D\alpha'|(\{x\}),
\end{equation}
see Propositions \ref{lbl:lipschitz_alpha}, \ref{lbl:correct_jump} as well as Corollary \ref{lbl:lipschitz_alpha_ii}. 
Moreover, in the weighted case, the jump of $u$ can have different direction from the one of the data $f$, something that does not occur in the scalar case. We show however that the jumps  of $f$ and $u$ at a point,  have the same direction when $\alpha$ is differentiable but are not necessarily  \emph{nested}, see Proposition \ref{lbl:correct_jump} and the numerical examples of Figure \ref{fig:counterexamples}. Finally it is shown that if $\alpha'$ is large enough in an area, then $u$ is constant there, Proposition \ref{lbl:large_gradient_plateau}. Thus, it is not only high values of $\alpha$ that can produce flat areas as someone might expect, but also high values of $\alpha'$.

\subsubsection*{A partial semigroup property}
In Section  \ref{sec:semigroup} we show that, denoting by $S_{\alpha}(f)$ the solution of \eqref{lbl:intro:weighted_rof} with data $f$ and weight $\alpha$, it holds
\[S_{\alpha_{1}+\alpha_{2}}(f)=S_{\alpha_{2}}(S_{\alpha_{1}}(f)),\]
provided $\alpha_{2}$ is a scalar. This is shown in Proposition \ref{lbl:semigroup} and we call this property \emph{partial semigroup property}. On the other hand, one can easily construct counterexamples where this property fails even in the case where $\alpha_{1}$ is scalar and $\alpha_{2}$ is not, see Figures \ref{fig:sg_spikes} and \ref{fig:sg2}, i.e., unlike the full scalar case, this partial semigroup property is not  commutative, something perhaps surprising.  
 
 \subsubsection*{Analytic solutions} 
 In Section \ref{sec:exact} we compute some analytic solutions for simple data and weight functions. In particular, we take as data a family of affine functions and as  weight functions, a family of absolute value type functions. The formulae of the solutions are summarised  in Proposition \ref{lbl:exact_affine_abs}, also  depicted in Figure \ref{fig:affine_abs}. Note that this is the first example, where the creation of new discontinuities is computed analytically. 
 
\subsubsection*{A bound on the total variation of the solution}
In Section \ref{sec:boundTV} we show that  for the solution of the weighted total variation minimisation problem \eqref{lbl:intro:weighted_rof}, the following estimate holds
\begin{equation}\label{intro:boundtv}
|Du|(\om)\le |Df|(\om).
\end{equation}
Unlike the scalar case, the proof of \eqref{intro:boundtv} is quite involved  and uses some fine scale analysis.
We do that initially for differentiable weight $\alpha$ in Theorem \ref{lbl:TVu_less_TVf} and then for continuous one in Theorem \ref{lbl:TVu_less_TVf_2}.

\subsubsection*{Vanishing weight function $\alpha$} 
Provided that $f\in\bv(\om)$,  we show in Section \ref{sec:alpha_zero} the existence of solutions for \eqref{lbl:intro:weighted_rof} even when $\alpha\ge 0$, despite the lack of coercivity of the minimising functional, see Theorem \ref{lbl:well_alpha_zero}. Letting $\alpha$ having zero values, can allow an exact recovery of piecewise constant functions, as we show in Proposition  \ref{lbl:exact_pc}. 

\subsubsection*{Relationship of the models \eqref{lbl:intro:weight_fid} and \eqref{lbl:intro:weighted_rof}}
In Section \ref{sec:weight_fid}, we show that the structure of the solutions of the weighted fidelity problem \eqref{lbl:intro:weight_fid} is simpler and resembles more the one of the scalar case. We prove that no new discontinuities are created, provided that $w>0$, Proposition \ref{lbl:weight_fid_discont}. Moreover, by considering the same family of simple affine data functions for which we computed analytic solutions for the problem \eqref{lbl:intro:weighted_rof}, we see that the solutions here are much simpler, see Proposition \ref{lbl:affine_exact_fid}. Interestingly, for these specific data functions,  the sets of solutions of the problems  \eqref{lbl:intro:weight_fid} and \eqref{lbl:intro:weighted_rof} are totally different, regardless of the choice of weight functions $\alpha$ and $w$. In fact, the only common solutions that they have are the ones that can  be also obtained by the standard scalar total variation minimisation, see Proposition \ref{lbl:alpha_cannot} and   Figure \ref{fig:venn}. This shows how different can the models \eqref{lbl:intro:weight_fid} and \eqref{lbl:intro:weighted_rof} be, even for very simple data functions.

\section{Notation and Preliminaries}\label{sec:not_prel}

Functions of bounded variation play a central role in this paper. Standard references are the books \cite{AmbrosioBV, attouch2014variational, EvansGariepy, giusti1984minimal}. Here we follow the notation of \cite{AmbrosioBV}. Let $\om\subseteq \RR^{d}$ be a open set, $d\in\NN$. 
 Given a finite Radon measure $\mu\in\cM(\om)$ we denote by $|\mu|$ its total variation measure and by $\sgn(\mu)$ the unique $L^{1}(\om,|\mu|)$ function such that $\mu=\sgn(u)|\mu|$. That is to say $\sgn(\mu)$ is the Radon-Nikod\'ym derivative
$\sgn(\mu)=\frac{d\mu}{d|\mu|}$,
which is equal to $1$ $|\mu|$--almost everywhere.
 A function $u\in L^{1}(\om)$ is said to be a \emph{function of bounded variation} if its distributional derivative is represented by a $\RR^{d}$-valued finite Radon measure, denoted by $Du$.
Equivalently, $u$ is a function of bounded variation if its \emph{total variation} $\tv(u)$ is finite, where
\[\tv(u):=\sup \left \{\int_{\om}u\,\di v\,dx:\; v\in C_{c}^{1}(\om,\RR^{d}),\; \|v\|_{\infty}\le 1 \right \},\]
 and in that case it can be shown that $\tv(u)=|Du|(\om)$. The space of functions of bounded variation is denoted by $\bv(\om)$ and is a Banach space under the norm $\|u\|_{\bv(\om)}=\|u\|_{L^{1}(\om)}+|Du|(\om)$. The measure $Du$ can be decomposed into the absolutely continuous and the singular part with respect to the Lebesgue measure $\mathcal{L}$, $D^{a}u$ and $D^{s}u$ respectively, i.e.,
 $Du=D^{a}u+D^{s}u$.
 
In this paper,  emphasis is given on the functions of bounded variation of one variable and in particular on the notion of \emph{good representatives}. In order to define these, let $\om=(a,b)$ be a bounded open interval in $\RR$. For a function $u:\om \to \RR$, the \emph{pointwise variation} of $u$ in $\om$ is defined as 
\[\mathrm{pV}(u,\om)=\sup \left \{\sum_{i=1}^{n-1} |u(x_{i+1})-u(x_{i})|:\;n\ge 2,\; a<x_{1}<\cdots<x_{n}<b \right \},\]
and the \emph{essential variation} as
\[\mathrm{eV}(u,\om)=\inf \big\{\mathrm{pV}(v,\om): v=u,\; \cL-\text{a.e.}\text{ in }\om \big \}. \]
It turns out that when $u\in \bv(\om)$ then $|Du|(\om)=\mathrm{eV}(u,\om)$ and in fact the infimum in the definition of $\mathrm{eV}(u,\om)$ is attained. The functions in the equivalence class of $u$ that attain this infimum are called good representatives of $u$. That is to say $\tilde{u}$ is a good representative of $u$ if $\tilde{u}=u$ Lebesgue--almost everywhere and
\[\mathrm{pV(\tilde{u})}=\mathrm{eV}(u)=|Du|(\om).\]
 We denote by $J_{u}$ the at most countable set of atoms of $Du$ (jump set of $u$), i.e., $J_{u}=\{x\in \om:\; |Du|(\{x\})\ne 0\}$. If $Du(\{x\})> 0$ we say that $u$ has a positive jump at $x$, whereas if $Du(\{x\})< 0$ we say that $u$ has a negative jump at $x$.
It can be shown that there exists a unique $c\in \RR$ such that the functions 
\[
u^{l}(x):=c+Du((a,x)),\quad
u^{r}(x):=c+Du((a,x]),
\]
are good representatives of $u$. Note that $u^{l}$ and $u^{r}$ are left and right continuous respectively. The following equalities also hold
\[
u^{l}(x)= \lim_{\delta\shortdownarrow 0} \int_{x-\delta}^{x} u(t)dt,\quad 
u^{r}(x)= \lim_{\delta\shortdownarrow 0} \int_{x}^{x+\delta} u(t)dt,\quad \text{for all } x\in\om.
\]
 Any other function $\tilde{u}:\om\to \RR$ is a good representative of $u$ if and only if 
\[\tilde{u}(x)\in \left \{\theta u^{l}(x)+(1-\theta)u^{r}(x):\;\theta\in [0,1] \right \}.\] 
As a result, the following functions are also good representatives
\[
\overline{u}(x):=\max(u^{l}(x),u^{r}(x)),\quad
\underline{u}(x):=\min(u^{l}(x),u^{r}(x)).
\]
The right and the left limits of any good representative $\tilde{u}$ exist at any point of $x\in\om$ and 
$
\tilde{u}(x_{+})=u^{r}(x)$, 
$\tilde{u}(x_{-})=u^{l}(x).
$
Every good representative of $u$ is continuous at the complement of the jump set of $u$ i.e., in the set $\{x\in\om:\; |Du|(\{x\})=0\}$. 

We denote by $u'$ the density of $D^{a}u$ with respect to the Lebesgue measure, i.e., 
\[Du=u'\mathcal{L}+D^{s}u.\]
If $u\in W^{1,1}(\om)$ then $u'$ is the standard weak derivative of $u$.

Recall some basic notions from convex analysis. If $X$, $X^{\ast}$ are two vector spaces placed in duality and $F:X\to \RR\cup \{+\infty\}$ then $F^{\ast}$ denotes the convex conjugate of $F$
\[F^{\ast}(x^{\ast}):=\sup _{x\in X} \langle x^{\ast}, x \rangle -F(x).\]
The subdifferential of $F$ is denoted as usual by $\partial F$. Given $A\subseteq X$ then $\mathcal{I}_{A}$ denotes the indicator function of $A$
\[\mathcal{I}(x)=
\begin{cases}
0, & \text{ if }x\in A,\\
+\infty, & \text{ if }x\notin A.
\end{cases}
\]

We finally note that whenever we write \emph{total variation regularisation} or \emph{total variation minimisation} we always mean the total variation denoising problem with $L^{2}$ fidelity term.

\section{Weighted total variation with strictly positive weight function $\alpha$}\label{sec:main_weighted_rof}
The problem we are considering here is the one dimensional weighted total variation regularisation problem with $L^{2}$ fidelity term, i.e.,
\begin{equation}\label{weighted_rof}
\min_{u\in\bv(\om)} \frac{1}{2}\int_{\om}(f-u)^{2}dx+\int_{\om}\alpha(x)d|Du|,
\end{equation}
where $\om=(a,b)$, $f\in\bv(\om)$ and $\alpha\in C(\overline{\om})$ with $\alpha>0$.
Thus, there exist constants $0<c_{\alpha}\le C_{\alpha}<\infty$ such that
\[0<c_{\alpha}\le \alpha(x)\le C_{\alpha}<\infty,\quad \text{ for all }x\in\om.\]
The well-posedness (existence and uniqueness) of \eqref{weighted_rof} in all dimensions, i.e., when $\om\subseteq \RR^{d}$, is proven via the direct method of calculus of variations taking advantage of the fact that a strictly positive weight function $\alpha$ provides the necessary coercivity to the weighted $\tv$ functional, see \cite{hint_rau} for details. Among others, the authors in \cite{hint_rau}  prove that for the anisotropic version of weighted $\tv$ it holds
\begin{equation}\label{weighted_tv_dual_Hdiv}
\int_{\om}\alpha(x)d|Du|=\sup \left\{\int_{\om}u\,\di v\,dx:\; v\in H_{0}(\om, \di),\;|v_{i}(x)|\le \alpha (x), \text{ a.e. }i=1,\ldots,d \right \}.
\end{equation}
Using also appropriate density arguments, the isotropic version of \eqref{weighted_tv_dual_Hdiv} reads
\begin{equation}\label{weighted_tv_dual}
\int_{\om}\alpha(x)d|Du|=\sup \left\{\int_{\om}u\, \di v\,dx:\; v\in C_{c}^{1}(\om,\RR^{d}),\;|v(x)|\le \alpha (x), \text{ for every }x\in\om\right \},
\end{equation}
where in the expression above $|\cdot|$ denotes the Euclidean norm in $\RR^{d}$. It is then clear that the weighted $\tv$ is lower semicontinuous with respect to the strong convergence in $L^{1}$.

\subsection{Optimality conditions}\label{sec:optimality}
We now proceed to the derivation of the optimality conditions for the minimisation problem \eqref{weighted_rof}. This is done via the Fenchel-Rockafellar duality theory, see for instance \cite{ekeland1976convex}. 

We start with some useful  definitions. For a finite Radon measure $\mu\in\cM(\om)$ we define
\begin{equation}\label{Sgn}
\Sgn(\mu):=\left\{v\in L^{\infty}(\om)\cap L^{\infty}(\om,\mu):\; \|v\|_{\infty}\le 1,\;v=\sgn(\mu),\; |\mu|-\text{a.e.} \right\},
\end{equation}
i.e., the set of all the functions $v$ that are $\mu$-almost everywhere equal to $\frac{d\mu}{d|\mu|}$ with the extra property that their absolute values is less than $1$, Lebesgue--almost everywhere. The definition \eqref{Sgn} originates from \cite{BrediesL1}. 
For a function $\alpha\in C(\overline{\om})$, we also define
\begin{equation}\label{aSgn}
\alpha(x) \Sgn(\mu):=\left\{v\in L^{\infty}(\om)\cap L^{\infty}(\om,\mu):\; v=\alpha \tilde{v} \text{ for some } \tilde{v}\in \Sgn(\mu) \right\}.
\end{equation}
Notice that we  slightly abuse the notation in the definition \eqref{aSgn} where we  denote the set by ``$\alpha(x) \Sgn(\mu)$'' instead of ``$\alpha\Sgn(\mu)$'' in order to stress the fact that $\alpha$ is not necessarily a constant function.

The following proposition is an extension of \cite[Lemma 3.5]{BrediesL1} to the weighted case.

\newtheorem{alpha_sgn}{Lemma}[section]
\begin{alpha_sgn}[Subdifferential of the weighted Radon norm]\label{lbl:alpha_sgn}
Let $\alpha\in C(\overline{\om})$. Consider the map $\|\cdot\|_{\cM,\alpha}:\cM (\om)\to \RR$ where
\[\|\mu\|_{\cM,\alpha}=\int_{\om}\alpha(x)d|\mu|,\quad \mu\in\cM(\om).\]
Then for every $\mu\in\cM(\om)$
\[\partial \|\cdot\|_{\cM,\alpha} (\mu) \cap C_{0}(\om)=\alpha(x) \Sgn(\mu)\cap C_{0}(\om),\]
\end{alpha_sgn}

\begin{proof}
Fix $\mu\in\cM(\om)$ and let $v\in \partial \|\cdot\|_{\cM,\alpha} (\mu) \cap C_{0}(\om)$. Then  
\begin{align}
\int_{\om}\alpha(x)d|\mu|+\int_{\om}v(x)d(\nu-\mu)&\le \int_{\om}\alpha(x) d|\nu| \quad \text {for every }\nu\in \cM(\om)\Rightarrow\label{asgn_1}\\
					      \int_{\om}v(x)d(\nu-\mu)&\le \int_{\om} \alpha (x)d|\nu-\mu| \quad \text {for every }\nu\in \cM(\om) \Rightarrow\notag\\
		\int_{\om}v(x)d\nu&\le \int_{\om} \alpha (x)d|\nu| \quad \text {for every }\nu\in \cM(\om). \label{asgn_2}			      
\end{align}
From the  inequality \eqref{asgn_2} we deduce that
\begin{equation}\label{asgn_equi_1}
|v(x)|\le \alpha (x)\quad \text{for every }x\in \om.
\end{equation}
Observe that it also holds
\begin{equation}\label{asgn_equi_2}
\int_{\om}v(x)d\mu=\int_{\om}\alpha(x)d|\mu|.
\end{equation}
Indeed, just consider \eqref{asgn_1} with $\nu=0$ and $\nu=2\mu$. One can readily check that if  a function $v\in C_{0}(\om)$ satisfies \eqref{asgn_equi_1}--\eqref{asgn_equi_2} then $v\in  \partial \|\cdot\|_{\cM,\alpha} (\mu) \cap C_{0}(\om)$. Then it just suffices to check that a function $v\in C_{0}(\om)$  satisfies \eqref{asgn_equi_1}--\eqref{asgn_equi_2} if and only if $v\in \alpha(x) \Sgn(\mu)\cap C_{0}(\om)$. The ``if'' implication is immediate from the definition of $\alpha(x) \Sgn(\mu)$. For the ``only if'' part, by considering the polar decomposition $\mu=\sgn(\mu)|\mu|$ we have
\[
\int_{\om}v(x)d\mu=\int_{\om}\alpha(x)d|\mu| \Rightarrow\\
\int_{\om}(v(x)\sgn(\mu)(x)-\alpha(x))d|\mu|=0,
\]
which, with the help of \eqref{asgn_equi_1}, implies that
\[v(x)\sgn(\mu)(x)=\alpha(x)\quad \text{ for }|\mu|\text{-almost every }x \;\Rightarrow\]
\[v(x)=\sgn(\mu)(x)\alpha(x)\quad \text{ for }|\mu|\text{-almost every }x.\]
Thus, $v\in \alpha(x)\Sgn(\mu)\cap C_{0}(\om)$ and the proof is complete.
\end{proof}

We define now the predual problem of \eqref{weighted_rof}:
\begin{equation}\label{predual}
-\min \left \{\frac{1}{2} \int_{\om} (v')^{2}dx +\int_{\om}fv' dx:\; v\in H_{0}^{1}(\om),\;|v(x)|\le \alpha(x), \text{ for every } x\in \om \right \}.
\end{equation}
The fact that the minimum in \eqref{predual} is attained by a unique $H_{0}^{1}$ function, can be shown  easily using standard techniques. In order to be convinced that \eqref{predual} is indeed the predual of \eqref{weighted_rof} define
\begin{align*}
\Lambda&: H_{0}^{1}(\om)\to L^{2}(\om)\quad \text{with}\quad \Lambda(v)=v',\\
G&: L^{2}(\om)\to\RR \quad \text{with}\quad G(\psi)=\frac{1}{2}\int_{\om} \psi^{2}dx+\int_{\om} f\psi\, dx,\\
F&: H_{0}^{1}(\om)\to \overline{\RR} \quad \text{with}\quad F(v)=\mathcal{I}_{\{|\cdot (x)|\le \alpha (x),\,\forall x\in\om\}}.
\end{align*}
Then it is easy to verify that the problem \eqref{predual} is equivalent to 
\begin{equation}\label{predual_FG}
-\min_{v\in H_{0}^{1}(\om)} F(v)+G(\Lambda v).
\end{equation}
Now the dual problem of \eqref{predual_FG} is defined as \cite{ekeland1976convex} 
\begin{equation}\label{dual_FG}
\min_{u\in L^{2}(\om)^{\ast}}  F^{\ast} (-\Lambda ^{\ast} u) +G^{\ast}(u).
\end{equation}
After a few computations the problem \eqref{dual_FG} can be shown to be equivalent with our main problem \eqref{weighted_rof}. The proof follows closely the analogue proofs in \cite{ring2000structural}, \cite{BrediesL1} and \cite{Papafitsoros_Bredies} for the corresponding $L^{2}$--$\tv$ (scalar case), $L^{1}$--$\tgv$ and $L^{2}$--$\tgv$ minimisations and thus we omit it. We note here that the derivation of the predual problem of \eqref{weighted_rof} in higher dimensions is  more involved, see \cite{hint_rau}.
The solutions of the problems \eqref{predual_FG} and \eqref{dual_FG} are linked through the optimality conditions:
\begin{align*}
v&\in \partial F^{\ast} (-\Lambda^{\ast} u),\\
\Lambda v&\in \partial G^{\ast}(u),
\end{align*}
which, after a few calculations, can be reformulated as
\begin{align*}
v'&=f-u,\\
-v&\in \alpha(x) \Sgn(Du).
\end{align*}
Summarising, the following proposition holds.
\newtheorem{optimality}[alpha_sgn]{Proposition}
\begin{optimality}[Optimality conditions for weighted $\tv$ minimisation]\label{lbl:optimality}
Let $\om=(a,b)$, $f\in\bv(\om)$ and $\alpha\in C(\overline{\om})$ with $\alpha>0$. A function $u\in\bv(\om)$ is the solution to the minimisation problem
\[\min_{u\in\bv(\om)} \frac{1}{2}\int_{\om}(f-u)^{2}dx+\int_{\om}\alpha(x)d|Du|,\]
if and only if there exists a function $v\in H_{0}^{1}(\om)$ such that 
\begin{align}
v'&=f-u,\label{opt1}\\
-v&\in \alpha(x) \Sgn(Du).\label{opt2}
\end{align}
\end{optimality}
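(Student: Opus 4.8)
The plan is to carry out the Fenchel--Rockafellar programme that has just been set up, supplying the steps flagged as ``a few computations''. Working with the predual pair \eqref{predual_FG} and its dual \eqref{dual_FG}, the first task is to identify both conjugates explicitly. Since $G(\psi)=\tfrac12\int_\om\psi^2+\int_\om f\psi$ is a continuous quadratic on $L^2(\om)$, a completion of squares gives $G^*(u)=\tfrac12\int_\om(f-u)^2\,dx$, so $G^*$ reproduces the fidelity term and $\partial G^*$ is single valued. For $F^*$ I would compute, for $u\in L^2(\om)$,
\[
F^*(-\Lambda^*u)=\sup\Big\{-\int_\om u\,v'\,dx:\ v\in H_0^1(\om),\ |v(x)|\le\alpha(x)\ \forall x\in\om\Big\},
\]
and recognise, via the dual representation \eqref{weighted_tv_dual} of the weighted total variation together with the density argument already invoked to pass from \eqref{weighted_tv_dual_Hdiv} to \eqref{weighted_tv_dual}, that this supremum equals $\int_\om\alpha\,d|Du|$ when $u\in\bv(\om)$ and $+\infty$ otherwise. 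This is precisely what makes the dual problem \eqref{dual_FG} coincide with the main problem \eqref{weighted_rof}.

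Next I would settle the qualification and existence issues needed for strong duality. Existence of a unique minimiser $v\in H_0^1(\om)$ of the predual \eqref{predual} is immediate from the direct method, the objective being strictly convex and coercive on the nonempty, closed, convex admissible set $\{v\in H_0^1(\om):|v(x)|\le\alpha(x)\}$, which contains $0$ because $\alpha>0$. For the absence of a duality gap I would verify the standard constraint qualification, namely that $G$ is finite and continuous at some $\Lambda v_0$ with $F(v_0)<\infty$; taking $v_0=0$ suffices since $G$ is continuous everywhere on $L^2(\om)$. Hence strong duality holds and the extremality relations $v\in\partial F^*(-\Lambda^*u)$ and $\Lambda v\in\partial G^*(u)$ are equivalent to joint optimality of the pair $(v,u)$.

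It then remains to reformulate the two inclusions. The relation $\Lambda v\in\partial G^*(u)$ reduces, by the single valuedness established above, to the identity \eqref{opt1}. The inclusion $v\in\partial F^*(-\Lambda^*u)$ is the substantive one: on $\bv(\om)$ the map $u\mapsto F^*(-\Lambda^*u)$ is exactly the weighted Radon functional $\|D\cdot\|_{\cM,\alpha}$ of Lemma \ref{lbl:alpha_sgn}, and I would apply that lemma --- the chain rule through $\Lambda$ being responsible for the sign appearing in \eqref{opt2} --- to convert the inclusion into the everywhere bound $|v(x)|\le\alpha(x)$ together with the $|Du|$-a.e.\ sign condition, that is, into \eqref{opt2}. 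The point that makes the lemma applicable is the one--dimensional embedding $H_0^1(\om)\hookrightarrow C_0(\overline{\om})$: the predual optimiser $v$ is continuous and vanishes at the endpoints, so it lies in the $C_0(\om)$ slice on which Lemma \ref{lbl:alpha_sgn} characterises the subdifferential. Both implications of the ``if and only if'' then follow from this equivalence, sufficiency being a direct Fenchel--Young saturation argument (the two inclusions force the Fenchel--Young inequalities for $F$ and $G$ to be equalities, so the gap vanishes and $u$ is optimal) and necessity following from strong duality together with the existence of the predual solution.

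The step I expect to be the main obstacle is the careful passage between function spaces in the subdifferential calculation: $F^*$ is a priori a functional on $(H_0^1(\om))^\ast$, and one must first identify its finite part with the weighted total variation of $u\in\bv(\om)$ through the relaxation/density argument behind \eqref{weighted_tv_dual}, and then ensure that the chain rule through $\Lambda$ and the restriction to $C_0(\om)$ are compatible, so that the abstract inclusion $v\in\partial F^*(-\Lambda^*u)$ genuinely delivers the pointwise everywhere constraint on $v$ together with the $|Du|$-a.e.\ sign relation of Lemma \ref{lbl:alpha_sgn}. Everything else --- the conjugate computations, the coercivity, and the qualification --- is routine.
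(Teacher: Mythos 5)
Your proposal follows exactly the Fenchel--Rockafellar route that the paper itself uses: the same predual \eqref{predual}, the same triple $F$, $G$, $\Lambda$, the identification of $G^{\ast}$ with the fidelity term and of $F^{\ast}(-\Lambda^{\ast}u)$ with the weighted total variation via \eqref{weighted_tv_dual}, and the conversion of the extremality relations into \eqref{opt1}--\eqref{opt2} through Lemma \ref{lbl:alpha_sgn} and the embedding $H_{0}^{1}(\om)\hookrightarrow C_{0}(\om)$. The paper deliberately omits these computations, referring to the analogous arguments in \cite{ring2000structural, BrediesL1, Papafitsoros_Bredies}; your write-up supplies precisely those omitted steps and is correct.
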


Observe here that Proposition \ref{lbl:optimality} still holds when $f\in L^{2}(\om)$. This is useful in the context of image denoising, where  $f$ is a noisy, perhaps strongly oscillating function, modelled as an element outside $\bv(\om)$.  Here, in contrast, we assume that $f\in\bv(\om)$, since in this study, we are more interested in the structural properties of weighted $\tv$ minimisation than addressing the entire reconstruction problem. Observe that since we are in dimension one, this also implies that we have more than $H_{0}^{1}$ regularity for the function $v$. Indeed, $v'\in\bv(\om) \subseteq L^{\infty}(\om)$ and in particular $v$ is a Lipschitz function.

\subsection{Structure of solutions -- creation of new discontinuities}\label{sec:discontinuities}

One can already notice a basic difference between the scalar and the weighted total variation regularisation. Indeed, when $\alpha(x)=\alpha\in\RR$ for every $x\in\om$, the optimality conditions \eqref{opt1}--\eqref{opt2} imply that when $\overline{f}<\underline{u}$ (or $\underline{f}>\overline{u}$) then $Du=0$ there. That is to say, the solution $u$ is constant in the areas where it is not equal to the data $f$, a well-known characteristic of total variation minimisation \cite{ring2000structural}. In the weighted case, however, the optimality conditions \eqref{opt1}--\eqref{opt2} do not enforce such a behaviour. In this section, using a series of propositions and numerical examples we highlight the differences between the scalar and the weighted case as far as the structure of solutions is concerned. Particular emphasis is given on the discontinuities of the solution $u$. Recall here that one of the few analytical results concerning the weighted $\tv$ regularisation is that of Jalalzai \cite{jalalzai2014discontinuities}. There, the author shows that given $\om\subseteq \RR^{d}$ open, bounded with Lipschitz boundary, data $f\in \bv(\om)\cap L^{\infty}(\om)$, and a bounded, Lipschitz continuous weight function $\alpha$ with the extra property that $\nabla \alpha \in \bv(\om)$, then 
\begin{equation}\label{jump_jal}
J_{u}\subseteq J_{f}\cup J_{\nabla \alpha},
\end{equation}
up to $\mathcal{H}^{d-1}$ negligible set. Here $\mathcal{H}^{d-1}$ denotes the $(d-1)$-dimensional Hausdorff measure. This result shows that new jump discontinuities can potentially appear in the solution $u$ at points where the derivative of the weight function also has a jump. This is in strong contrast to the scalar $\tv$ minimisation where  the discontinuities of the solution can only occur in points where the data $f$ is discontinuous \cite{caselles2007discontinuity, valkonen_jump_1}. Note that this also true in the weighted case when $\alpha\in C^{1}(\om)$ since then $J_{\nabla \alpha}=\emptyset$. 

Here we investigate in detail, the creation of new discontinuities in the one dimensional regime. We will show with analytical and numerical results that at least in dimension one, the inclusion \eqref{jump_jal} is sharp. In order to develop an intuition for this phenomenon, we start with a simple proof of \eqref{jump_jal} in the one dimensional case. Note that we do not assume here that $\alpha$ is Lipschitz continuous with $\alpha'\in \bv(\om)$. 

\newtheorem{jump_set_incl}[alpha_sgn]{Proposition}
\begin{jump_set_incl}\label{lbl:jump_set_incl}
Let $u\in\mathrm{BV}(\om)$ be a solution to \eqref{weighted_rof} and let $x\in\Omega$ such that $\alpha$ is differentiable at $x$ and $|Df|(\{x\})=0$, i.e., $x\notin J_{f}$. Then $x\notin J_{u}$.
\end{jump_set_incl}

\begin{proof}
Suppose, towards contradiction, that $x\in J_{u}$, i.e., $|Du|(\{x\})>0$. Without loss of generality we  assume that $Du(\{x\})>0$ since the case $Du(\{x\})<0$ is treated analogously. Hence, we have
\begin{equation}\label{left_bigger_right}
u^{l}(x)<u^{r}(x).
\end{equation}
Since $|Df|(\{x\})=0$ we have that any good representative $\tilde{f}$ of $f$ is continuous at $x$. Using \eqref{left_bigger_right}, the continuity of $\tilde{f}$, the left and right continuity of $u^{l}(x)$ and $u^{r}(x)$, respectively, we have that there exist a small enough $\epsilon>0$ and two constants $m<M$ such that
\begin{equation}\label{sup_less_inf}
\sup_{t\in (x,x+\epsilon)} \tilde{f}(t)-u^{r}(t)\le m<M \le \inf_{t\in (x-\epsilon,x)} \tilde{f}(t)-u^{l}(t).
\end{equation}
With the help of \eqref{opt1}, the above inequalities are translated into
\begin{equation}\label{esssup_less_essinf}
\esssup_{t\in (x,x+\epsilon) } v'(t)\le m<M \le \essinf_{t\in (x-\epsilon,x)} v'(t).
\end{equation}
Since $Du(\{x\})>0$, condition \eqref{opt2} dictates that 
\[v(x)=-\alpha(x).\]
Using now the fundamental theorem of calculus along with \eqref{esssup_less_essinf} we get that for every $t\in (x,x+\epsilon)$
\begin{align*}
v(t)&= -\alpha(x)+\int_{x}^{t}v'(t)dt\\
     &\le -\alpha(x)+m(t-x),
\end{align*}
and for every $t\in (x-\epsilon,x)$
\begin{align*}
v(t)&= -\alpha(x)+\int_{t}^{x}-v'(t)dt\\
     &\le  -\alpha(x) +M(t-x).
\end{align*}
Using the fact that $-\alpha(t)\le v(t)$ for every  $t\in\om$ and condition \eqref{opt2}, we further calculate
\begin{equation}\label{left_limit}
\lim_{t\to x-} \frac{\alpha(x)-\alpha(t)}{x-t}\le \frac{\alpha(x)+v(t)}{x-t}\le \frac{M(t-x)}{x-t}= -M
\end{equation}
and 
\begin{equation}\label{right_limit}
\lim_{t\to x+} \frac{\alpha(t)-\alpha(x)}{t-x}\ge \frac{-v(t)-\alpha(x)}{t-x}\ge \frac{-m(t-x)}{t-x}=-m.
\end{equation}
The inequalities \eqref{left_limit}--\eqref{right_limit} contradict the differentiability of $\alpha$ at $x$ and thus the proof is complete.
\end{proof}

Even though it is now clear that non-differentiablity of $\alpha$ can potentially lead to the creation of new discontinuities, as the next proposition shows this is not always the case. In particular, we show in what follows that if $\alpha$ has an upward spike at a point $x$, then the solution $u$ of \eqref{weighted_rof} is constant in a neighbourhood of $x$; see Figure \ref{fig:spikeup} for an illustration.

\newtheorem{alpha_spike_up}[alpha_sgn]{Proposition}
\begin{alpha_spike_up}\label{lbl:alpha_spike_up}
Let  $\alpha\in C(\overline{\om})$ and $x\in\om$ such that $\alpha\in C(\overline{\om})$ is differentiable in a neighbourhood of $x$ (but not at $x$) with
\[\lim_{t\to x-} \alpha'(t)=+\infty\quad \text{and}\quad \lim_{t\to x+}\alpha'(t)=-\infty.\]
Then, if $u$ is the solution of \eqref{weighted_rof} with weight function $\alpha$ and some given data $f\in\bv(\om)$, then there exists an $\epsilon>0$ such that $|Du|((x-\epsilon,x+\epsilon))=0$, i.e., $u$ is constant in $(x-\epsilon,x+\epsilon)$.
\end{alpha_spike_up}

\begin{proof}
We show first that there exists an $\epsilon>0$ such that $|Du|((x,x+\epsilon))=0$. Indeed otherwise, using the condition \eqref{opt2}, we can assume without loss of generality, that there exists a decreasing sequence $(t_{n})_{n\in \NN}$ such that $t_{n}\downarrow x$ with $x<t_{n}$ and 
\[v(t_{n})=-\alpha(t_{n}),\quad \text{for every } n\in\NN.\]
But then, using the mean value theorem, we have for some $t_{n+1}<\xi_{n}<t_{n}$
\begin{align*}
\frac{|v(t_{n})-v(t_{n+1})|}{|t_{n}-t_{n+1}|}&=\frac{|\alpha(t_{n})-\alpha(t_{n+1})|}{|t_{n}-t_{n+1}|}\\
									    &=|\alpha'(\xi_{n})|.
\end{align*}
Since $|\alpha'(\xi_{n})|\to \infty$, the equality above implies that $v$ is not Lipschitz, a contradiction. Similarly we get $|Du|((x-\epsilon,x))=0$ for a small enough $\epsilon>0$. Finally notice that it also holds that $|Du|(\{x\})=0$. Otherwise, again from condition \eqref{opt2}, we would have that $v(x)=-\alpha(x)$ (or $v(x)=\alpha(x)$, with a similar proof) and using also the fact that $v\ge -\alpha$, we have for $t>x$
\begin{align}
\frac{v(t)-v(x)}{t-x}&\ge \frac{-\alpha(t)+\alpha(x)}{t-x}\to +\infty \quad \text{as }t\to x+,
\end{align}
again contradicting the fact that $v$ is Lipschitz. Hence, for a small enough $\epsilon>0$ we have
\[|Du|((x-\epsilon,x+\epsilon))=|Du|((x-\epsilon,x))+|Du|(\{x\})+|Du|((x,x+\epsilon))=0.\]
\end{proof}

Observe that it is not essential to assume that  $\alpha$ is differentiable at a set of the type $(x-\delta,x)\cup (x,x+\delta)$ for small enough $\delta>0$. For example it would be enough to assume that $\alpha$ is concave at each of the intervals $(x-\delta,x)$ and $(x,x+\delta)$ and its graph does not satisfy the cone property at $x$. 

\begin{figure}[t]
\centering
\includegraphics[width=0.6\textwidth]{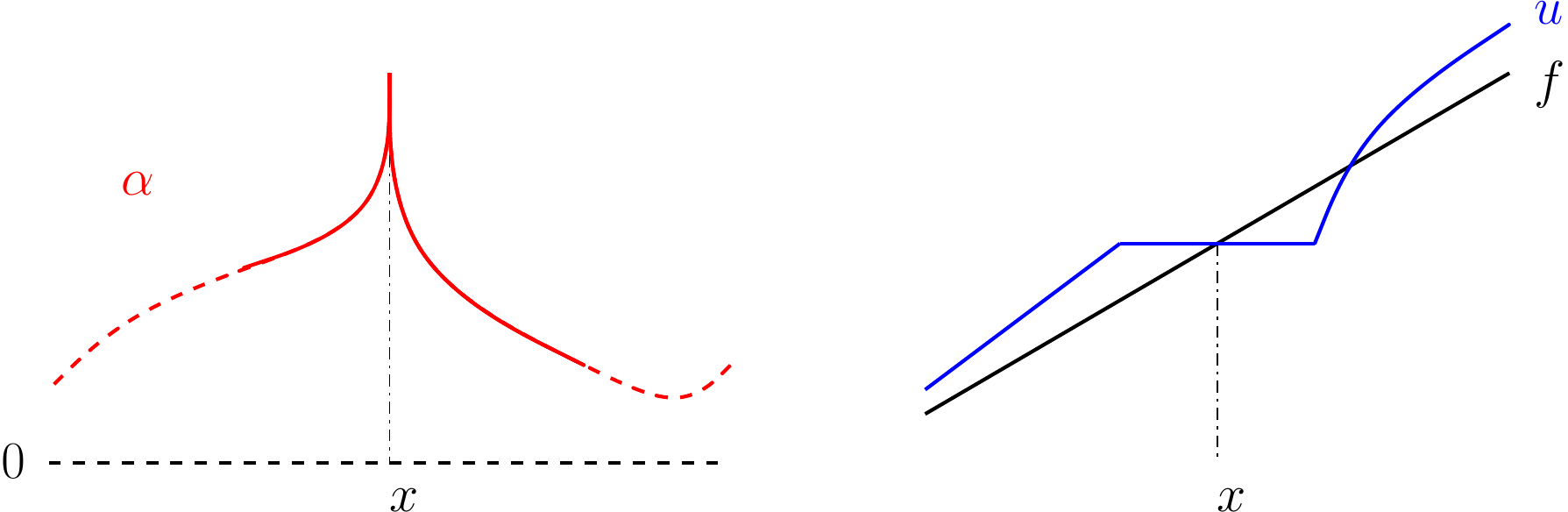}
\caption{Illustration of Proposition \ref{lbl:alpha_spike_up}: When the weight function $\alpha$ has an upward spike at a point $x$ (left plot) then the solution $u$ of \eqref{weighted_rof} is constant at an neighbourhood of $x$ (right plot).}
\label{fig:spikeup}
\end{figure}

After examining the case where $\alpha$ has an upward spike, it is natural  to ask what happens if $\alpha$ exhibits a downward spike. The following proposition provides some intuition.

\newtheorem{alpha_spike_down}[alpha_sgn]{Proposition}
\begin{alpha_spike_down}\label{lbl:alpha_spike_down}
Let $f\in\bv(\om)$ such that $f$ is continuous and strictly increasing. Suppose that $\alpha\in C(\overline{\om})$
is differentiable everywhere in $\om$ apart from a point $x$ and
\[\lim_{t\to x-} \alpha'(t)=-\infty\quad \text{and}\quad \lim_{t\to x+}\alpha'(t)=+\infty\]
with $\alpha$ attaining its minimum at $x$.  Then, if $u$ is the solution of \eqref{weighted_rof} for the weight function $\alpha$ and data $f$,  it has either a jump discontinuity at $x$ or it is constant up to the boundary of $\om$.
\end{alpha_spike_down}
\begin{proof}
Similarly to the proof of Proposition \ref{lbl:alpha_spike_up} we can deduce that there exists an $\epsilon>0$ such that $|Du|((x-\epsilon,x)\cup(x,x+\epsilon))=0$, i.e., $u$ will be constant in each of the intervals $(x-\epsilon,x)$, $(x,x+\epsilon)$. Suppose now that $u$ does not have a jump discontinuity at $x$, i.e., $Du(\{x\})=0$, and thus $u$ is constant in $(x-\epsilon, x+\epsilon)$, say equal to $c$. \\[3pt]
\emph{Case 1}:  $u(x)<f(x)$.\\
In this case we claim that $u$ is constant, equal to $c$, in $[x-\epsilon,b)$. Suppose this is not true. Then note first that since $f$ is strictly increasing, it is easily checked that $u$ will be increasing as well. Recall from Proposition \ref{lbl:jump_set_incl}, that $u$ will be continuous on $(x,b)$ since $\alpha$ is differentiable there. Now choose  $t_{0}\in [x+\epsilon,b)$ such that
\[d:=Du(x,t_{0})\le \frac{f(x)-u(x)}{2},\]
with $d$ being strictly positive. Notice that this can be done since $u$ is increasing in $[x+\epsilon,b)$ and not just equal to a constant.
 Define $\tilde{u}$ to be the following function:
\[
\tilde{u}(t)=
\begin{cases}
u(t),\;\;&t\in (a,x),\\
u(x)+d, & t\in [x,t_{0}),\\
u(t), & t\in [t_{0},b),
\end{cases}
\]
see also Figure \ref{fig:tilde_u} for an illustration.
In other words, $\tilde{u}$ has all the variation of $u$ in $(x+\epsilon,t_{0})$ concentrated in $x$. Note that 
\[\int_{\om}(f-\tilde{u})^{2}dx<\int_{\om}(f-u)^{2}dx,\]
and since $\alpha$ has a minimum at $x$ we also have 
\[\int_{\om}\alpha (x)d|D\tilde{u}|\le\int_{\om}\alpha (x)d|Du|,\]
hence $u$ is not optimal which is a contradiction.
\begin{figure}
\centering
\includegraphics[width=0.7\textwidth]{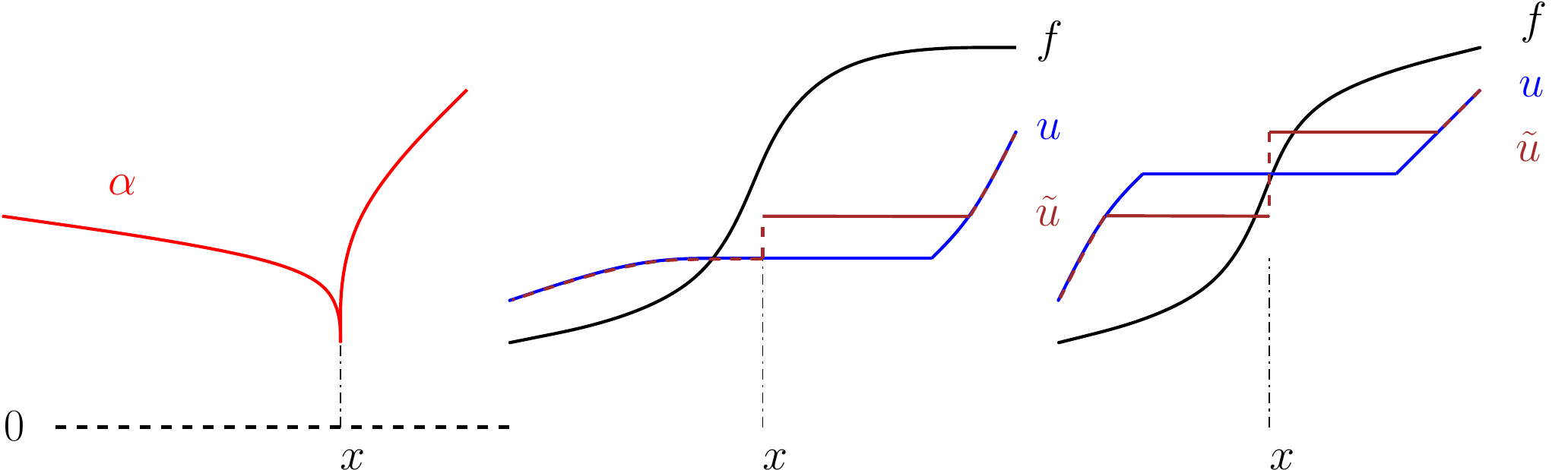}
\caption{The function $\tilde{u}$ from the proof of Proposition \ref{lbl:alpha_spike_down}. Shifting the variation from areas which is costly into a single point where it is less costly. This favours the creation of a new discontinuity point.}
\label{fig:tilde_u}
\end{figure}\\[3pt]
\emph{Case 2}:  $u(x)>f(x)$.\\
This case is treated similarly to \emph{Case 1}. If $u$ does not have a jump discontinuity at $x$, then by similar arguments we  conclude that $u$ will be constant on an interval of the type $(a,x+\epsilon)$.\\[3pt]
\emph{Case 3}:  $u(x)=f(x)$.\\
The arguments are similar to the previous cases; see also the third graph in Figure \ref{fig:tilde_u}. We just have to make sure that by choosing a small enough jump at $x$ for the function $\tilde{u}$ we can achieve a better $L^{2}$ distance from $f$. This can be done, for instance, by choosing $d$ smaller than $f(x+\frac{\epsilon}{3})-u(x+\frac{\epsilon}{3})$.

\end{proof}

\newtheorem{remark_Du_pos}[alpha_sgn]{Remark}
\begin{remark_Du_pos}
Note that for the type of data (increasing) of the Proposition \ref{lbl:alpha_spike_down}  the potential jump discontinuity  at $x$ can only be positive, i.e., $Du(\{x\})>0$. Indeed, it can be easily checked that if $Du(\{x\})<0$ then the function $u$ would be not optimal. 
 \end{remark_Du_pos}
 
Summarising the findings so far, we can say  that whenever the weight function $\alpha$ has a \emph{spike} at a point $x$, no matter whether this \emph{spike} is upward or downward,  the solution will always be constant at  each one of the intervals $(x-\epsilon,x)$ and $(x,x+\epsilon)$ for a small enough $\epsilon>0$. If the spike is upward, then the solution $u$ will  be constant in the whole interval $(x-\epsilon,x+\epsilon)$. If the spike is downward then the solution $u$ will be either constant in $(x-\epsilon,x+\epsilon)$ or piecewise constant with a jump discontinuity at $x$. 
In order to be convinced that the second alternative can indeed occur, think of the following corollary of Proposition \ref{lbl:alpha_spike_down}. Suppose that $f$ is a strictly increasing, continuous function with a graph which is symmetric with respect to $(\frac{b-a}{2},f(\frac{b-a}{2}))$ and $\alpha$ is a similarly symmetric function with a downward spike at $\frac{b-a}{2}$, e.g., 
$\alpha(x)=\sqrt{\left |x-\frac{b-a}{2} \right |}$.
 Then unless $u$ is a constant function, it will always have a jump discontinuity at $x=\frac{b-a}{2}$.  

In fact new discontinuities can be created even with more regular weight function, i.e., when $\alpha'\in\bv(\om)$. While we will come back to this with specific examples in Section \ref{sec:exact}, the following proposition provides conditions on when this can indeed occur and it establishes a connection between the jump size of $\alpha'$ and the jump size of $u$. Note that for such a function $\alpha$ we have
\begin{equation}\label{alpha_BV2}
\{x\in\om:\; \alpha \text{ is not differentiable at }x\}=J_{\alpha'}.
\end{equation}

\newtheorem{lipschitz_alpha}[alpha_sgn]{Proposition}
\begin{lipschitz_alpha}\label{lbl:lipschitz_alpha}
Let $f\in\bv(\om)$ with $f$ being continuous at a point $x\in\Omega$. Let $\alpha\in C(\overline{\om})$ be a weight function with $\alpha'\in\bv(\om)$ such that $|D\alpha'|(\{x\})>0$. Let $u$ solve \eqref{weighted_rof} with data $f$ and weight function $\alpha$. Then the following hold true:
\begin{enumerate}
\item If $D\alpha'(\{x\})<0$, then $|Du|((x-\epsilon,x+\epsilon))=0$, for a small enough $\epsilon>0$.
\item If $D\alpha'(\{x\})>0$, then $u$ has potentially a jump discontinuity at $x$ with
\begin{equation}\label{jump_u_less_jump_agrad}
|Du|(\{x\})\le D\alpha'(\{x\}).
\end{equation}
 In the particular case where there exists an $\epsilon>0$ such that $(x-\epsilon,x+\epsilon)\subseteq \mathrm{supp}(|Du|)$ then $u$ has a jump discontinuity at $x$ and 
\begin{equation}\label{jump_u_equal_jump_agrad}
|Du|(\{x\})= D\alpha'(\{x\}).
\end{equation}
\end{enumerate}
\end{lipschitz_alpha}

\begin{proof}
$(i)$ We start with the first case. We show first that $|Du|(\{x\})=0$. Suppose towards contradiction that $|Du|(\{x\})>0$  and assume without loss of generality that $Du(\{x\})>0$. We claim that $Dv'(\{x\})>0$.
Indeed we have 
\begin{align*}
v(t)&=v(x)+\int_{x}^{t}v'(s)ds,\quad -\alpha(t)=-\alpha(x)-\int_{x}^{t}\alpha'(s)ds,\quad \text{for all }x\le t,\\
v(t)&=v(x)-\int_{t}^{x}v'(s)ds,\quad -\alpha(t)=-\alpha(x)+\int_{t}^{x}\alpha'(s)ds,\quad \text{for all }t\le x.
\end{align*}
From condition \eqref{opt2} we have that $v(x)=-\alpha(x)$ and also $v(t)\ge -\alpha(t)$ for every $t\in\om$. Thus we can write 
\begin{align}
\int_{x}^{t}v'(s)ds&\ge -\int_{x}^{t}\alpha'(s)ds,\quad \text{for all }x\le t,\label{va1}\\
-\int_{t}^{x}v'(s)ds&\ge \int_{t}^{x}\alpha'(s)ds,\quad \text{for all }t\le x.\label{va2}
\end{align}
Since $-D\alpha'(\{x\})>0$ there exist a small enough $\epsilon>0$ and two constants $m<M$ such that
\begin{equation}\label{jump_minus_alpha}
\esssup_{s\in (x-\epsilon,x)} -\alpha'(s)<m<M<\essinf_{s\in(x,x+\epsilon)} -\alpha'(s).
\end{equation}
In combination with \eqref{va1}--\eqref{va2}, this implies that for all $\delta<\epsilon$
\begin{equation}\label{v_left_right}
\frac{1}{\delta}\int_{x-\delta}^{x}v'(s)ds<m<M<\frac{1}{\delta}\int_{x}^{x+\delta}v'(s)ds.
\end{equation}
Taking the limit in \eqref{v_left_right} as $\delta\to 0$ we get
\[(v')^{l}(x)\le m<M \le (v')^{r}(x).\]
This implies that $Dv'(\{x\})> 0$. However from condition \eqref{opt1} we have that
\[Df(\{x\})=Du(\{x\})+Dv'(\{x\})>0,\]
which contradicts the continuity of $f$ at $x$ and hence $|Du|(\{x\})=0$. We now claim that not only $|Du|(\{x\})=0$ but there exists a small enough $\epsilon>0$ such that $|Du|((x-\epsilon,x+\epsilon))=0$. If that was not the case, using condition \eqref{opt2}, we can find a sequence  $(t_{n})_{n\in\NN}$ with $t_{n}\to x$ such that $|v(t_{n})|=\alpha(t_{n})$. Without loss of generality, we can assume that $v(t_{n})=-\alpha(t_{n})$ for all $n\in\NN$. The proof is similar if we assume $v(t_{n})=\alpha(t_{n})$. 
From the continuity of $v$ and $\alpha$, this implies that $v(x)=-\alpha(x)$. Then by simply following again the steps above, we can derive again $Dv'(x)>0$ and 
\[Df(\{x\})=Dv'(\{x\})>0,\] 
which contradicts again the continuity of $f$ at $x$.\\
$(ii)$ Suppose now that $D\alpha' (\{x\})>0$. If $u$ does not have a jump discontinuity at $x$ then \eqref{jump_u_less_jump_agrad} holds trivially. Thus assume that $Du(\{x\})>0$. Working similarly to case $(i)$, we arrive again at \eqref{va1}--\eqref{va2}. Notice also that since we assumed that $Du(\{x\})>0$ we have from \eqref{opt1}
\[D(-v')(\{x\})=Du(\{x\})>0,\]
which means that $(-v')^{l}(x)<(-v')^{r}(x)$ and thus for all $\delta>0$ that are small enough we have
\begin{equation}\label{v_left_right_ii}
\frac{1}{\delta}\int_{x-\delta}^{x}-v'(s)ds<\frac{1}{\delta}\int_{x}^{x+\delta}-v'(s)ds.
\end{equation}
Inequality \eqref{v_left_right_ii} together with \eqref{va1}--\eqref{va2} gives
\[\frac{1}{\delta}\int_{x-\delta}^{x}\alpha'(s)ds\le\frac{1}{\delta} \int_{x-\delta}^{x}-v'(s)ds<\frac{1}{\delta}\int_{x}^{x+\delta}-v'(s)ds\le \frac{1}{\delta}\int_{x}^{x+\delta}\alpha'(s)ds,\]
for all $\delta>0$ small enough. Taking the limit $\delta\to 0$ in the expression above we end up with
\[(\alpha')^{l}(x)\le (-v')^{l}(x)<(-v')^{r}(x)\le (\alpha')^{r}(x),\]
and thus
\[Du(\{x\})=D(-v')(\{x\})\le D\alpha' (\{x\}).\]
Assuming $Du(\{x\})<0$, by working similarly we derive
\[Du(\{x\})\ge -D\alpha'(\{x\}),\]
and thus generally \eqref{jump_u_less_jump_agrad} holds.

For the second part of $(ii)$, note first that since $(x-\epsilon,x+\epsilon)\subseteq \mathrm{supp}(|Du|)$ and $v$ is continuous, from \eqref{opt2} it follows that there exists a sufficiently small $\delta>0$ such that $v=1$ everywhere in $(x-\delta,x+\delta)$ (or $v=-1$ everywhere in $(x-\delta,x+\delta)$). 
As a result, from condition \eqref{opt2}  we get that $v=-\alpha$ or $v=\alpha$ in  $(x-\delta,x+\delta)$ and condition \eqref{opt1} imposes there
\[-\alpha'=f-u\quad \text{or}\quad \alpha'=f-u.\]
Since $|Df|(\{x\})=0$ from the above we get that
\[Du(\{x\})=D\alpha'(\{x\})\quad \text{or}\quad  Du(\{x\})=-D\alpha'(\{x\}).\]
\end{proof}

By performing similar steps to the ones in the proof of Proposition \ref{lbl:lipschitz_alpha}, the following result can be shown.

\newtheorem{lipschitz_alpha_ii}[alpha_sgn]{Corollary}
\begin{lipschitz_alpha_ii}\label{lbl:lipschitz_alpha_ii}
Suppose that $x$ is a jump discontinuity point for the data $f$ and  $D\alpha'(\{x\})>0$. Then, the following estimate holds:
\begin{equation}\label{jump_u_less_jump_agrad_jump_f}
|Du|(\{x\})\le |Df|(\{x\})+D\alpha'(\{x\}).
\end{equation}
\end{lipschitz_alpha_ii}

\begin{proof}
We briefly sketch the proof. Suppose without loss of generality that $Df(\{x\})>0$. \\
$\bullet$ If $D(-v')(\{x\})>0$, then we follow the steps of the proof above starting from \eqref{v_left_right_ii} and we derive $D(-v')(\{x\})\le D\alpha'(x)$. Then from \eqref{opt1} we get
\[Du(\{x\})=Df(\{x\})+D(-v')(\{x\})\le Df(\{x\})+D\alpha'(\{x\}).\]
$\bullet$ If $-Df(\{x\})\le D(-v')(\{x\})<0$, then obviously
\[Du(\{x\})=Df(\{x\})+D(-v')(\{x\})<Df(\{x\})\le Df(\{x\})+D\alpha'(\{x\}).\]
$\bullet$ Lastly if $D(-v')(\{x\})<-Df(\{x\})$ then it follows that $Du(\{x\})<0$. Then following exactly the steps of $(ii)$ in the proof of Proposition \ref{lbl:lipschitz_alpha} (only the signs are reversed) we end up to 
\[Dv'(\{x\})\le D\alpha' (\{x\}),\]
and thus in this case
\[0>Du(\{x\})=Df(\{x\})+D(-v')(\{x\})\ge Df(\{x\})-D\alpha'(\{x\}).\]
\end{proof}

We would like now to prove that if $\alpha$ is  differentiable at a point $x$, then $|Du|(\{x\})\le |Df|(\{x\})$. Notice that we cannot derive this straightforwardly from Corollary \ref{lbl:lipschitz_alpha_ii} as there we use the fact that $D\alpha'(\{x\})>0$. However, this can easily be shown independently as the next proposition shows. 

\newtheorem{correct_jump}[alpha_sgn]{Proposition}
\begin{correct_jump}\label{lbl:correct_jump}
Let $u$ solve the weighted $\tv$ minimisation problem with data $f$ and weight function $\alpha\in C(\overline{\om})$ with $\alpha'\in\bv(\om)$ and  $\alpha>0$. 
Then  if $|D\alpha'|(\{x\})=0$, we have
\begin{equation}\label{eq:correct_jump}
|Du|(\{x\})\le |Df|(\{x\}).
\end{equation}
Moreover, the jumps of $u$ and $f$ have the same direction.
\end{correct_jump}

\begin{proof}
If $|Df|(\{x\})=0$ we have nothing to prove since by Proposition \ref{lbl:jump_set_incl} we have that $|Du|(\{x\})=0$ as well.
Thus,  suppose that $Df(\{x\})>0$. The case $Df(\{x\})<0$ is treated similarly. We first exclude the case $Du(\{x\})<0$. Suppose towards contradiction that this holds. From the left and right continuity properties of $f$ and $u$ and \eqref{opt1} we have that there exists an $\epsilon>0$ and some real numbers $m<M$ such that
\[\esssup_{t\in (x-\epsilon,x)}v'(t)\le m<M\le \essinf_{t\in(x,x+\epsilon)} v'(t), \]
Bearing in mind that $v(x)=\alpha(x)>0$ and the fact that
\begin{align*}
v(t)&=v(x)+\int_{x}^{t}v'(s)ds,\quad x\le t,\\
v(t)&=v(x)-\int_{t}^{x}v'(s)ds,\quad t\le x,
\end{align*}
together with $v<\alpha$, we deduce that 
\begin{align*}
\alpha(x)-\alpha(t)&\ge M(x-t),\quad x\le t,\\
\alpha(x)-\alpha(t)&\le m(x-t),\quad t\le x,
\end{align*}
which contradicts the fact that $\alpha$ is differentiable at $x$. Hence $Du(\{x\})>0$ and it now remains to prove \eqref{eq:correct_jump}. Notice first of all that by arguing similarly as above we can exclude the cases 
\[\underline{u}(x)<\underline{f}(x)<\overline{f}(x)<\overline{u}(x),\quad \underline{u}(x)\le\underline{f}(x)<\overline{f}(x)<\overline{u}(x)\quad \text{and} \quad \underline{u}(x)<\underline{f}(x)<\overline{f}(x)\le\overline{u}(x).\]
We thus focus on  the cases 
\begin{align}
&\underline{f}(x)<\underline{u}(x)\le \overline{f}(x)<\overline{u}(x),\label{cj_case1}\\
&\underline{f}(x)<\overline{f}(x)<\underline{u}(x)<\overline{u}(x),\label{cj_case2}\\
&\underline{u}(x)<\underline{f}(x)\le \overline{u}(x)< \overline{f}(x),\label{cj_case3}\\
&\underline{u}(x)<\overline{u}(x)<\underline{f}(x)<\overline{f}(x).\label{cj_case4}
\end{align}
and we will show that when these happen then \eqref{eq:correct_jump} must hold.

We argue for \eqref{cj_case1}  since \eqref{cj_case2}, \eqref{cj_case3} and \eqref{cj_case4} can be treated similarly.  Assume that \eqref{eq:correct_jump} does not hold. This means that 
\[\overline{f}(x)-\underline{f}(x)<\overline{u}(x)-\underline{u}(x).\]
Arguing in the same way as before, this implies that  there exists an $\epsilon>0$ and some real numbers $m<M$ such that
\[\esssup_{t\in (x,x+\epsilon)} v'(t)\le m<M\le \essinf_{t\in (x-\epsilon,x)} v'(t)<0.\]
This, together with the fact that $v(x)=-\alpha(x)$ and $v\le -\alpha$ contradicts again the differentiability of $\alpha$ at $x$.
\end{proof}

Recall that in the standard scalar $\tv$ minimisation we always have at a jump point $x$ of $u$,
\begin{equation}\label{eq:correct_jumpTV}
\underline{f}(x)\le \underline{u}(x)<\overline{u}(x)\le \overline{f}(x).
\end{equation}
Moreover, the jumps of $u$ and $f$ having the same directions, i.e.,
\[f^{l}(x)\le u^{l}(x)<u^{r}(x)\le f^{r}(x)\quad \text{or} \quad f^{r}(x)\le u^{r}(x)<u^{l}(x)\le f^{l}(x).\]

We now summarise our findings so far. Given $f\in\bv(\om)$ $\alpha\in C(\overline{\om})$ with $\alpha'\in\bv(\om)$,  we have shown analytically  the following:
\begin{enumerate}
\item If $|D\alpha'|(\{x\})=0$ and $|Df|(\{x\})=0$ then $|Du|(\{x\})=0$; see Proposition \ref{lbl:jump_set_incl} and \eqref{alpha_BV2}.
\item If $D\alpha' (\{x\})<0$ then a \emph{plateau} is created for $u$ around $x$; see Proposition \ref{lbl:lipschitz_alpha}.
\item The estimate $|Du|(\{x\})\le |Df|(\{x\})+|D\alpha'|(\{x\})$ holds  in every point $x\in\om$.
\item If $|Df|(\{x\})=0$, $D\alpha' (\{x\})>0$ and $(x-\epsilon,x+\epsilon)\subseteq \mathrm{supp}(|Du|)$, then $|Du|(\{x\})=D\alpha'(x)$; see Proposition \ref{lbl:lipschitz_alpha}.
\item If $f$ and $u$ jump at $x$ in different directions then $|Du|(\{x\})\le ||Df|(\{x\})-D\alpha'(\{x\})|$; see Corollary \ref{lbl:lipschitz_alpha_ii}.
\item If $|D\alpha'|(\{x\})=0$ and $u$ and $f$ jump at $x$, then their jumps have the same direction; see Proposition \ref{lbl:correct_jump}.
\end{enumerate}

Despite these first analytical results, several questions still need to be addressed. For instance, one wonders whether  $D\alpha'(\{x\})>0$, always creates a jump discontinuity for $u$ at $x$. Furthermore, we note that \eqref{eq:correct_jumpTV} is related to a loss of contrast in mathematical image processing. Here, one consequently is interested in understanding, whether such an effect still is possible in the weighted case, provided the weight is smooth.
Table \ref{question_table} summarises these and further questions. In Figures \ref{lbl:counterex_1}--\ref{lbl:counterex_6} we provide numerical examples for all the cases discussed in Table \ref{question_table}.

{\footnotesize
\setlength\extrarowheight{3pt}

\begin {table}
\begin{tabular}{| c | c | c | c |}
\hline
Case 			  				& What is proved analytically		& Is it possible...		& Answer/Figure\\[3pt]\hline\hline
$D\alpha'(\{x\})>0$   & 	$|Du|(\{x\})\le |Df|(\{x\})+|D\alpha'|(\{x\})$			& for $u$ to remain continuous? & Yes, Fig. \ref{lbl:counterex_1}		\\[3pt]\hline
$D\alpha'(\{x\})>0$ $\&$ $Df(\{x\})=0$  & 	$|Du|(\{x\})\le |D\alpha'|(\{x\})$	& to have ``$<$'' ?& Yes, Fig. \ref{lbl:counterex_2}		\\[3pt]\hline
$D\alpha'(\{x\})>0$ $\&$ $Df(\{x\})=0$  & 	$|Du|(\{x\})\le|D\alpha'|(\{x\})$			& to have ``$=$'' ?& Yes, Fig. \ref{lbl:counterex_3}		\\[3pt]\hline
$D\alpha'(\{x\})=0$ $\&$ $Df(\{x\})>0$  &   $|Du|(\{x\})\le |Df|(\{x\})$       & 	$f^{l}(x)<f^{r}(x)<u^{l}(x)<u^{r}(x)$ ?& Yes,  Fig. \ref{lbl:counterex_4} \\[3pt]\hline
$D\alpha'(\{x\})>0$ $\&$ $Df(\{x\})>0$  & $|Du|(\{x\})\le |Df|(\{x\})+|D\alpha'|(\{x\})$ & $u^{l}(x)< f^{l}(x)<f^{r}(x)<u^{r}(x)$ ? & Yes,  Fig. \ref{lbl:counterex_5}\\[3pt]\hline
$D\alpha'(\{x\})>0$ $\&$ $Df(\{x\})>0$  & $|Du|(\{x\})\le ||Df|(\{x\})-D\alpha'(\{x\})|$
\setcounter{footnote}{0}\footnotemark
 & $u^{l}(x)<f^{r}(x)<f^{l}(x)<u^{r}(x)$ ? & Yes, Fig. \ref{lbl:counterex_6} \\[3pt]\hline

\end{tabular}
\vspace{10pt}
\caption{Summary of the questions that are answered with numerical examples in Figures \ref{lbl:counterex_1}--\ref{lbl:counterex_6}}
\label{question_table}
\end{table}
}
\begin{figure}
	\begin{subfigure}[t]{0.48\textwidth}
		\begin{tabular}{@{}cc@{}}
			\includegraphics[width=.48\textwidth]{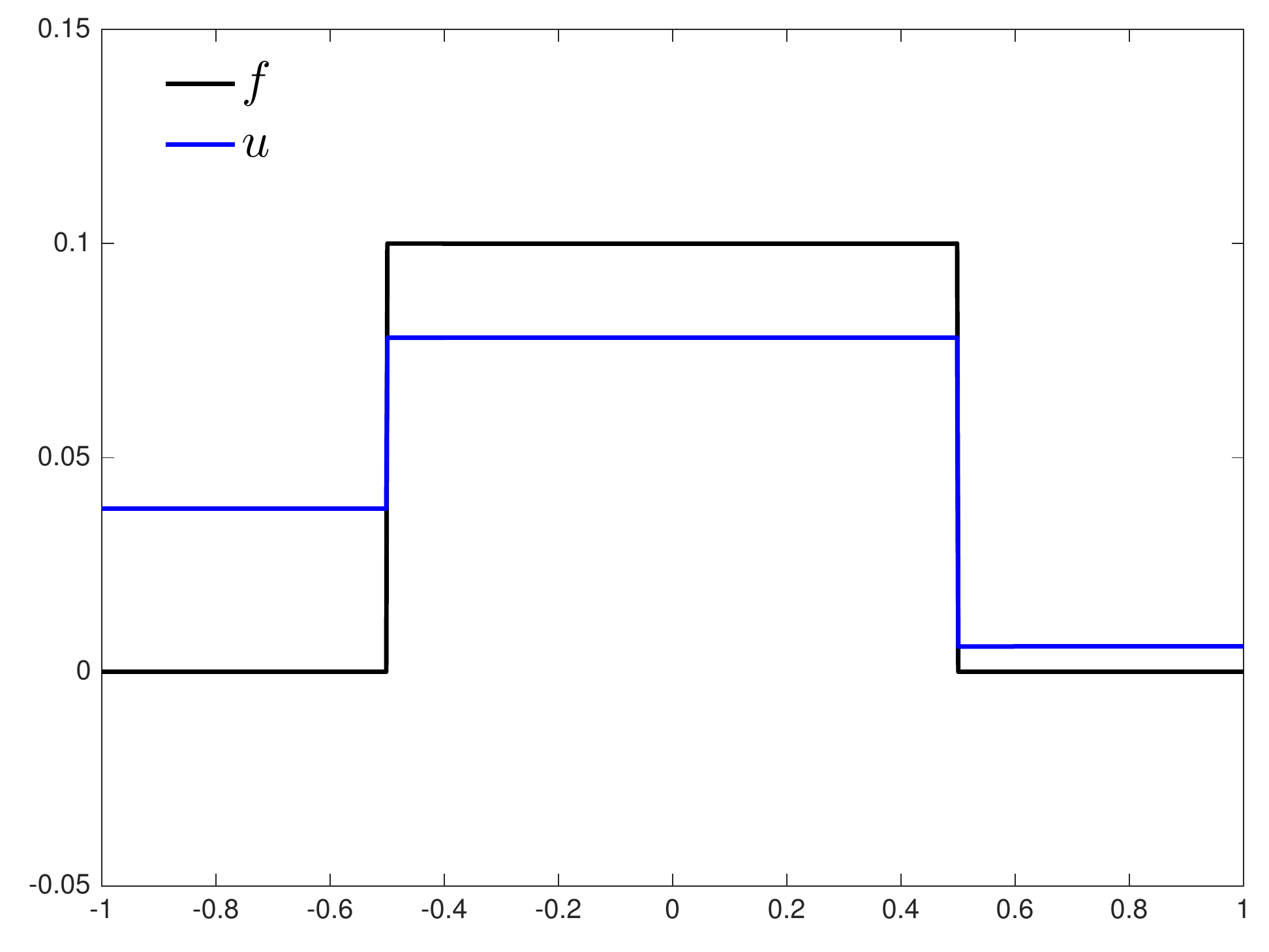} &
  			 \includegraphics[width=.48\textwidth]{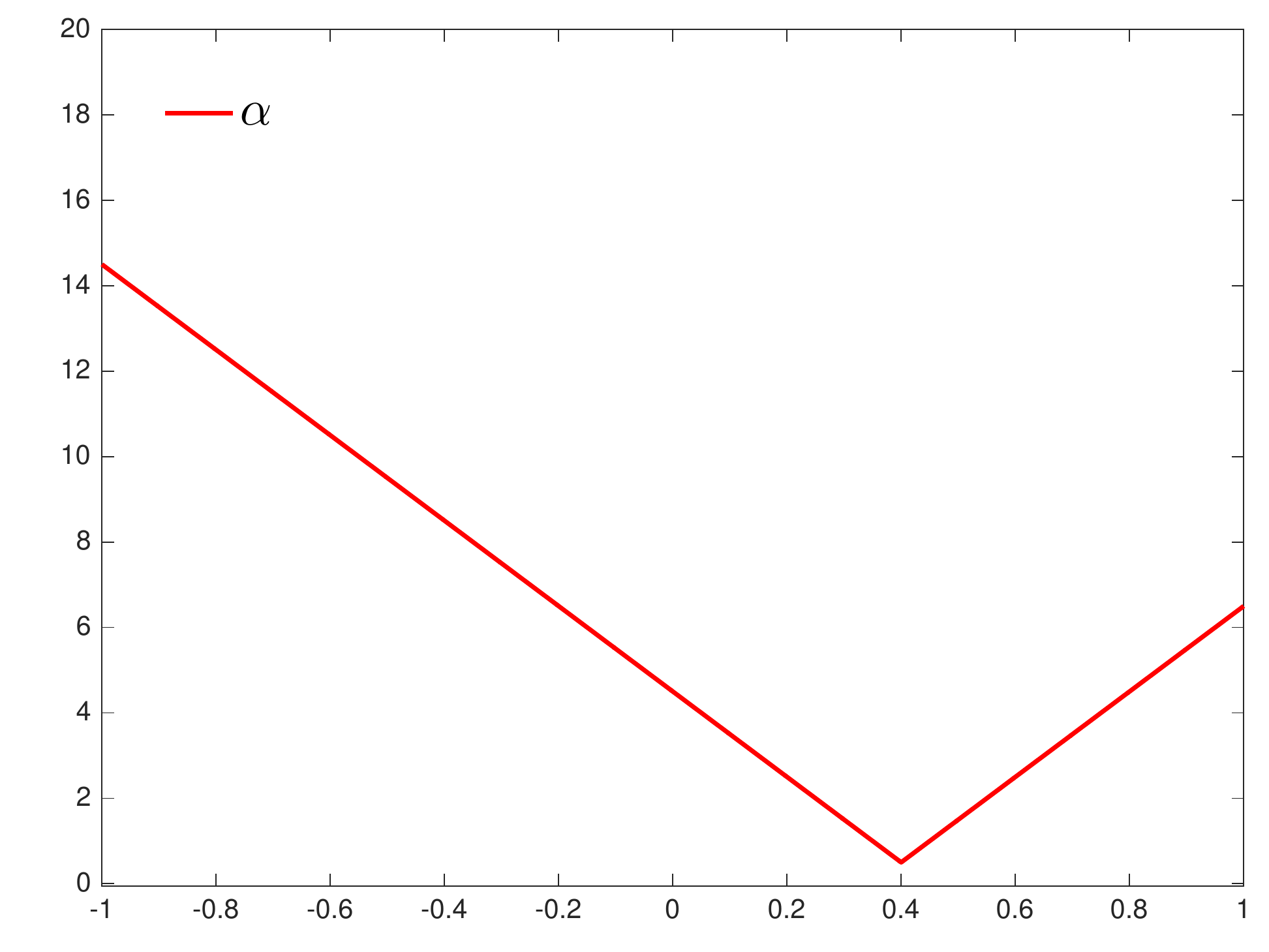} 
		\end{tabular}
		\captionsetup{width=.85\linewidth}
		\caption{Case where even though $\alpha'$ has a positive jump, the solution $u$ remains continuous at 			this point}
		\label{lbl:counterex_1}
	\end{subfigure}\hspace{0.5cm}
	\begin{subfigure}[t]{0.48\textwidth}
		\begin{tabular}{@{}cc@{}}
			\includegraphics[width=.48\textwidth]{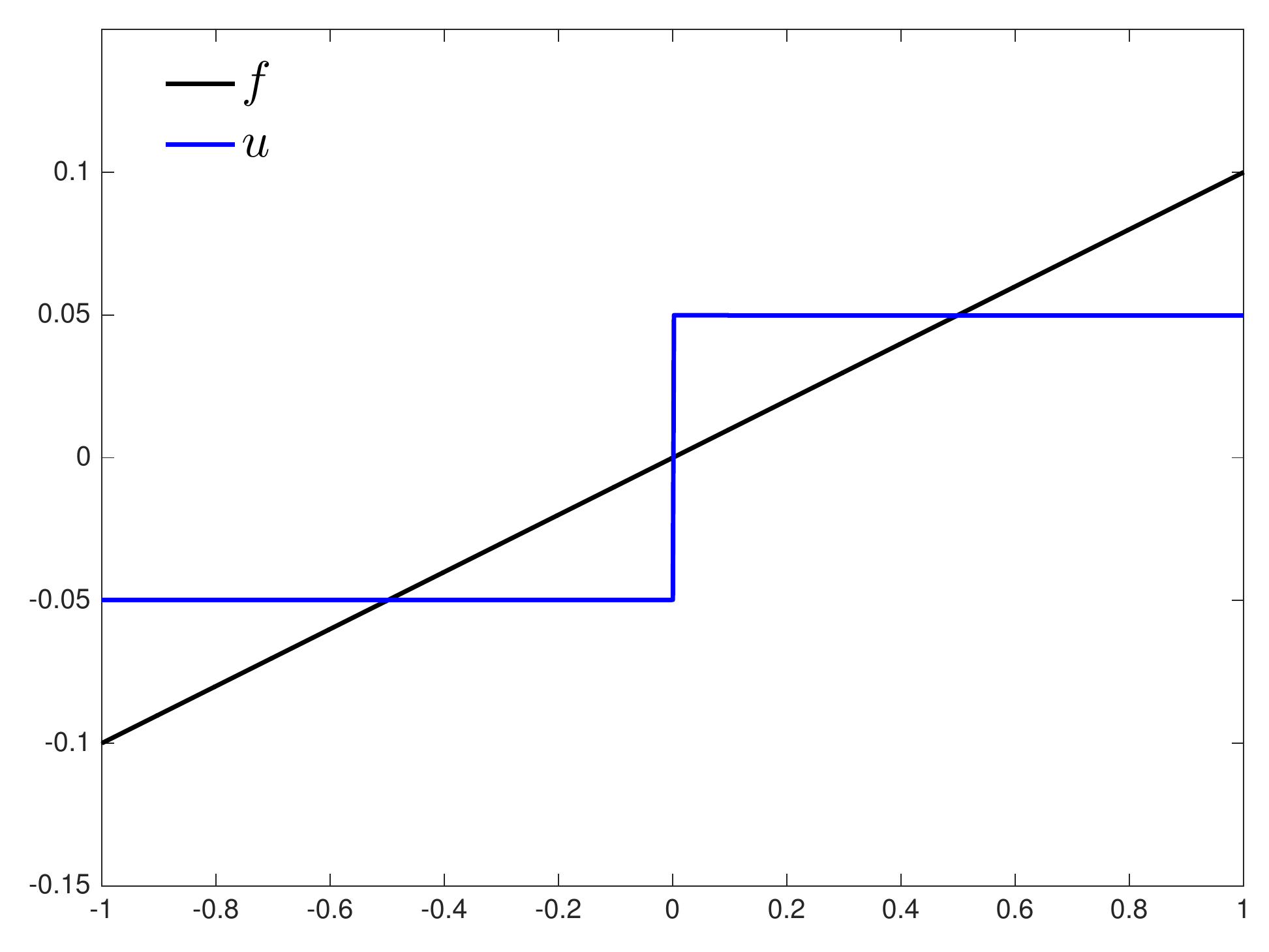} &
  			 \includegraphics[width=.48\textwidth]{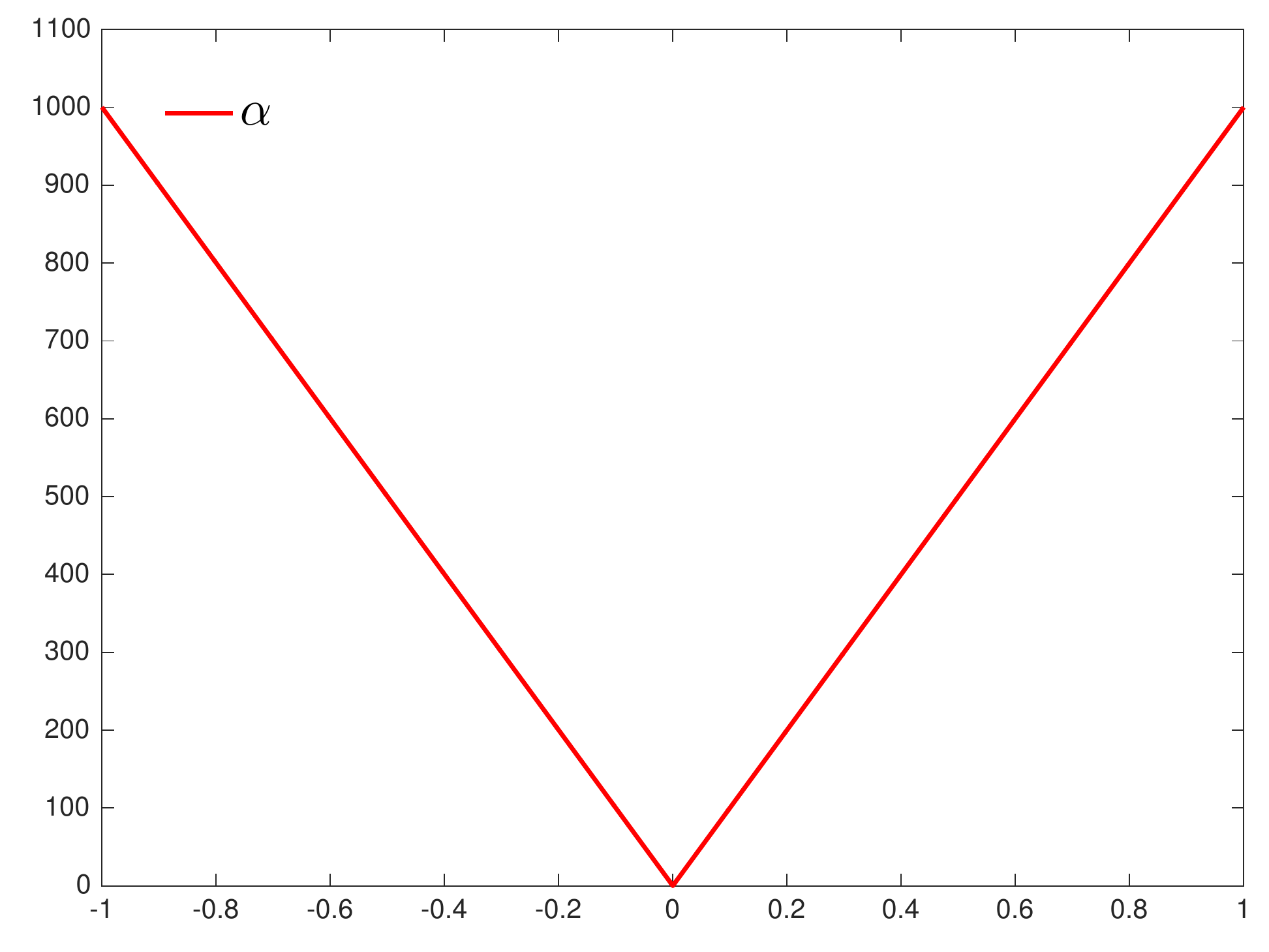} 
		\end{tabular}
		\captionsetup{width=.85\linewidth}
		\caption{Case where $|Du|(\{x\})<D\alpha'(\{x\})$. Note \\that $x=0$ is not an interior point of  $\mathrm{supp}(|Du|)$}
		\label{lbl:counterex_2}
	\end{subfigure}\vspace{0.2cm}
	
	\begin{subfigure}[t]{0.48\textwidth}
		\begin{tabular}{@{}cc@{}}
			\includegraphics[width=.48\textwidth]{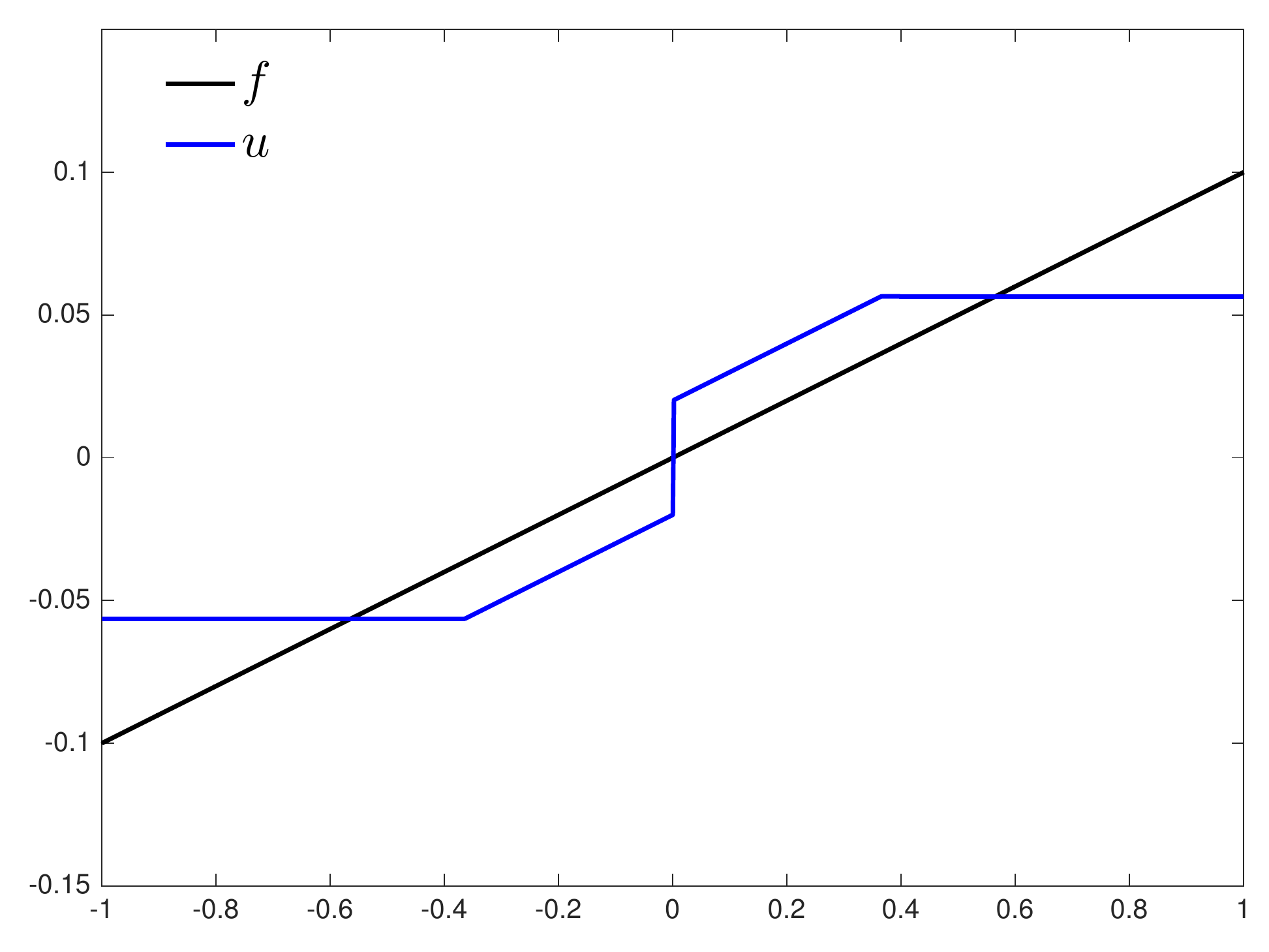} &
  			 \includegraphics[width=.48\textwidth]{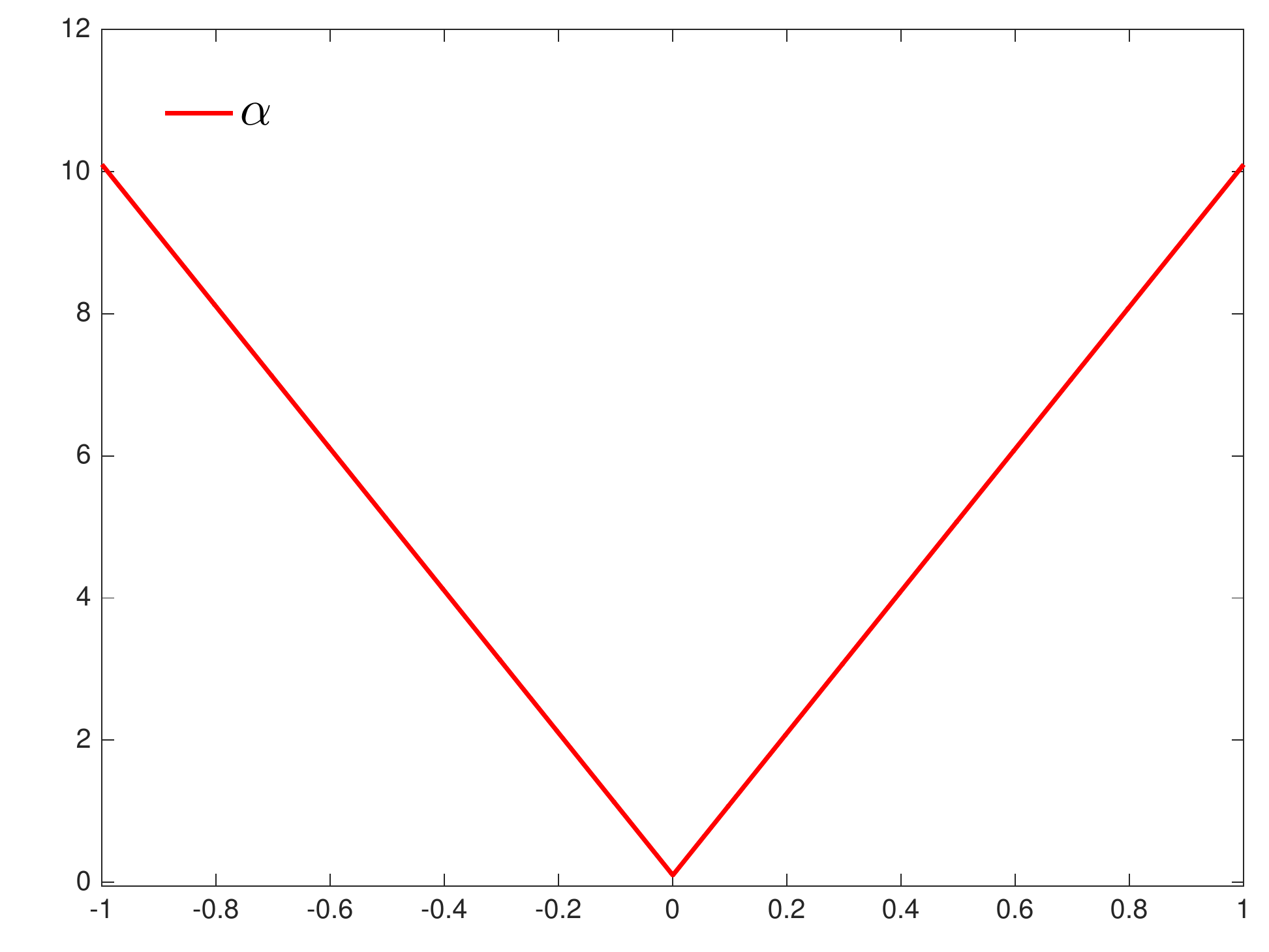} 
		\end{tabular}
		\captionsetup{width=.85\linewidth}
		\caption{Case where $|Du|(\{x\})=D\alpha'(\{x\})$. Note that this is predicted by Proposition 					\ref{lbl:lipschitz_alpha}, since $(-\epsilon,\epsilon)\subseteq\mathrm{supp}(|Du|)$}
		\label{lbl:counterex_3}
	\end{subfigure}\hspace{0.5cm}
	\begin{subfigure}[t]{0.48\textwidth}
		\begin{tabular}{@{}cc@{}}
			\includegraphics[width=.48\textwidth]{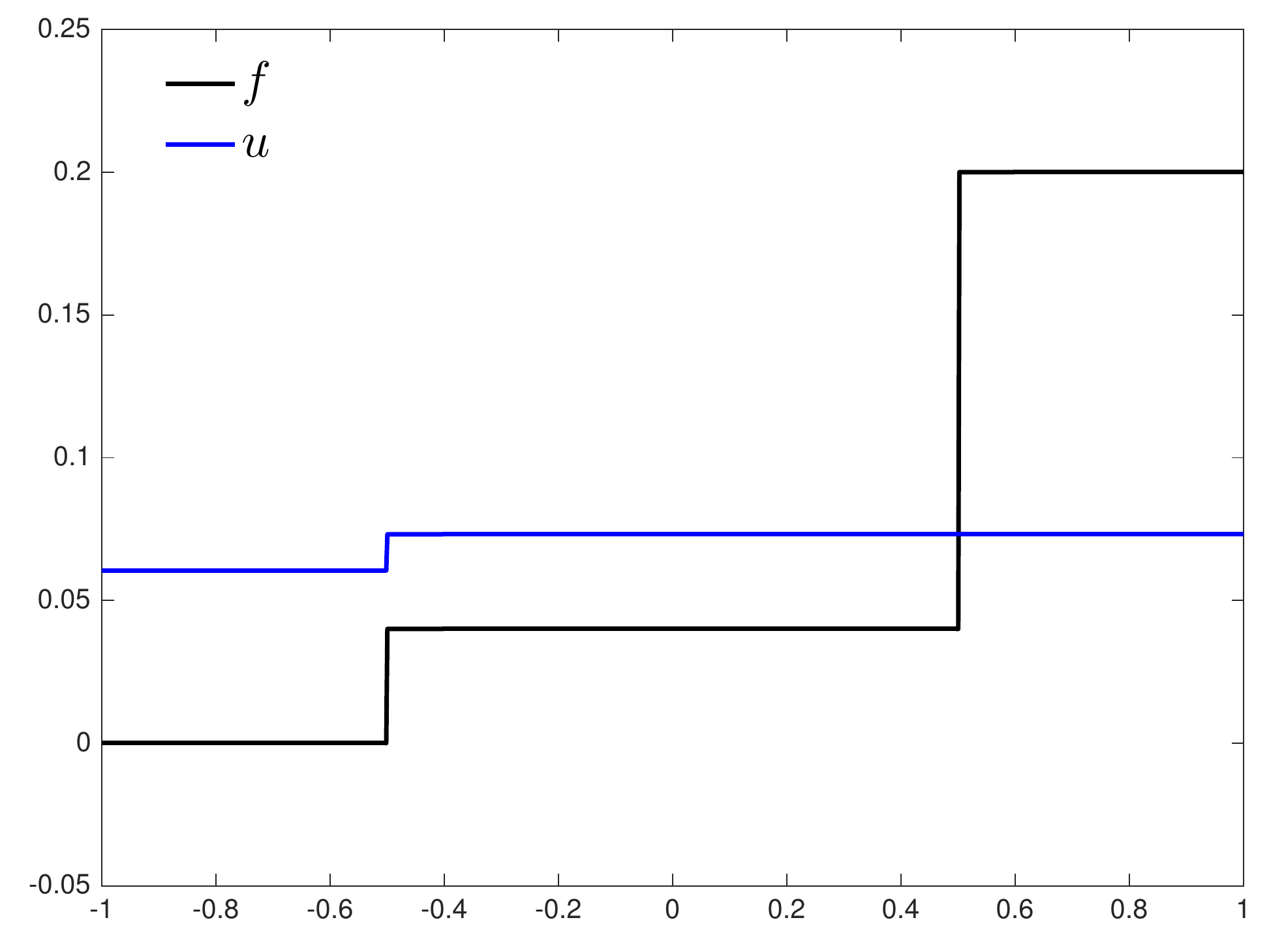} &
  			 \includegraphics[width=.48\textwidth]{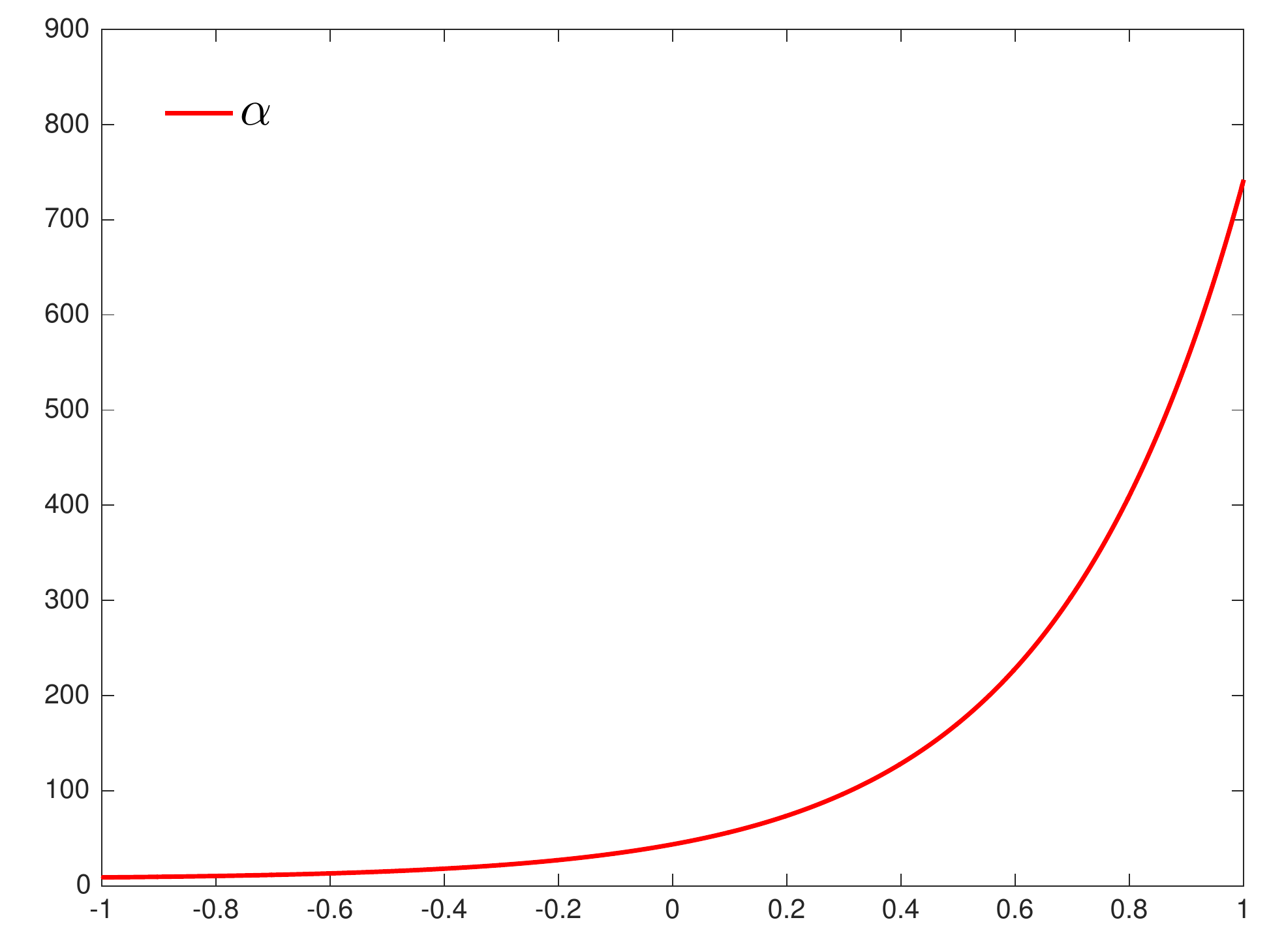} 
		\end{tabular}
		\captionsetup{width=.85\linewidth}
		\caption{Case where even though the weight function $\alpha$ is smooth, the jump of $u$ is above the 		jump of $f$, i.e., $f^{l}(x)<f^{r}(x)<u^{l}(x)<u^{r}(x)$}
		\label{lbl:counterex_4}
	\end{subfigure}\vspace{0.2cm}
	
	\begin{subfigure}[t]{0.48\textwidth}
		\begin{tabular}{@{}cc@{}}
			\includegraphics[width=.48\textwidth]{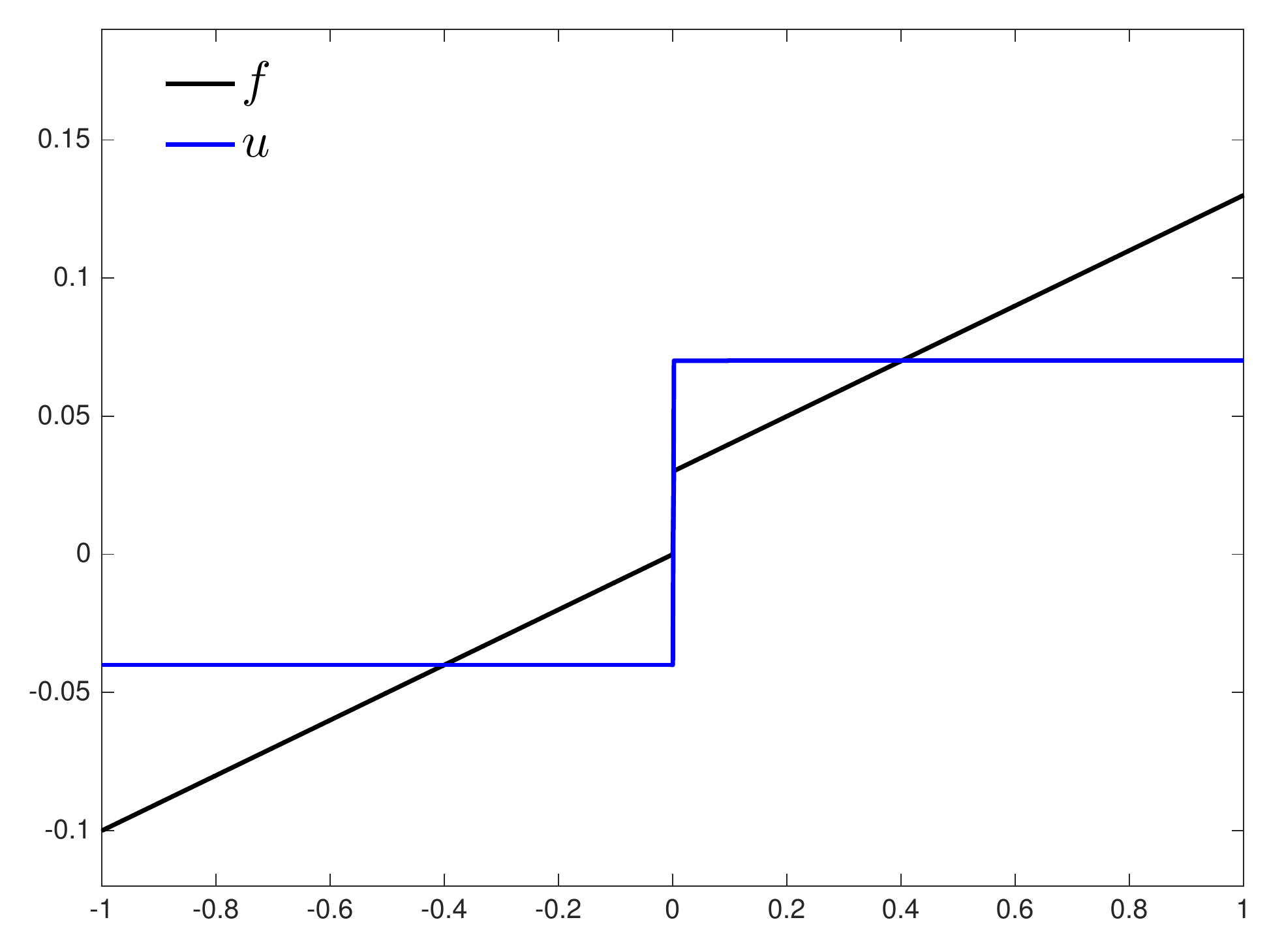} &
  			 \includegraphics[width=.48\textwidth]{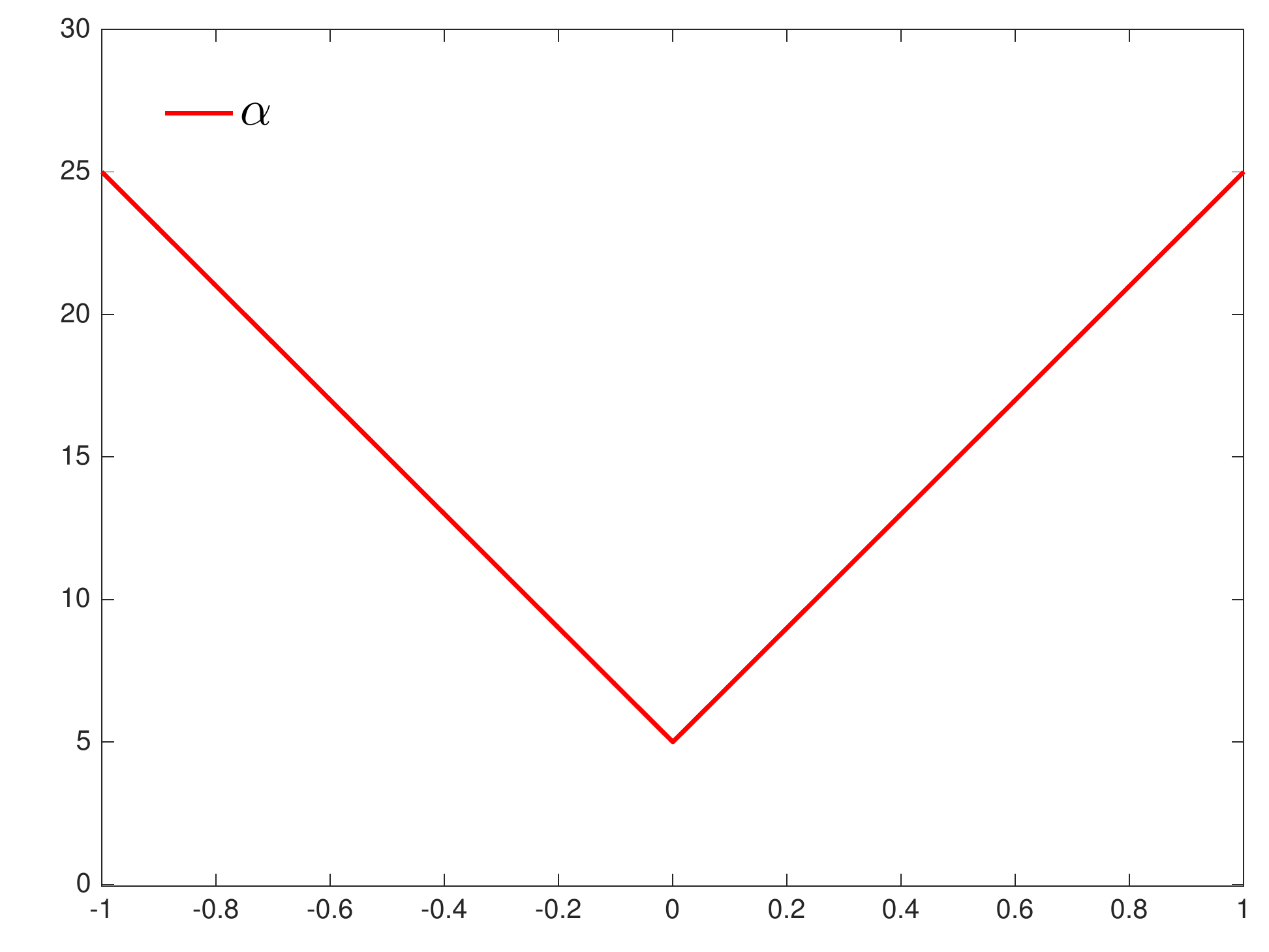} 
		\end{tabular}
		\captionsetup{width=.85\linewidth}
		\caption{Case where the jump of $u$ is larger than the jump of $f$ and has the same direction, i.e., $u^{l}(x)< f^{l}(x)<f^{r}(x)<u^{r}(x)$}
		\label{lbl:counterex_5}
	\end{subfigure}\hspace{0.5cm}
		\begin{subfigure}[t]{0.48\textwidth}
		\begin{tabular}{@{}cc@{}}
			\includegraphics[width=.48\textwidth]{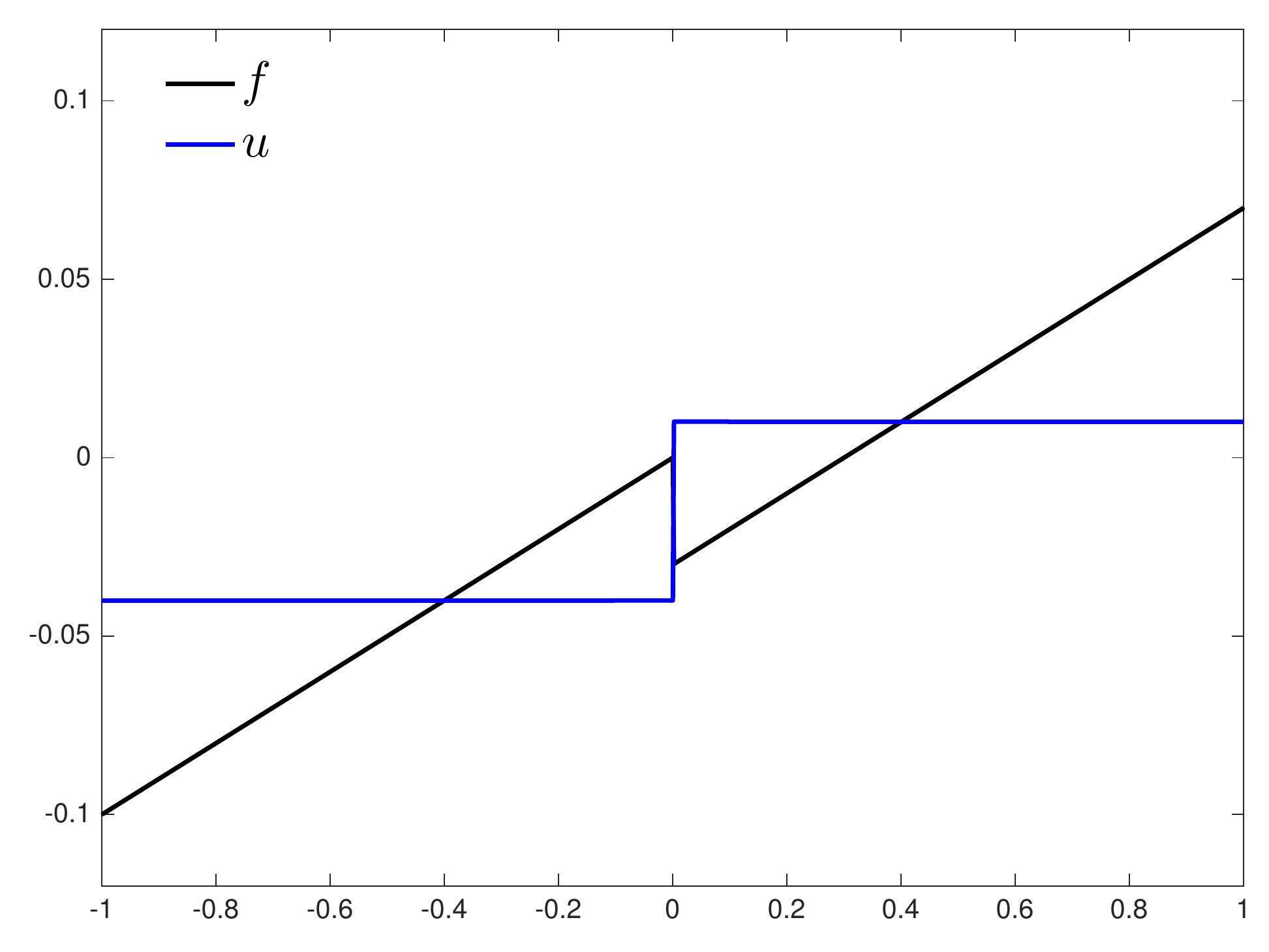} &
  			 \includegraphics[width=.48\textwidth]{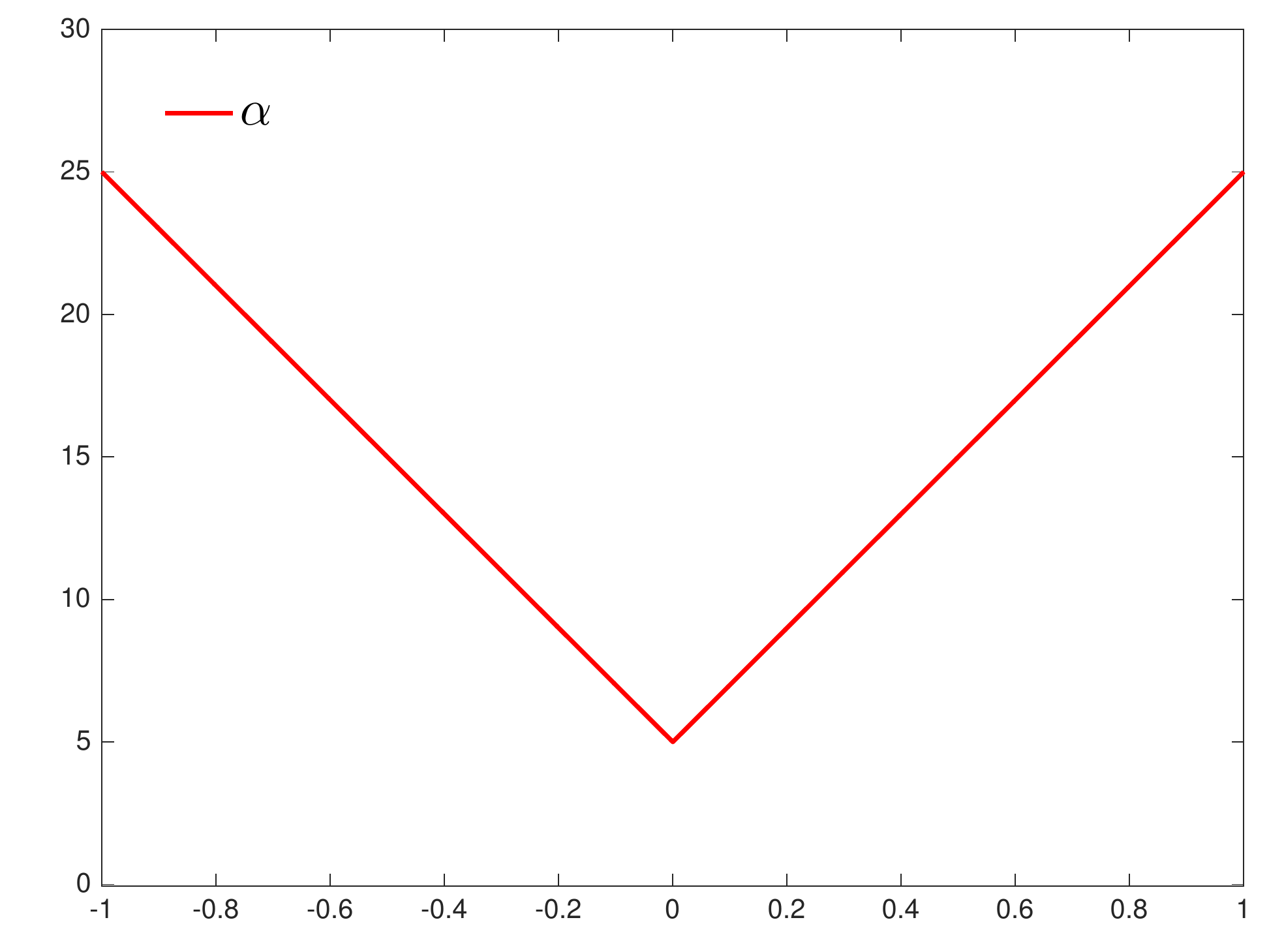} 
		\end{tabular}
		\captionsetup{width=.85\linewidth}
		\caption{Case where the jump of $u$ is larger than the jump of $f$ and has the opposite direction, i.e., $u^{l}(x)<f^{r}(x)<f^{l}(x)<u^{r}(x)$}
		\label{lbl:counterex_6}
	\end{subfigure}
\caption{Series of numerical examples that confirm that certain cases, as these are described in Table \ref{question_table}, can indeed occur using weighted $\tv$ regularisation}	
\label{fig:counterexamples}
\end{figure}

In Figure \ref{lbl:counterex_1} we have an example where both the data $f$ and the solution $u$ are continuous at a point $x$ despite the fact that $D\alpha'(\{x\})>0$. Note also that a trivial example here would also be the case where $\alpha$ is so large that $u$ would be a constant.

In Figure \ref{lbl:counterex_2} we depict an example where a new discontinuity is created for the solution $u$ at the point $x=0$ where the data function $f$ is continuous. Note that in this specific example, the creation of this discontinuity is guaranteed to happen since $f$ is continuous and strictly increasing. This was shown in Proposition \ref{lbl:alpha_spike_down} for data functions with a downward spike, but it can be easily extended  to an absolute value-type function as we have here.  Note that the estimate $|Du|(\{x\})\le D\alpha'(\{x\})$ holds here with strict inequality. Observe that the jump of $\alpha'$ is very large at the point $0$ in contrast to the jump of $u$ there. In fact there is a further upper bound for $Du(\{x\})$ which is independent of $\alpha$; see Theorem \ref{lbl:TVu_less_TVf} of Section \ref{sec:boundTV} and Theorem \ref{lbl:TVu_less_TVf_2} of Section \ref{sec:alpha_zero}. As we mention in the introduction, in these theorems it is shown that $|Du|(\om)\le |Df|(\om)$. \footnotetext{ when the jumps of $u$ and $f$ have opposite directions}
  
On the contrary, in Figure \ref{lbl:counterex_3} we have an example where the estimate $|Du|(\{x\})\le D\alpha'(\{x\})$ holds with an equality. We use the same $f$ as in Figure \ref{lbl:counterex_3} and a similar weight function $\alpha$ with a small jump of $\alpha'$ at $x=0$. Note that here it holds that $(-\epsilon,\epsilon)\subseteq\mathrm{supp}(|Du|)$ for some small $\epsilon>0$ and, as Proposition \ref{lbl:lipschitz_alpha}  predicts, we have $|Du|(\{x\})= D\alpha'(\{x\})$.

In Figure \ref{lbl:counterex_4} we encounter another, perhaps unexpected situation. Even though we are using a smooth weight function $\alpha$ and, as Proposition \ref{lbl:jump_set_incl} states, the jumps of $u$ should occur at the same points where $f$ has jumps,  condition \eqref{eq:correct_jumpTV} is violated. Indeed, here we have that the whole jump of $u$ is above that of $f$, i.e., $f^{l}(x)<f^{r}(x)<u^{l}(x)<u^{r}(x)$. Nevertheless, there still holds $|Du|(\{x\})\le |Df|(\{x\})$ in accordance with Proposition \ref{lbl:correct_jump}.

In Figure \ref{lbl:counterex_5} we have an example where the size of the jump of $u$ is larger than the one of $f$ since we have that $\alpha'$ also jumps there. Note that in this example the jump directions are the same.

On the other hand, in Figure \ref{lbl:counterex_6} we have a similar situation, i.e., the jump of $u$ is larger than the one of $f$, but also the orientation of the jumps is different as $Du(\{0\})>0$, while  $Df(\{0\})<0$.

Let us note here that situations where condition \eqref{eq:correct_jumpTV} is violated, like the examples in Figures \ref{lbl:counterex_4} and \ref{lbl:counterex_6} in which the jump is moving upwards or changing orientations,  shows that the weighted $\tv$ has fundamentally different regularising properties when compared  to the scalar $\tv$. Moreover, this is also true in comparison with other popular regularisers like Huber-$\tv$ or $\tgv$. Indeed, for (the scalar parameter versions of) one dimensional Huber-$\tv$ and $\tgv$, condition \eqref{eq:correct_jumpTV} always holds \cite{BrediesL1, Papafitsoros_Bredies, journal_tvlp}.

Finally we note that even though in Figure \ref{fig:counterexamples} we present only numerical results, one can easily calculate analytically the explicit forms of the solutions depicted there. Indeed, in Section \ref{sec:exact} we do that exhaustively for the examples in Figures \ref{lbl:counterex_2}--\ref{lbl:counterex_3}.

Before we finish this section, we mention two last results. The first one says that the solution $u$ is constant in areas where the weight function $\alpha$ has  high gradient. This is essentially a corollary of the fact that the solution $u$ is bounded by the data $f$ with respect to the uniform norm.

\newtheorem{large_gradient_plateau}[alpha_sgn]{Proposition}
\begin{large_gradient_plateau}\label{lbl:large_gradient_plateau}
Let  $f\in\bv(\om)$ and $\alpha$ be a weight function, differentiable in an open interval $I\subseteq \Omega$, such that 
\begin{equation}\label{bound_weight_gradient}
|\alpha'(x)|>2\|f\|_{\infty},\quad \text{for all }x\in I.
\end{equation}
Let  $u$ be the solution of \eqref{weighted_rof} with data $f$ and weight $\alpha$. Then $u$ is constant in $I$.
\end{large_gradient_plateau}

\begin{proof}
One can easily observe that for the solution $u$ the following maximum principle holds true:
\begin{equation}\label{maximum_principle}
\essinf_{x\in\om} f(x)\le \essinf_{x\in\om} u(x) \le \esssup_{x\in\om} u(x) \le \esssup_{x\in\om} f(x).
\end{equation}
This stems from the fact that the function 
\[u_{\inf f}^{\sup f}(x):=\max(\min(\esssup_{t\in\om} f(t), \tilde{u}(x)), \essinf_{t\in\om} f(t)),\]
has always equal or less energy than $u$, where here $\tilde{u}$ is any good representative of $u$. Using a simple translation argument, we can assume that 
\begin{align*}
\esssup_{x\in\om} f(x)&=\|f\|_{\infty},\\
\essinf_{x\in\om} f(x)&=-\|f\|_{\infty}.
\end{align*}
This together with \eqref{maximum_principle} implies that $\|f-u\|_{\infty}\le 2\|f\|_{\infty}$ and, thus, for the dual variable $v$ it must hold 
\begin{equation}\label{bound_v_prime}
\|v'\|_{\infty}\le 2\|f\|_{\infty}.
\end{equation}
Suppose now  that $u$ is not constant in $I$. Then from condition \eqref{opt2} we have that there exists a point $x\in I$ such that $|v(x)|=\alpha(x)$ and, without loss of generality, we assume that $v(x)=\alpha(x)$. We can also assume without loss of generality that $\alpha'<-2\|f\|_{\infty}$ in $I$. Using the  fundamental theorem of calculus, the mean value theorem, the fact that $v(x)=\alpha(x)$ and $v\le\alpha$, we get for every $t>x$ with $t\in I$
\begin{equation}\label{bound_v_prime_average}
\frac{1}{t-x}\int_{x}^{t}v'(s)ds\le \frac{\alpha(x)-\alpha(t)}{x-t}<-2\|f\|_{\infty}.
\end{equation}
We claim now that for every $\epsilon>0$ small enough there exists a set $A\subseteq (x,x+\epsilon)$ of positive Lebesgue
 measure such that $v'<-2\|f\|_{\infty}$ in $A$, which of course contradicts \eqref{bound_v_prime}. Indeed if there was a small enough $\epsilon>0$ such that $v'\ge -2\|f\|_{\infty}$ almost everywhere on $(x,x+\epsilon)\subseteq I$ then we would get for every $x<t<x+\epsilon$
\[-2\|f\|_{\infty}=\frac{1}{t-x}\int_{x}^{t}-2\|f\|_{\infty}\,ds\le \frac{1}{t-x}\int_{x}^{t}v'(s)ds,\]
which contradicts \eqref{bound_v_prime_average}.
\end{proof}

The last result we state is that, as in the scalar case, solutions of the weighted $\mathrm{TV}$ minimisation are constant near the boundary of $\om$. The proof is very simple and stems from the facts that $\alpha$ is bounded away from zero as well as $v\in H_{0}^{1}(\om)$.

\newtheorem{boundary_constant}[alpha_sgn]{Proposition}
\begin{boundary_constant}\label{lbl:boundary_constant}
Let $\om=(a,b)$, $f\in\bv(\om)$ and $\alpha\in C(\overline{\om})$ with $\alpha>0$. Then there exist $a<x_{1}<x_{2}<b$ such that the solution $u$ of \eqref{weighted_rof} with data $f$ and weight $\alpha$, is constant in $(a,x_{1})$ and $(x_{2},b)$.
\end{boundary_constant}
\begin{proof}
Suppose that there does not exist a $x_{1}\in (a,b)$ such that $u$ is constant in $(a,x_{1})$. Then from condition \eqref{opt2} it follows that there exists a sequence $(a_{n})_{n\in\NN}\in (a,b)$, converging to $a$ and 
\[|v(a_{n})|=\alpha(a_{n}), \quad \text{for every }n\in\NN.\]
From the continuity of $v$ and the fact that it belongs to $H_{0}^{1}(\om)$ we have
\[0=\lim_{n\to\infty}|v(a_{n})|=\lim_{n\to\infty}\alpha(a_{n})\ge \min_{x\in(a,b)} \alpha(x)>0,\]
which is a contradiction. Similarly we deal with the right part of the boundary of $\om$.
\end{proof}

\subsection{A partial semigroup property}\label{sec:semigroup}
For this section it is convenient to introduce the following notation 
\[S_{\alpha(x)}(f):=\underset{u\in\bv(\om)}{\operatorname{argmin}}\;\frac{1}{2}\int_{\om}(f-u)^{2}dx+\int_{\om}\alpha(x)d|Du|,\]
i.e., $S_{\alpha(x)}(f)$ denotes the solution of \eqref{weighted_rof} with data $f\in\bv(\om)$ and weight function $\alpha\in C(\overline{\om})$. Again we slightly abuse the notation by writing $S_{\alpha(x)}(f)$ when $\alpha$ is not a constant function and $S_{\alpha}(f)$ when it is. It is a well-known fact that the following \emph{semigroup} property holds for the one dimensional scalar total variation problem \cite{scherzer2009variational} 
\begin{equation}\label{semigroup_scalar}
S_{\alpha_{1}+\alpha_{2}}(f)=S_{\alpha_{2}}\left (S_{\alpha_{1}}(f) \right )=S_{\alpha_{1}}\left (S_{\alpha_{2}}(f) \right ),\quad \alpha_{1}, \alpha_{2}>0.
\end{equation}
In other words, one can obtain the solution of $\tv$ regularisation with scalar parameter $\alpha_{1}+\alpha_{2}$ and data $f$ by applying  $\tv$ regularisation with parameter $\alpha_{2}$ to the result which is obtained by applying $\tv$ regularisation with parameter $\alpha_{1}$ to the data $f$. The result remains the same if we apply $\tv$ regularisation first with parameter $\alpha_{2}$ and then with $\alpha_{1}$. Here we examine  whether this property holds true in the weighted $\tv$ regularisation or not. The next proposition states that this is indeed the case when the second regularisation parameter is a scalar. 

\newtheorem{semigroup}[alpha_sgn]{Proposition}
\begin{semigroup}[Partial semigroup property]\label{lbl:semigroup}
Let $f\in\bv(\om)$, $\alpha_{1}\in C(\overline{\om})$ with $\alpha_{1}>0$, and $\alpha_{2}>0$ be a scalar.
Then
\begin{equation}\label{smg}
S_{\alpha_{1}(x)+\alpha_{2}}(f)=S_{\alpha_{2}}\left (S_{\alpha_{1}(x)}(f) \right ).
\end{equation}
\end{semigroup}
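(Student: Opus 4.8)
The plan is to verify the identity through the optimality conditions of Proposition \ref{lbl:optimality}, exploiting the fact that they are linear in the dual variable. Write $u_{1}=S_{\alpha_{1}(x)}(f)$ and $u_{12}=S_{\alpha_{2}}(u_{1})$, and let $v_{1},v_{2}\in H_{0}^{1}(\om)$ be the associated dual functions, so that
\[
v_{1}'=f-u_{1},\qquad -v_{1}\in\alpha_{1}(x)\Sgn(Du_{1}),
\]
\[
v_{2}'=u_{1}-u_{12},\qquad -v_{2}\in\alpha_{2}\Sgn(Du_{12}).
\]
My goal is to show that $u_{12}$ is itself the solution of \eqref{weighted_rof} with data $f$ and weight $\alpha_{1}+\alpha_{2}$; since that weight is continuous and strictly positive, its problem has a unique solution, so this identification yields \eqref{smg}. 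The natural dual certificate is $v:=v_{1}+v_{2}\in H_{0}^{1}(\om)$, and I would check the two optimality conditions for the pair $(u_{12},v)$ with weight $\alpha_{1}+\alpha_{2}$.

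The condition \eqref{opt1} is immediate, since $v'=v_{1}'+v_{2}'=(f-u_{1})+(u_{1}-u_{12})=f-u_{12}$, and the pointwise part of \eqref{opt2} is also easy, because $|v|\le|v_{1}|+|v_{2}|\le\alpha_{1}(x)+\alpha_{2}$ everywhere by the triangle inequality. The real content is the complementarity relation $-v=(\alpha_{1}(x)+\alpha_{2})\,\sgn(Du_{12})$, which must hold $|Du_{12}|$-almost everywhere. As $-v_{2}=\alpha_{2}\,\sgn(Du_{12})$ already holds $|Du_{12}|$-a.e.\ by construction, everything reduces to the key claim that
\[
-v_{1}=\alpha_{1}(x)\,\sgn(Du_{12})\quad|Du_{12}|\text{-a.e.},
\]
i.e.\ that the pinning of $v_{1}$ inherited from the optimality of $u_{1}$ transfers from $\supp|Du_{1}|$ onto $\supp|Du_{12}|$.

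I expect this claim to be the main obstacle, and I would establish it by splitting $Du_{12}$ into its jump and diffuse parts. For the jump part I can invoke the results already proven for a constant (hence differentiable) weight: applying Proposition \ref{lbl:jump_set_incl} and Proposition \ref{lbl:correct_jump} to the scalar problem $u_{12}=S_{\alpha_{2}}(u_{1})$ gives $J_{u_{12}}\subseteq J_{u_{1}}$ with jumps in the same direction, so at each $x\in J_{u_{12}}$ one has $\sgn(Du_{12})(x)=\sgn(Du_{1})(x)$ and hence $-v_{1}(x)=\alpha_{1}(x)\sgn(Du_{1})(x)=\alpha_{1}(x)\sgn(Du_{12})(x)$. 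For the diffuse part I would use that $\alpha_{2}$ is a \emph{scalar}: on $\supp|Du_{12}|$ the relation $-v_{2}\in\alpha_{2}\Sgn(Du_{12})$ forces $v_{2}\equiv\mp\alpha_{2}$, so on the level set where $v_{2}$ is constant one has $v_{2}'=0$, i.e.\ $u_{1}=u_{12}$; there $\sgn(Du_{1})=\sgn(Du_{12})$, and the optimality of $u_{1}$ again yields $-v_{1}=\alpha_{1}(x)\sgn(Du_{12})$. This is exactly the place where scalarity of $\alpha_{2}$ is indispensable: for a non-constant second weight the same pinning would give $u_{1}-u_{12}=-\alpha_{2}'$ instead of $u_{1}=u_{12}$, destroying the argument, which is fully consistent with the non-commutativity noted after the statement.

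The delicate technical point I would treat most carefully is the continuous (possibly Cantor) part of $Du_{12}$ that may sit on the boundary of the contact set $\{v_{2}=\mp\alpha_{2}\}$ rather than on its interior. Here I would argue that $v_{2}'=u_{1}-u_{12}$ vanishes $\mathcal{L}$-a.e.\ on the level set $\{v_{2}=\mp\alpha_{2}\}$ (the standard property that a Lipschitz function has zero derivative a.e.\ on a level set), upgrade this to $u_{1}=u_{12}$ $|Du_{12}|$-a.e.\ on $\supp|Du_{12}|$, and so transfer both the matching of signs and the pinning of $v_{1}$. Once the key claim is secured, Proposition \ref{lbl:optimality} certifies $u_{12}$ as the unique solution for weight $\alpha_{1}+\alpha_{2}$, which completes the proof.
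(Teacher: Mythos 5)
Your overall strategy is exactly the one the paper follows: set $v=v_{1}+v_{2}$, observe that \eqref{opt1} and the pointwise bound $|v|\le \alpha_{1}(x)+\alpha_{2}$ are immediate, and reduce everything to the complementarity relation $-v_{1}=\alpha_{1}(x)\sgn(Du_{12})$ $|Du_{12}|$-a.e. Your treatment of the atomic part is also sound: Propositions \ref{lbl:jump_set_incl} and \ref{lbl:correct_jump} applied to the scalar problem $u_{12}=S_{\alpha_{2}}(u_{1})$ do give $J_{u_{12}}\subseteq J_{u_{1}}$ with matching jump directions, and the pinning of $v_{1}$ transfers at those atoms.

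The gap is in the diffuse part, precisely at the step you flag as delicate. From $v_{2}'=0$ $\mathcal{L}$-a.e.\ on the level set $\{v_{2}=\mp\alpha_{2}\}$ you obtain $u_{1}=u_{12}$ only $\mathcal{L}$-almost everywhere on that set, but the Cantor part of $Du_{12}$ can be concentrated on an $\mathcal{L}$-null subset of it, so this gives no information $|D^{c}u_{12}|$-a.e.; the proposed ``upgrade'' has no mechanism behind it. More fundamentally, even full equality $u_{1}=u_{12}$ at $|Du_{12}|$-a.e.\ point would not by itself yield $\sgn(Du_{1})=\sgn(Du_{12})$ there — passing from pointwise equality of functions to equality of their derivative measures is exactly the content of Lemma \ref{lbl:variation_equal_f_u}, which needs a careful $G_{\delta}$ argument and, to be applicable, already presupposes the equality at the relevant points. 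What you actually need at a non-atomic point $x\in\supp|Du_{12}|$ is that $x$ lies in the contact set of the \emph{first} problem with the correct orientation, i.e.\ $v_{1}(x)=\mp\alpha_{1}(x)$, and that requires knowing $u_{1}$ is non-constant near $x$ with the same monotonicity as $u_{12}$. The paper obtains this by a single pointwise argument that needs no decomposition of $Du_{12}$: working with the continuous representatives $-v_{i}/\alpha_{i}$ of $\sgn(Du_{i})$, at any $x\in\supp|Du_{12}|$ with $\sgn(Du_{12})(x)=1$ the function $u_{12}$ is strictly increasing across $x$; if $u_{1}$ were constant near $x$ then so would $u_{12}$ be (by the optimality conditions of the second problem), and $u_{1}$ decreasing near $x$ while its denoised version $u_{12}$ increases is likewise impossible — hence $\sgn(Du_{1})(x)=1$ and $-v_{1}(x)=\alpha_{1}(x)$. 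Replacing your contact-set/level-set argument by this local monotonicity argument closes the gap; as written, the diffuse case is not proved.
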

\begin{proof}
Let $u_{1}:=S_{\alpha_{1}(x)}(f) $ and $u_{2}:=S_{\alpha_{2}}\left (S_{\alpha_{1}(x)}(f) \right )$.
The optimality conditions for the corresponding minimisation problems read
\begin{alignat*}{3}
v_{1}'&=f-u_{1}, 					    &\qquad\qquad v_{2}'&=u_{1}-u_{2},\\
-v_{1}&\in\alpha_{1}(x)\Sgn(Du_{1}), &-v_{2}&\in\alpha_{2}\Sgn(Du_{2}),
\end{alignat*}
where both $v_{1},v_{2}\in H_{0}^{1}(\om)$. Defining $v_{12}=v_{1}+v_{2}$,  we have $v_{12}\in H_{0}^{1}(\om)$ and 
\[v_{12}'=f-u_{2}.\]
Thus, for \eqref{smg} it suffices to show that
\begin{equation}
-v_{1}-v_{2}\in (\alpha_{1}(x)+\alpha_{2})\sgn(Du_{2}).
\end{equation}
Since $|v_{1}(x)|\le \alpha_{1}(x)$ for Lebesgue--almost every $x\in\om$ and $\|v_{2}\|_{\infty}\le \alpha_{2}$, it obviously holds that $|v_{12}(x)|\le \alpha_{1}(x)+\alpha_{2}$ for Lebesgue--almost every $x\in\om$. Consequently, it suffices to check that
\begin{equation}\label{v12_toprove}
-v_{12}=(\alpha_{1}(x)+\alpha_{2})\sgn(Du_{2}),\quad |Du_{2}|-\text{a.e.}
\end{equation}
In what follows we work with the continuous representatives of $\sgn(Du_{1})$ and $\sgn(Du_{2})$. Let $x\in\om$ with $x\in \supp(|Du_{2}|)$ and $\sgn(Du_{2})(x)=1$, then we will show that $\sgn(Du_{1})(x)=1$ as well (similarly we proceed with $-1$). Note that in this case (any good representative of) $u_{2}$ is increasing near $x$ in the sense that there exists a small enough $\epsilon>0$ such that
\[u_{2}(s)\le u_{2}(s')<u_{2}(t)\le u_{2}(t'),\quad \text{for every } x-\epsilon<s<s'<x<t<t'<x+\epsilon. \]
Indeed this comes from the fact that
\[u_{2}^{r}(s)-u_{2}^{r}(t)=\int_{(s,t]} \sgn(Du_{2})(y)d|Du_{2}|,\quad s<x<t,\]
combined with the continuity of $\sgn(Du_{2})$ and the fact that $\sgn(Du_{2})(x)=1$. We claim then, that $x\in \supp(|Du_{1}|)$ as well. Indeed, if this is not true then $|Du_{1}|((x-\epsilon,x+\epsilon))=0$ for a small  $\epsilon>0$, and thus the same would hold for $u_{2}$ (which follows directly when using the optimality conditions). Since $x\in \supp(|Du_{1}|)$, then either $\sgn(Du_{1})(x)=1$ or $\sgn(Du_{1})(x)=-1$. However, the latter case can be excluded and thus 
$\sgn(Du_{1})(x)=1$. Indeed, from the optimality conditions, and by considering different cases depending on whether $x$ it is a jump point or not, it follows that  the data $u_{1}$ and the solution $u_{2}$ cannot be decreasing and increasing respectively near $x$. 
 As a result, if $\sgn(Du_{2})(x)=1$ it follows that $\sgn(Du_{1})(x)=1$. From the optimality conditions we obtain
\[-v_{2}(x)=\alpha_{2}\quad \text{and}\quad -v_{1}(x)=\alpha_{1}(x),\]
respectively. Hence $-v_{12}(x)=\alpha_{1}(x)+\alpha_{2}$. Similarly if $\sgn(Du_{2})(x)=1$ then 
$-v_{12}(x)=-\alpha_{1}(x)-\alpha_{2}$ and thus \eqref{v12_toprove} holds.
\end{proof}

It is natural to ask whether \eqref{smg} holds when both $\alpha_{1}, \alpha_{2}$ are non-scalar continuous weight functions. Propositions \ref{lbl:alpha_spike_up} and \ref{lbl:alpha_spike_down} indicate that this is not necessarily true. 
%
\begin{figure}[t!]
\begin{center}
\begin{subfigure}[t]{0.3\textwidth}
	\centering
	\includegraphics[width=0.9\textwidth]{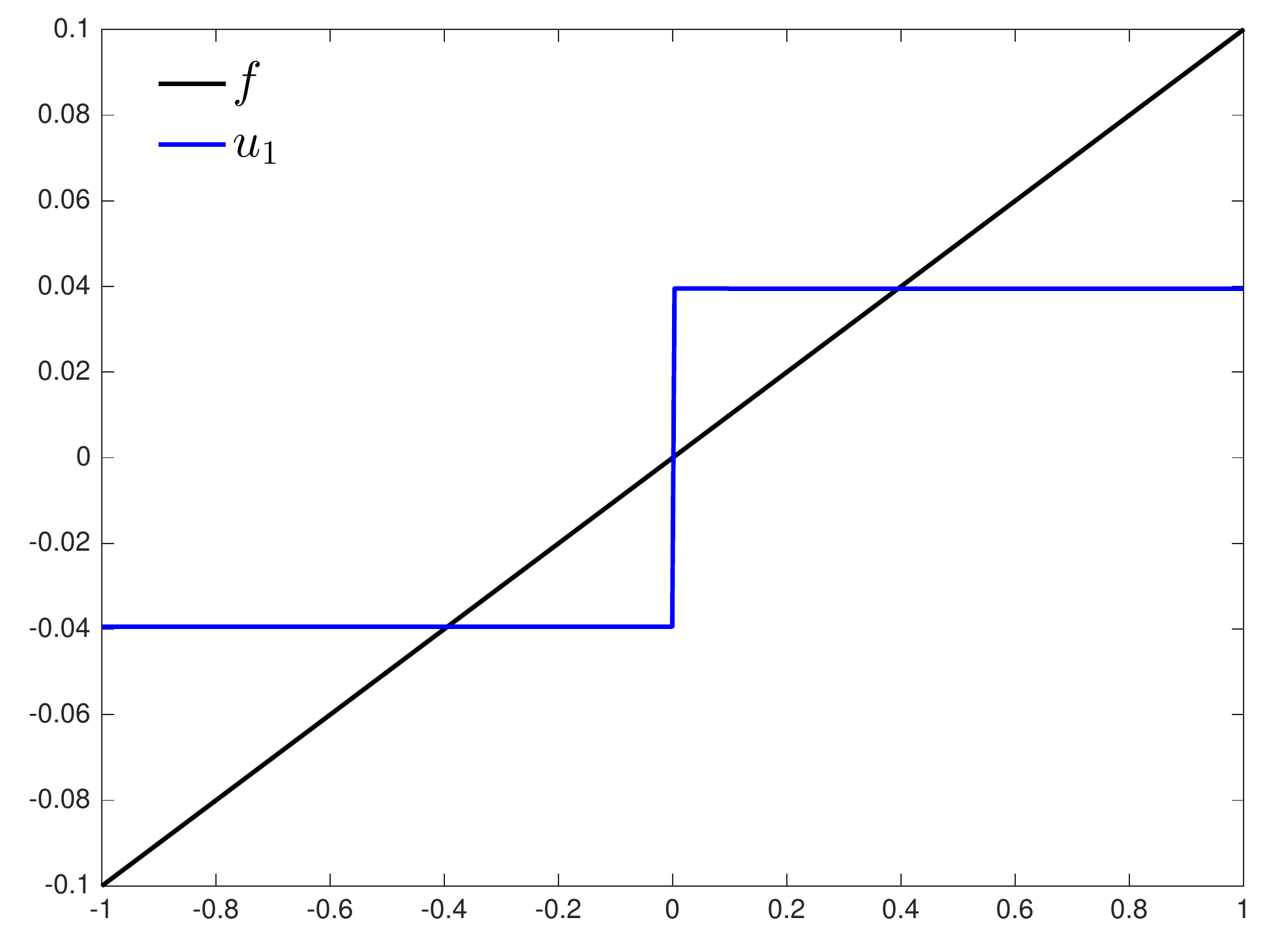}
	\label{fig:sg_spikes_f_u1}
\end{subfigure}
\begin{subfigure}[t]{0.3\textwidth}
	\centering
	\includegraphics[width=0.9\textwidth]{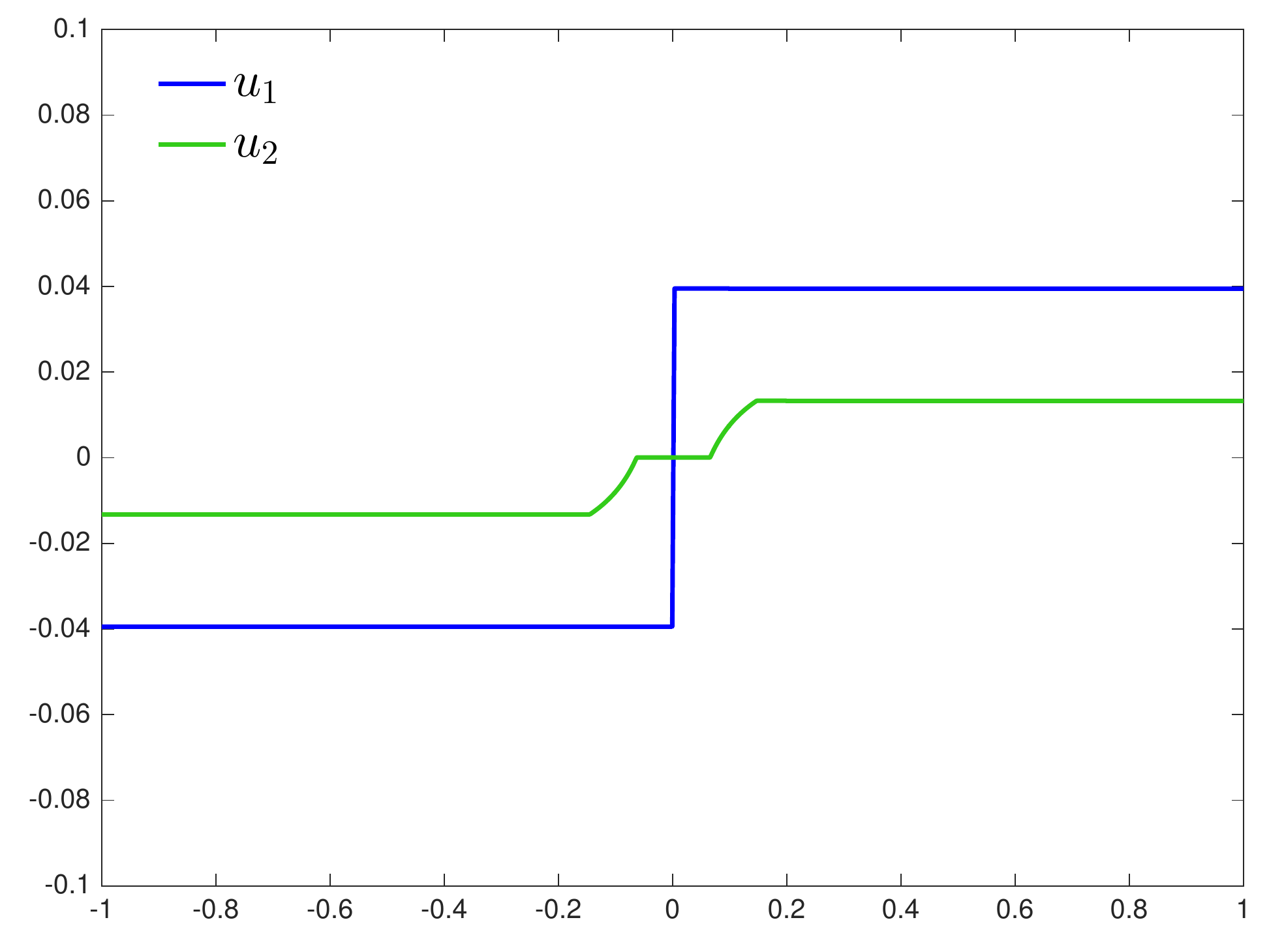}
	\label{fig:sg_spikes_u1_u2}
\end{subfigure}
\begin{subfigure}[t]{0.3\textwidth}
	\centering
	\includegraphics[width=0.9\textwidth]{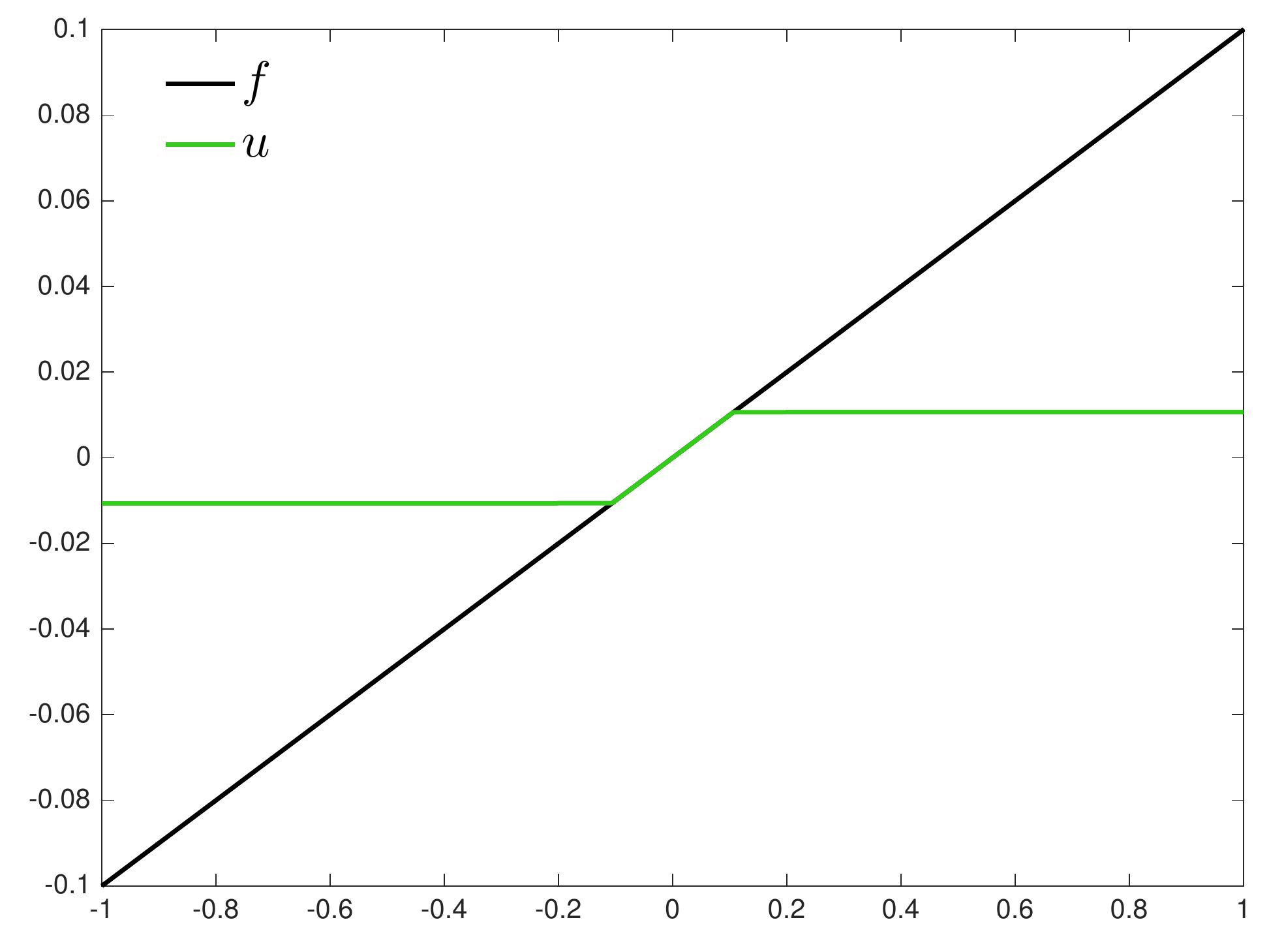}
	\label{fig:sg_spikes_f_u}
\end{subfigure}

\hspace{-0.37cm}
\begin{subfigure}[t]{0.3\textwidth}
	\centering
	\captionsetup{width=.9\linewidth}
	\includegraphics[width=0.9\textwidth]{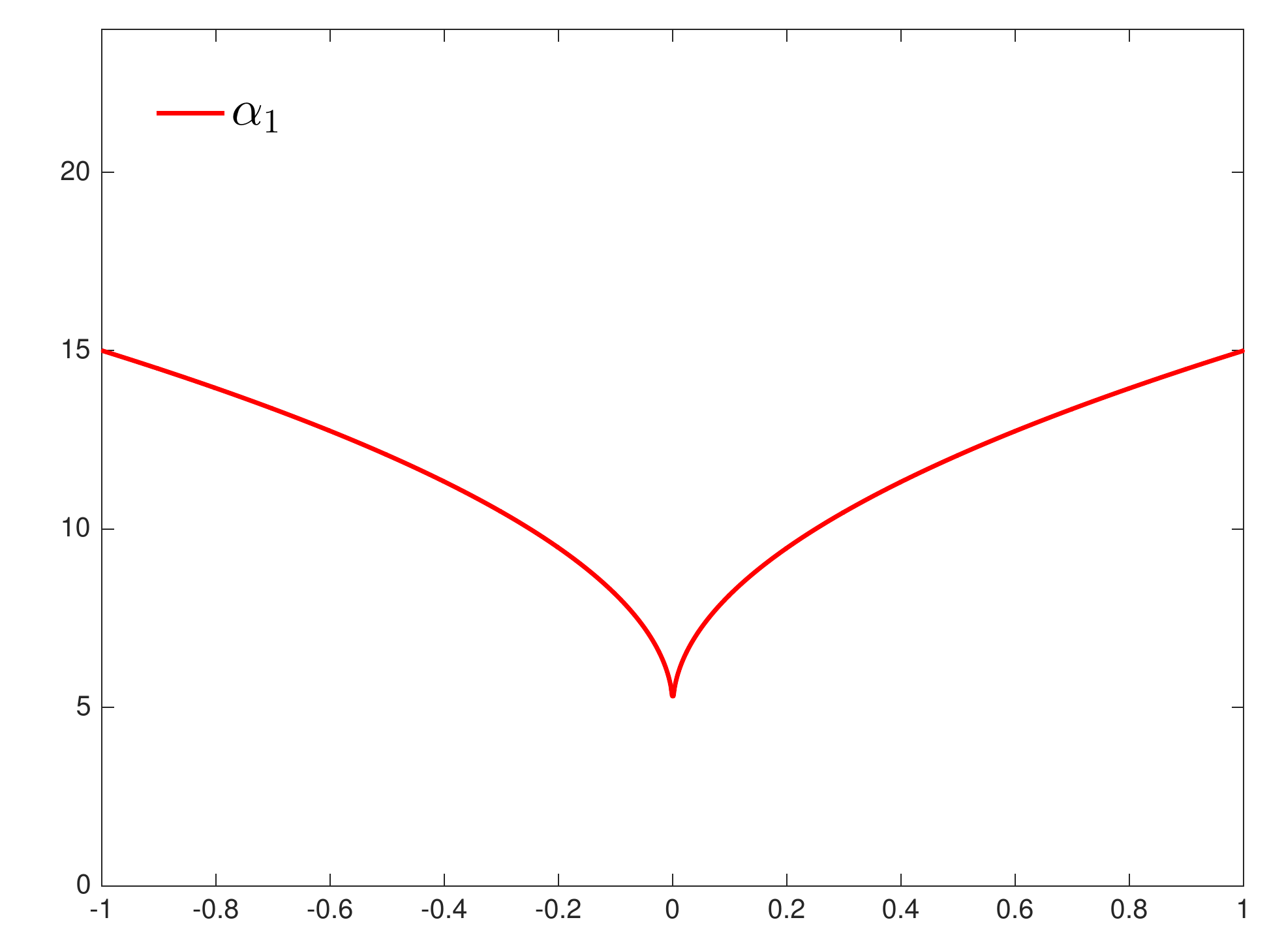}
	\caption{Regularisation of the initial data with a downward  spike weight function $\alpha_{1}$. Creation of a new discontinuity}
	\label{fig:sg_spikes_f_u1}
\end{subfigure}
\begin{subfigure}[t]{0.3\textwidth}
	\centering
	\captionsetup{width=.9\linewidth}
	\includegraphics[width=0.9\textwidth]{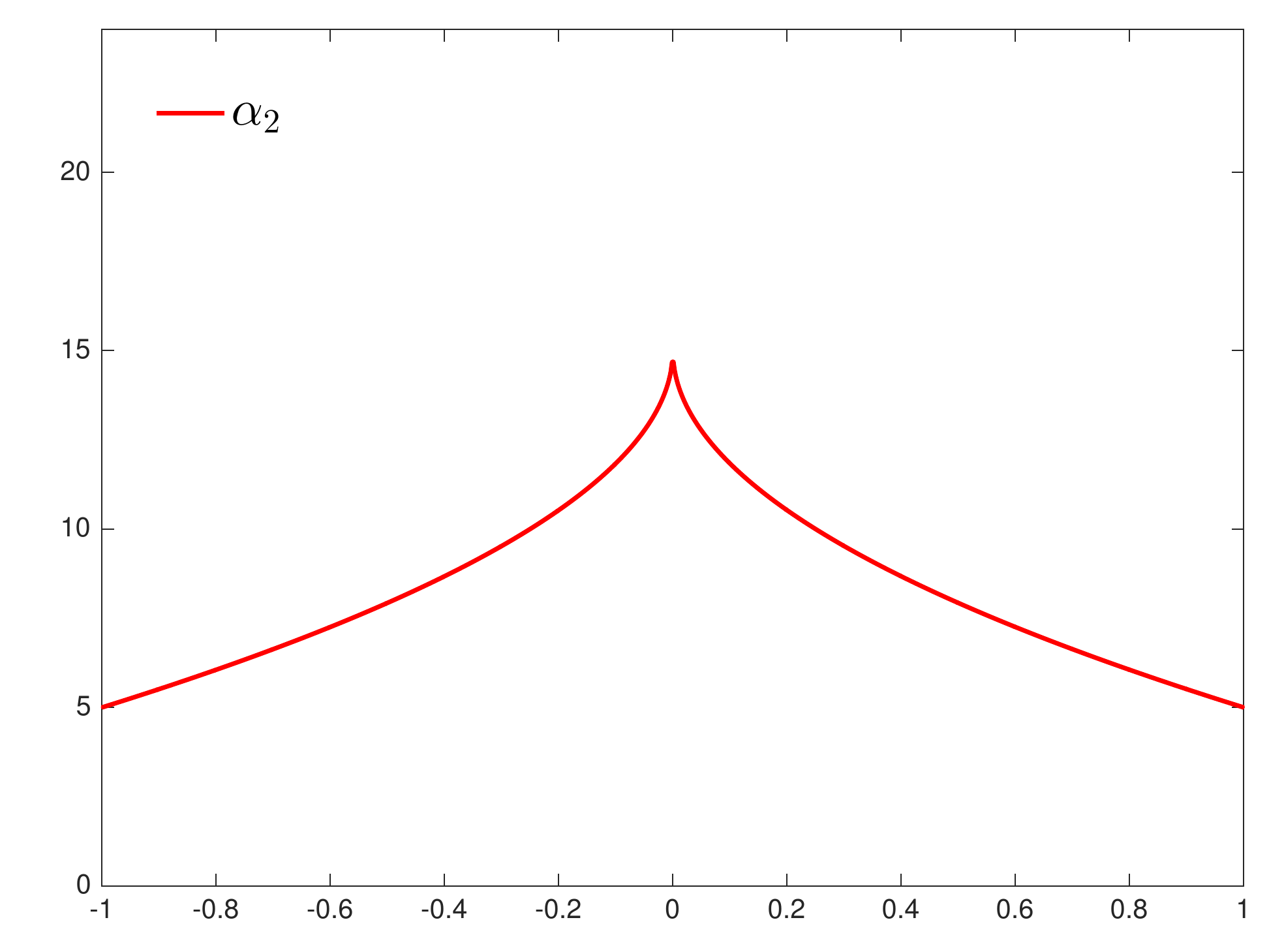}
	\caption{Regularisation of the result in Figure \ref{fig:sg_spikes_f_u1} with an upward  spike weight function $\alpha_{2}=20-\alpha_{1}$. Creation of a plateau}
	\label{fig:sg_spikes_u1_u2}
\end{subfigure}
\begin{subfigure}[t]{0.3\textwidth}
	\centering
	\captionsetup{width=.9\linewidth}
	\includegraphics[width=0.9\textwidth]{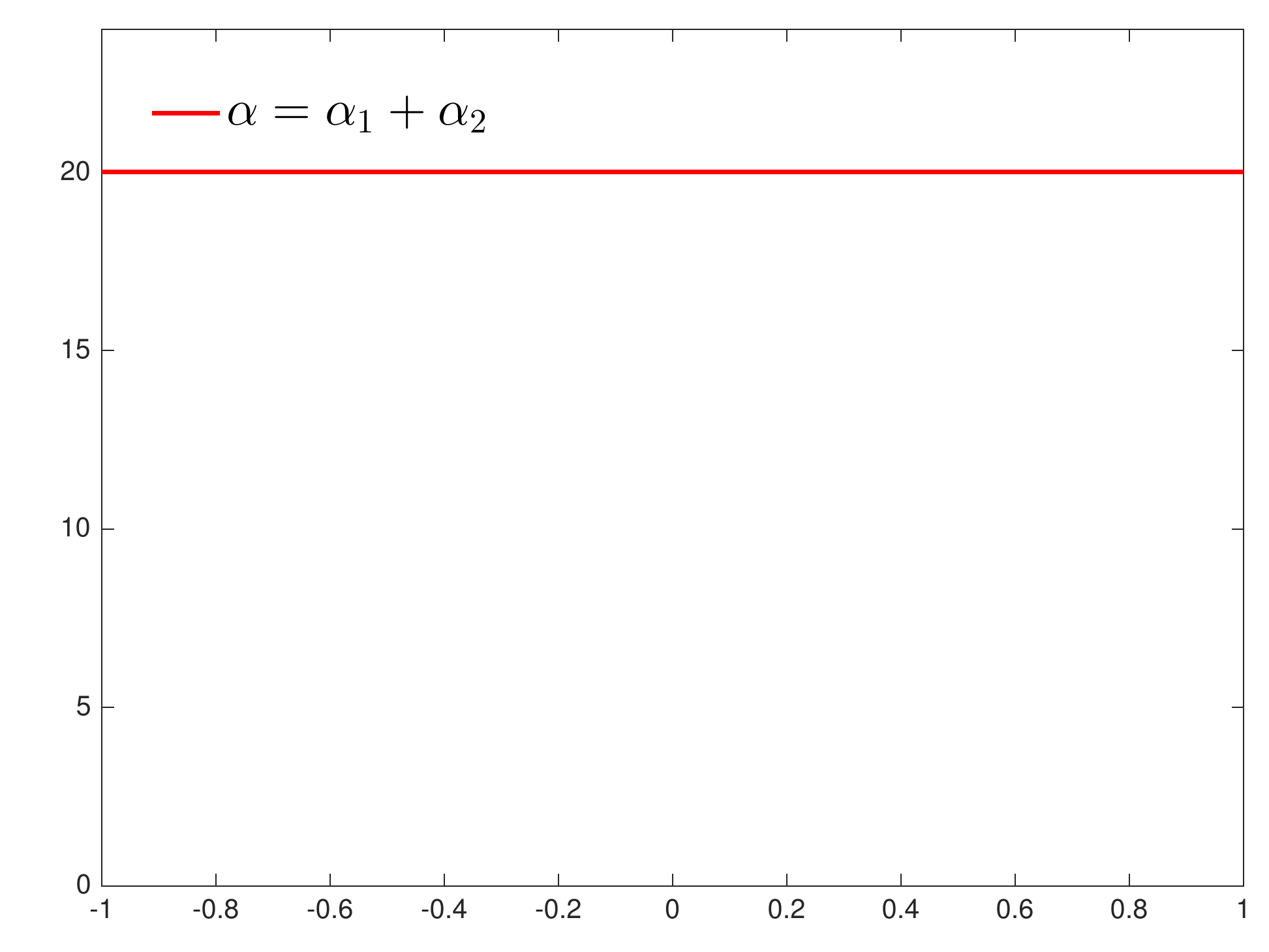}
	\caption{Regularisation of the initial data with the sum of the previous weight functions $\alpha=\alpha_{1}+\alpha_{2}$. The result is different than the one obtained by applying the regularisation in two steps}
	\label{fig:sg_spikes_f_u}
\end{subfigure}
\caption{Failure of the semigroup property for the weighted total variation model in the case where both weights $\alpha_{1}$  and $\alpha_{2}$ are not constant}
\label{fig:sg_spikes}
\end{center}
\end{figure}
We illustrate this fact with a numerical example in Figure \ref{fig:sg_spikes}. We start with an affine function $f$ and first apply weighted $\tv$ regularisation with a square root type weight function $\alpha_{1}$. Note the discontinuity that it is created in the solution $u_{1}$ exactly at the point where $\alpha_{1}$ has the downward spike; see Figure \ref{fig:sg_spikes_f_u1}. By further applying weighted $\tv$ regularisation to the result with weight function $\alpha_{2}=20-\alpha_{1}$, a plateau is created at this point; compare Figure \ref{fig:sg_spikes_u1_u2}.  The final result is different to the solution of the $\tv$ regularisation of the initial data $f$ with weight $\alpha_1+\alpha_{2}=20$, which is shown in Figure \ref{fig:sg_spikes_f_u}.

However, what is perhaps even more surprising is that  property \eqref{smg} of Proposition \ref{lbl:semigroup} is non-commutative, i.e., it might fail even in the case where we first regularise with a constant weight function and  then with a non constant one; hence the term \emph{partial semigroup property} in  Proposition \ref{lbl:semigroup}, i.e.,
\begin{equation}\label{non_commute}
S_{\alpha_{1}(x)+\alpha_{2}}(f)=S_{\alpha_{2}}\left (S_{\alpha_{1}(x)}(f) \right )\ne S_{\alpha_{1}(x)}\left (S_{\alpha_{2}}(f) \right ).
\end{equation}
In order to get an intuition for \eqref{non_commute}, observe that the optimality conditions read
\begin{alignat*}{3}
v_{1}'&=f-u_{1}, 					    &\qquad\qquad v_{2}'&=u_{1}-u_{2},\\
-v_{1}&\in\alpha_{2}\Sgn(Du_{1}), &-v_{2}&\in\alpha_{1}(x)\Sgn(Du_{2}).
\end{alignat*}
Recalling $(v_{1}+v_{2})'=f-u_{2}$, for $u_{2}$ to be the result obtained after regularising $f$ with $\alpha_{1}+\alpha_{2}(x)$ requires that
\begin{equation}\label{could_fail}
-v_{1}-v_{2}=(\alpha_{1}(x)+\alpha_{2})\Sgn(Du_{2}),\quad |Du_{2}|-\text{a.e.}
\end{equation}
However, condition \eqref{could_fail} could fail if for instance $u_{2}$ has a new discontinuity created at a point $x$ around which $u_{1}$ is constant and below (or above) $f$. That would enforce $|v_{2}(x)|=\alpha_{1}(x)$ and $|v_{1}(x)|<\alpha_{2}$ and thus \eqref{could_fail} would not hold.

\begin{figure}[t!]
\begin{center}
\begin{subfigure}[t]{0.24\textwidth}
	\centering
	\includegraphics[width=0.9\textwidth]{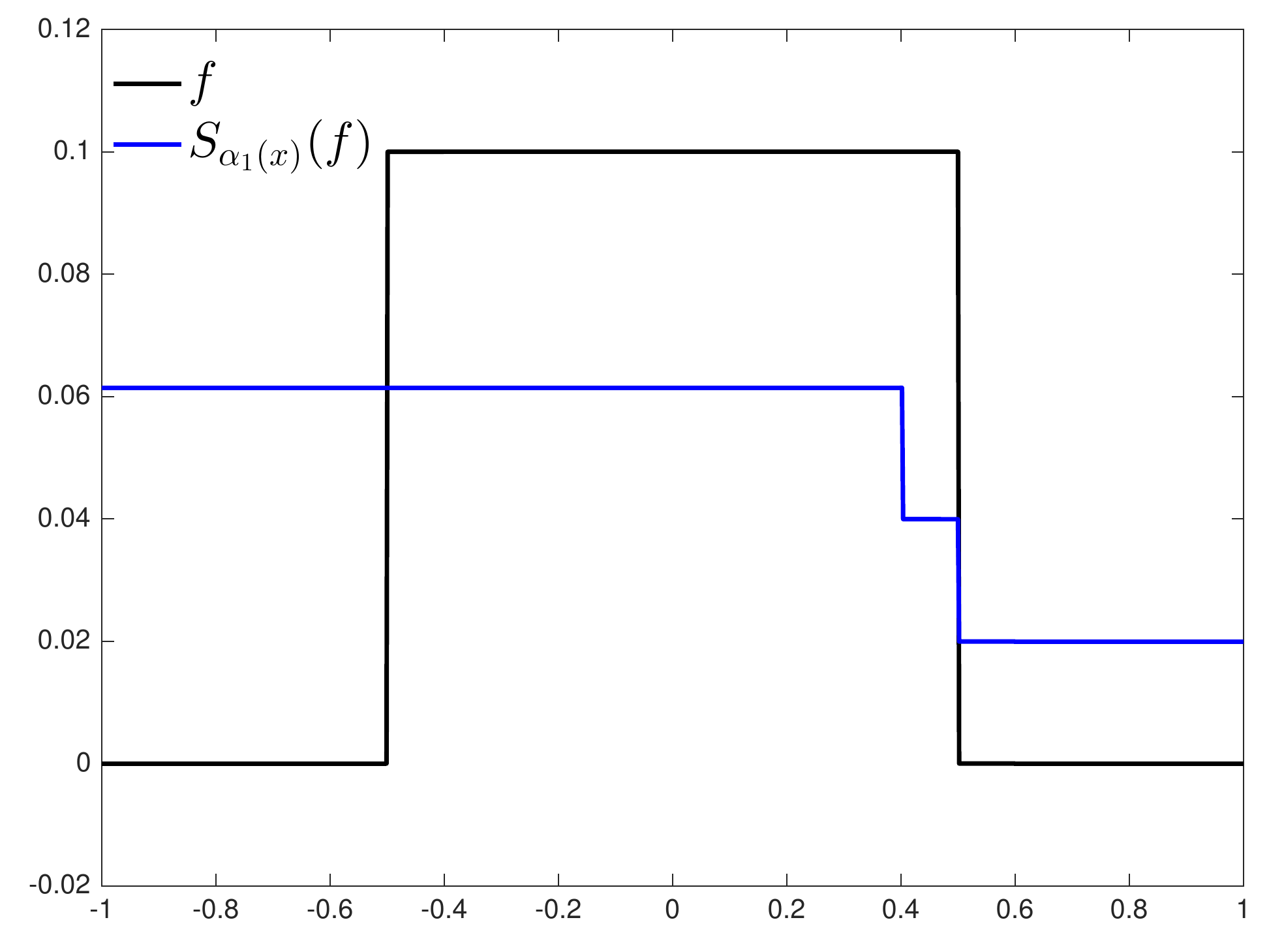}
	\label{fig:sg2_f_u1b}
\end{subfigure}
\begin{subfigure}[t]{0.24\textwidth}
	\centering
	\includegraphics[width=0.9\textwidth]{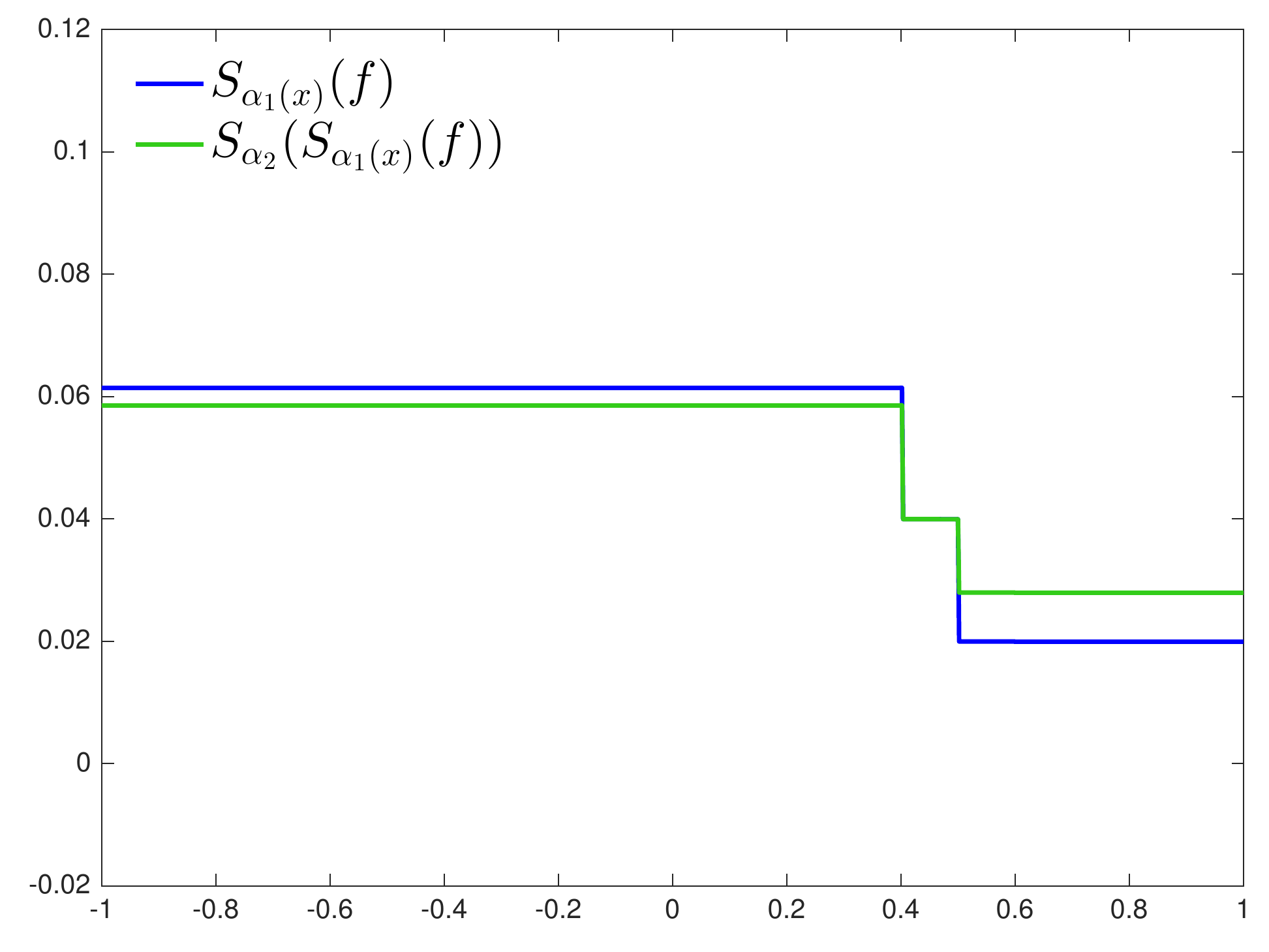}
	\label{fig:sg2_u1b_u2b}
\end{subfigure}
\begin{subfigure}[t]{0.24\textwidth}
	\centering
	\includegraphics[width=0.9\textwidth]{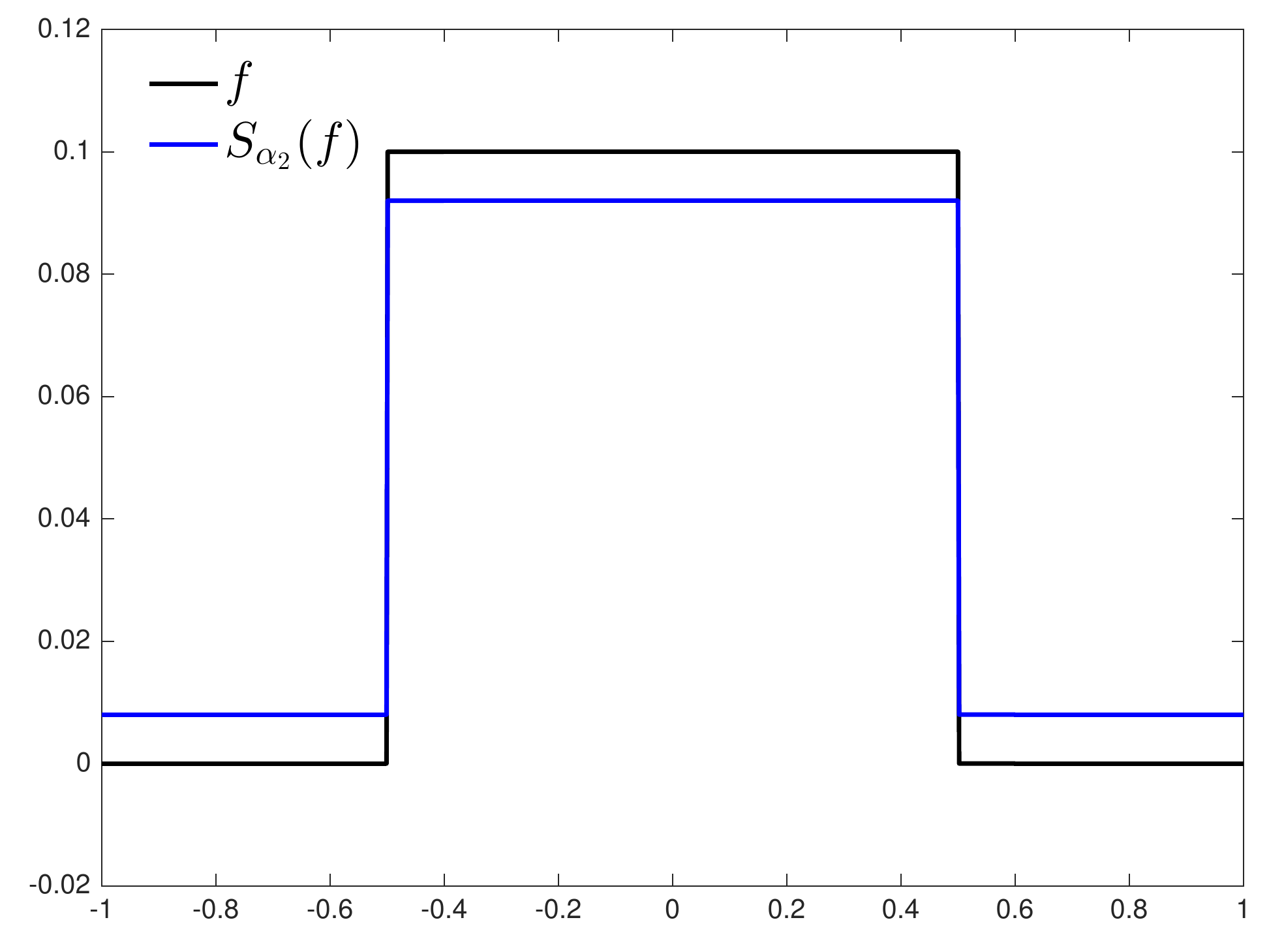}
	\label{fig:sg2_f_u1}
\end{subfigure}
\begin{subfigure}[t]{0.24\textwidth}
	\centering
	\includegraphics[width=0.9\textwidth]{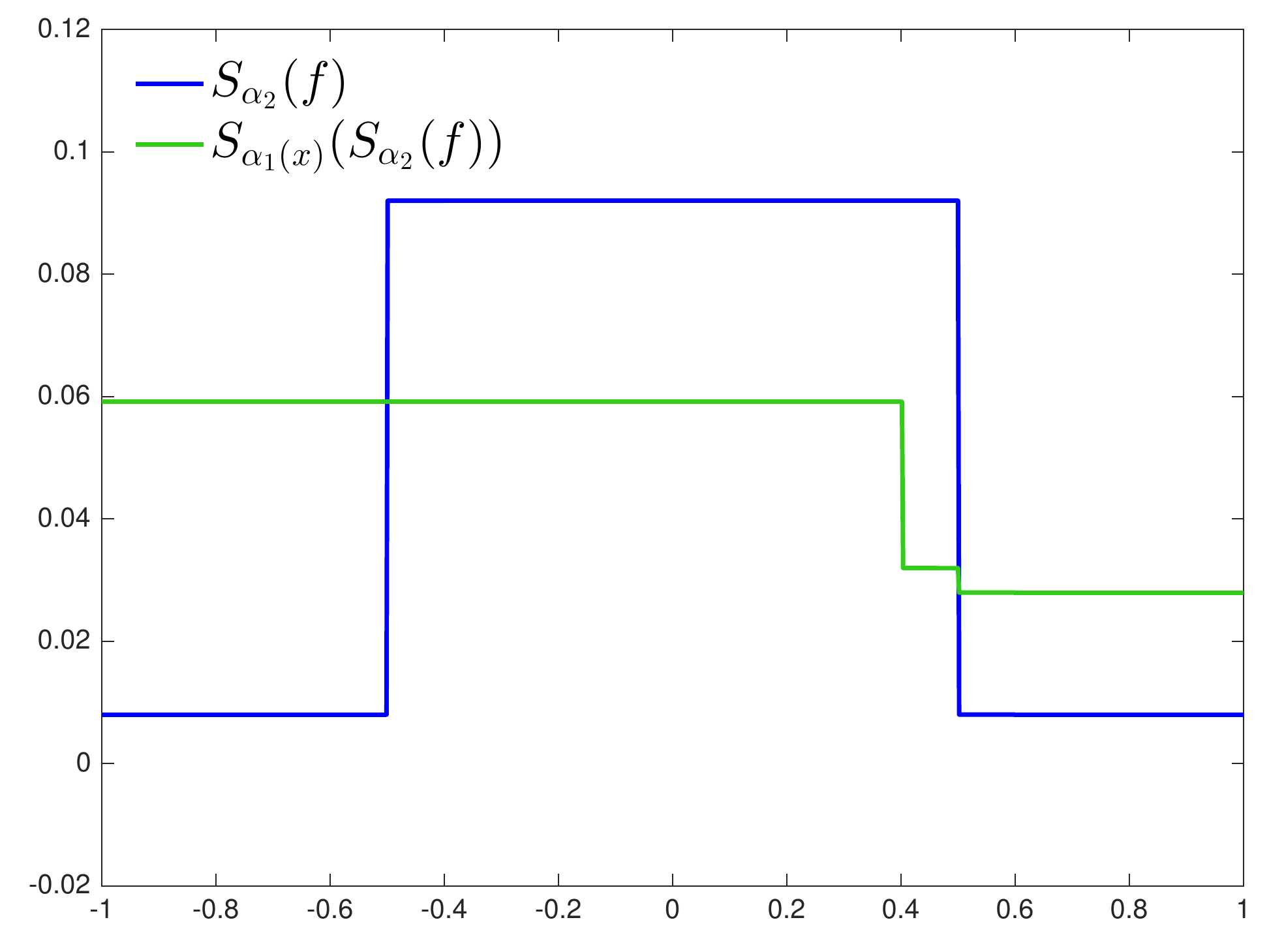}
	\label{fig:sg2_u1_u2}
\end{subfigure}

\hspace{-0.37cm}
\begin{subfigure}[t]{0.24\textwidth}
	\centering
	\captionsetup{width=.9\linewidth}
	\includegraphics[width=0.9\textwidth]{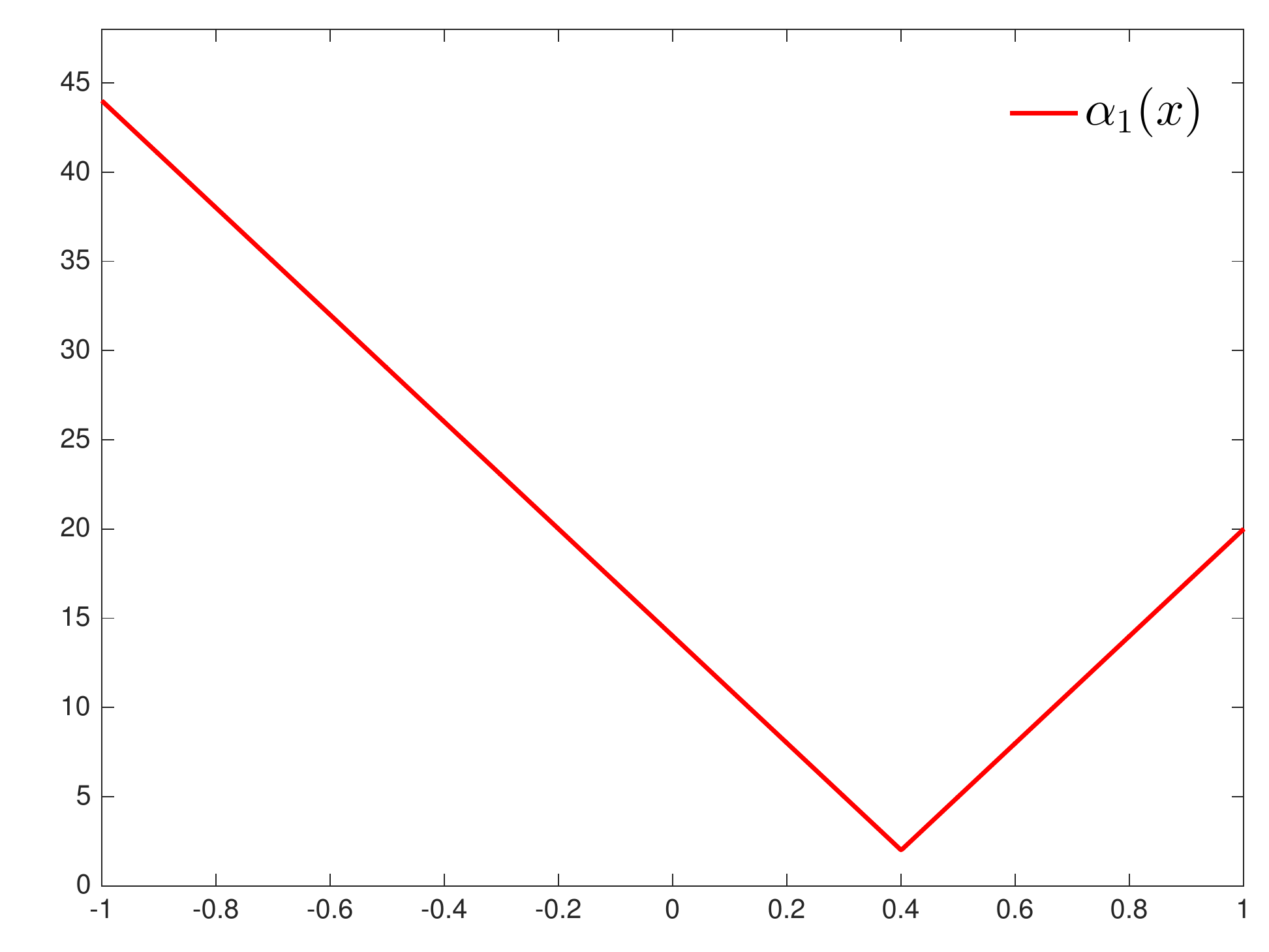}
	\caption{Regularisation of the initial data with a non-constant weight $\alpha_{1}(x)$}
	\label{fig:sg2_alpha2b}
\end{subfigure}
\begin{subfigure}[t]{0.24\textwidth}
	\centering
	\captionsetup{width=.9\linewidth}
	\includegraphics[width=0.9\textwidth]{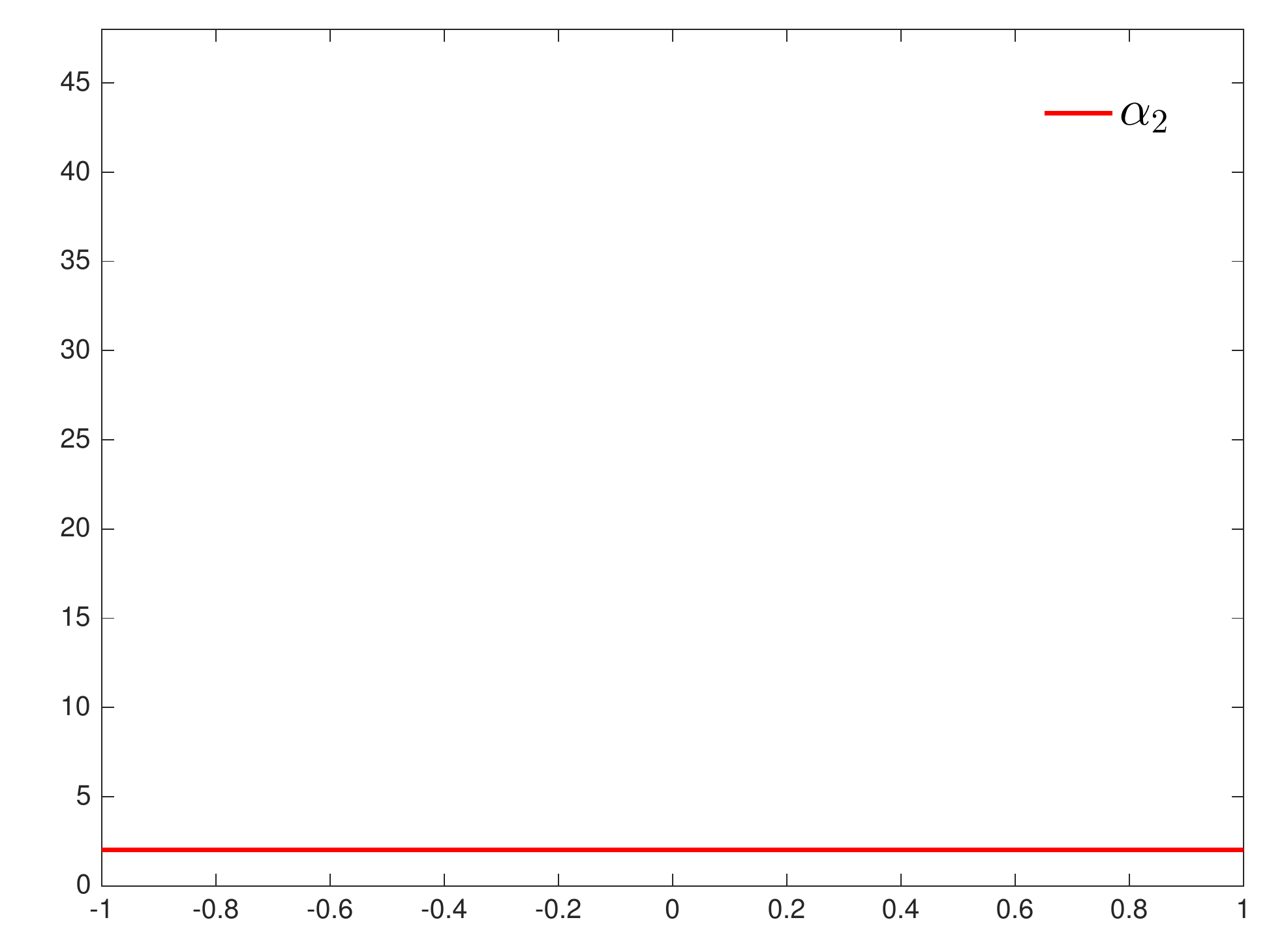}
	\caption{Regularisation of the result in Figure \ref{fig:sg2_alpha2b} with a constant weight $\alpha_{2}$}
	\label{fig:sg2_alpha1b}
\end{subfigure}
\begin{subfigure}[t]{0.24\textwidth}
	\centering
	\captionsetup{width=.9\linewidth}
	\includegraphics[width=0.9\textwidth]{sg2_alpha2_scalar.eps}
	\caption{Regularisation of the initial data with a constant weight $\alpha_{2}$}
	\label{fig:sg2_alpha1a}
\end{subfigure}
\begin{subfigure}[t]{0.24\textwidth}
	\centering
	\captionsetup{width=.9\linewidth}
	\includegraphics[width=0.9\textwidth]{sg2_alpha1x.eps}
	\caption{Regularisation of the result in Figure \ref{fig:sg2_alpha1a} with a non-constant weight $\alpha_{1}(x)$}
	\label{fig:sg2_alpha2a}
\end{subfigure}

\begin{subfigure}[t]{0.35\textwidth}
	\centering
	\captionsetup{width=2\linewidth}
	\includegraphics[width=0.9\textwidth]{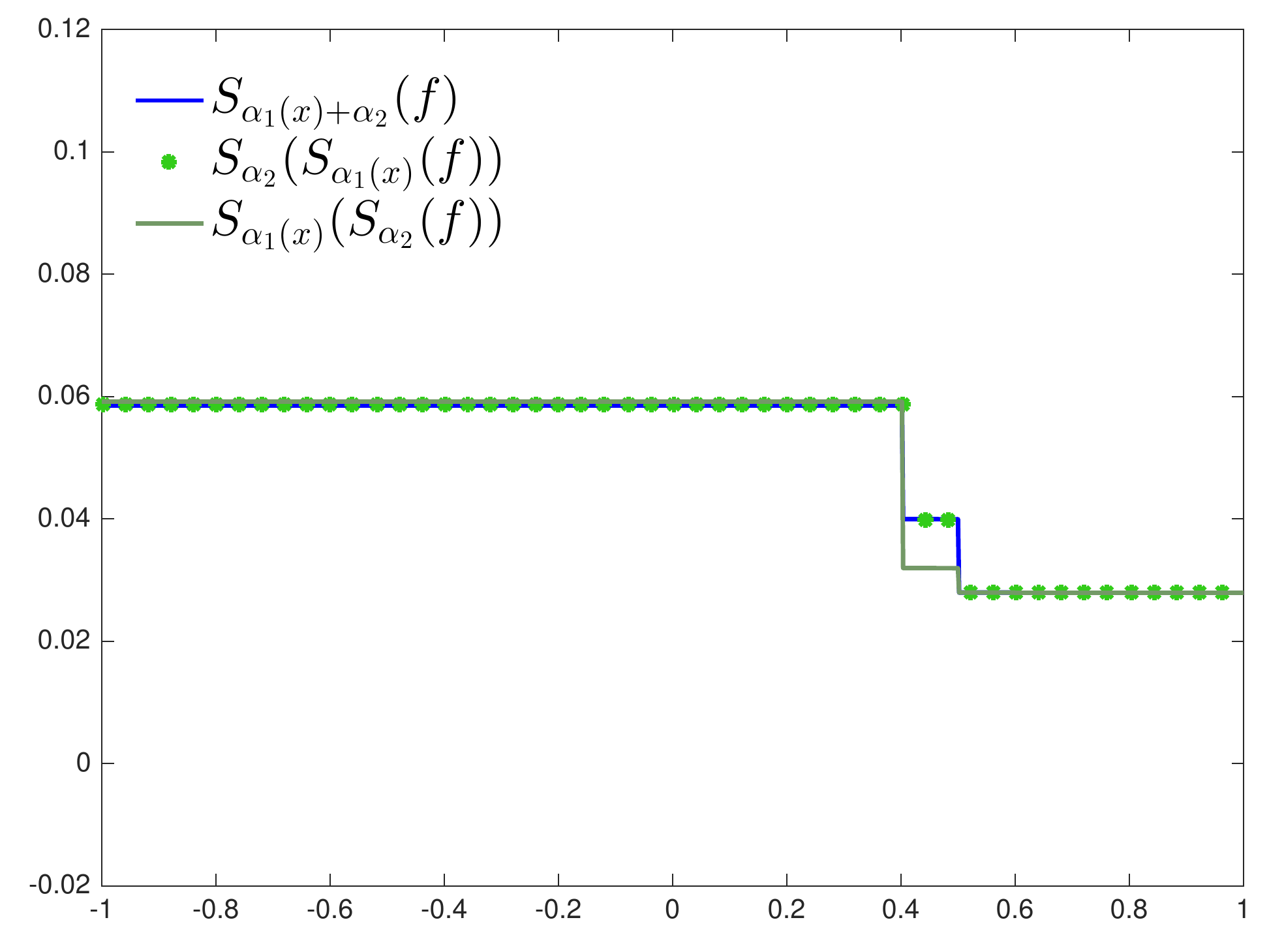}
	\caption{The result $S_{\alpha_{1}(x)+\alpha_{2}}(f)$ obtained by solving \eqref{weighted_rof} using as weight the sum of a non-constant $\alpha_{1}(x)$ and a constant  function $\alpha_{2}$ is not also obtained by successively solve \eqref{weighted_rof} using the constant and then with the non-constant weight, $S_{\alpha_{1}(x)}\left (S_{\alpha_{2}}(f)\right )$ but can be obtained by doing so, first with the non-constant and then with the constant weight $S_{\alpha_{2}}\left (S_{\alpha_{1}(x)}(f)\right )$, as Proposition \ref{lbl:semigroup} dictates}
	\label{fig:non_commute}
\end{subfigure}
\caption{Non-commutativity of the semigroup property even when one of the weights is constant}
\label{fig:sg2}
\end{center}
\end{figure}

We provide a numerical example in Figure \ref{fig:sg2}.
In Figures \ref{fig:sg2_alpha2b}--\ref{fig:sg2_alpha1b}, we display the result which is obtained when we first regularise with a non-constant weight $\alpha_{1}(x)$ and  then with a constant $\alpha_{2}$. Figures \ref{fig:sg2_alpha1a}--\ref{fig:sg2_alpha2a}, on the other hand, show the result when first regularising with $\alpha_{2}$ and then with $\alpha_{1}(x)$. Figure \ref{fig:non_commute} confirms that \eqref{non_commute} holds for this example.


\subsection{Analytic solutions for simple data and weight functions}\label{sec:exact}

Next we compute several analytic solutions for the problem \eqref{weighted_rof}. We get a further intuition about the structure of solutions and how it changes with respect to different weight functions. 

Suppose $\om=(-L,L)$, where $L>0$, and
\begin{equation}\label{data_affine}
f(x)=\lambda x,\quad x\in (-L,L),\quad \lambda>0.
\end{equation}
We also consider an absolute value type function as weight, i.e., 
\begin{equation}\label{weight_abs}
\alpha(x)=\mu|x| +c,\quad x\in(-L,L),\quad \mu,c>0.
\end{equation}

Note that the symmetry of $f$ and $\alpha$ imply that the solution $u$ is also symmetric. Thus it suffices to describe the solution in the interval $[0,L)$. According to Proposition \ref{lbl:boundary_constant}, $u$ will be constant at the right boundary of $[0,L)$, say at the interval $(x_{0},L)$. Assuming that the solution is strictly increasing on $(0,x_{0}]$,  the optimality conditions \eqref{opt1}--\eqref{opt2} yield $-\alpha'(x)=f-u$. Note that  according to Proposition \ref{lbl:lipschitz_alpha}, in this case,  $u$ will have a jump discontinuity at $x=0$.
  Thus, we study the ansatz
\[
u(x)=
\begin{cases}
\lambda x+\mu, & \text{if}\quad x\in (0,x_{0}],\\
\lambda x_{0}+\mu, & \text{if}\quad x\in (x_{0},L).
\end{cases}
\]
From the optimality conditions \eqref{opt1}--\eqref{opt2} it follows that $v$ has the form
\[
v(x)=
\begin{cases}
-\alpha(x),& \text{if}\quad x\in (0,x_{0}],\\
\frac{1}{2}\lambda x^{2}-(\lambda x_{0}+\mu)x+d,  & \text{if}\quad x\in (x_{0},L),
\end{cases}
\]
for some constant $d$. From the continuity of $v$ at $x_{0}$ and $v\in H_{0}^{1}(\om)$, we infer
\begin{align}
\lim_{x\to x_{0}+}v(x)&=-\alpha(x_{0}),\label{cond_v_1}\\
\lim_{x\to L}v(x)&=0\label{cond_v_2}.
\end{align}
Using conditions \eqref{cond_v_1}--\eqref{cond_v_2}, one computes
\begin{equation}\label{x0}
x_{0}=L-\frac{\sqrt{2\lambda \mu L+2\lambda c}}{\lambda}.
\end{equation}
Notice that $|v(x)|\le\ \alpha(x)$ is  satisfied for all $x\in(0,L)$. Finally, we need $0<x_{0}$. This holds if and only if
\begin{equation}\label{cond_sol_1}
\mu L+ c<\frac{\lambda L^{2}}{2}.
\end{equation}

Next we examine the case $x_{0}=0$, that is the solution is constant in $(0,L)$ but nonetheless has  a jump discontinuity at $x=0$. Hence, our ansatz here is
\[u(x)=M, \quad x\in (0,L), \quad M>0.\] 
Now the variable $v$ is of the form
\[v(x)=\frac{1}{2}\lambda x^{2} -Mx+d,\quad x\in (0,L).\]
From the fact that $u$ has a jump discontinuity at $x=0$ (note that the jump must be positive) and  $v\in H_{0}^{1}(\om)$, we infer
\[
v(0)=-\alpha(0),\quad
\lim_{x\to L}v(x)=0,
\]
which in turn give
$M=\frac{\lambda L}{2}-\frac{c}{L}$.
Since we require $M>0$ we must have
\begin{equation}\label{cond_sol_2a}
c<\frac{\lambda L^{2}}{2}.
\end{equation}
Finally, in order to guarantee that $|v(x)|\le\ \alpha(x)$ for all $x$, it suffices to enforce $v'(0)\ge-\mu$, i.e.,
\begin{equation}\label{cond_sol_2b}
\mu L+c\ge \frac{\lambda L^{2}}{2}.
\end{equation}
Note that the inequalities \eqref{cond_sol_2a}--\eqref{cond_sol_2b} form necessary and sufficient conditions for the occurrence of this kind of solution.

The last alternative for the solution is to be constant, equal to the mean value of the data, i.e., $u=0$. Working similarly as in the previous cases we deduce that this solution occurs if and only if 
\begin{equation}\label{cond_sol_3}
c\ge \frac{\lambda L^{2}}{2}.
\end{equation}

Note that the conditions \eqref{cond_sol_1}, \eqref{cond_sol_2a}--\eqref{cond_sol_2b} and \eqref{cond_sol_3} define a partition of the quadrant $\{\mu\ge 0,\;c\ge 0\}$.
 We summarise our findings in the following proposition.

\newtheorem{exact_affine_abs}[alpha_sgn]{Proposition}
\begin{exact_affine_abs}\label{lbl:exact_affine_abs}
Let $\Omega=(-L,L)$, $f(x)=\lambda x$ and $\alpha(x)=\mu|x|+c$ with $L,\lambda, \mu,c>0$. Then the solution $u$ of the problem
\[\min_{u\in\bv(\om)} \frac{1}{2} \int_{\om} (f-u)^{2}dx+\int_{\om}\alpha(x)d|Du|,\]
is given by the following formulae:
\begin{alignat*}{3}
&\text{If}\quad \mu L+c<\frac{\lambda L^{2}}{2},&\qquad
&u(x)=
\begin{cases}
-\lambda x_{\mu,c}-\mu,& \text{if}\quad x\in (-L,-x_{\mu,c}],\\
-\lambda x-\mu,		  &  \text{if}\quad x\in (-x_{\mu,c},0),\\
\lambda x +\mu, 		  &  \text{if}\quad x\in [0,x_{\mu,c}),\\
\lambda x_{\mu,c}+\mu,& \text{if}\quad x\in [x_{\mu,c},L),
\end{cases}\\
&\text{where}\quad x_{\mu,c}=L-\frac{\sqrt{2\lambda \mu L+2\lambda c}}{\lambda}.\\
&\text{If}\quad \mu L+c\ge\frac{\lambda L^{2}}{2} \; \& \; c<\frac{\lambda L^{2}}{2}, &\quad
&u(x)=
\begin{cases}
-\frac{\lambda L}{2} +\frac{c}{L}, & \text{if}\quad x\in (-L,0),\\
\frac{\lambda L}{2} -\frac{c}{L},& \text{if}\quad x\in [0,L).
\end{cases}\\
&\text{If}\quad c\ge\frac{\lambda L^{2}}{2},&\quad
&u(x)=0,\quad x\in (-L,L).
\end{alignat*}
\end{exact_affine_abs}

\begin{figure}[t!]
\begin{center}
\includegraphics[width=0.7\textwidth]{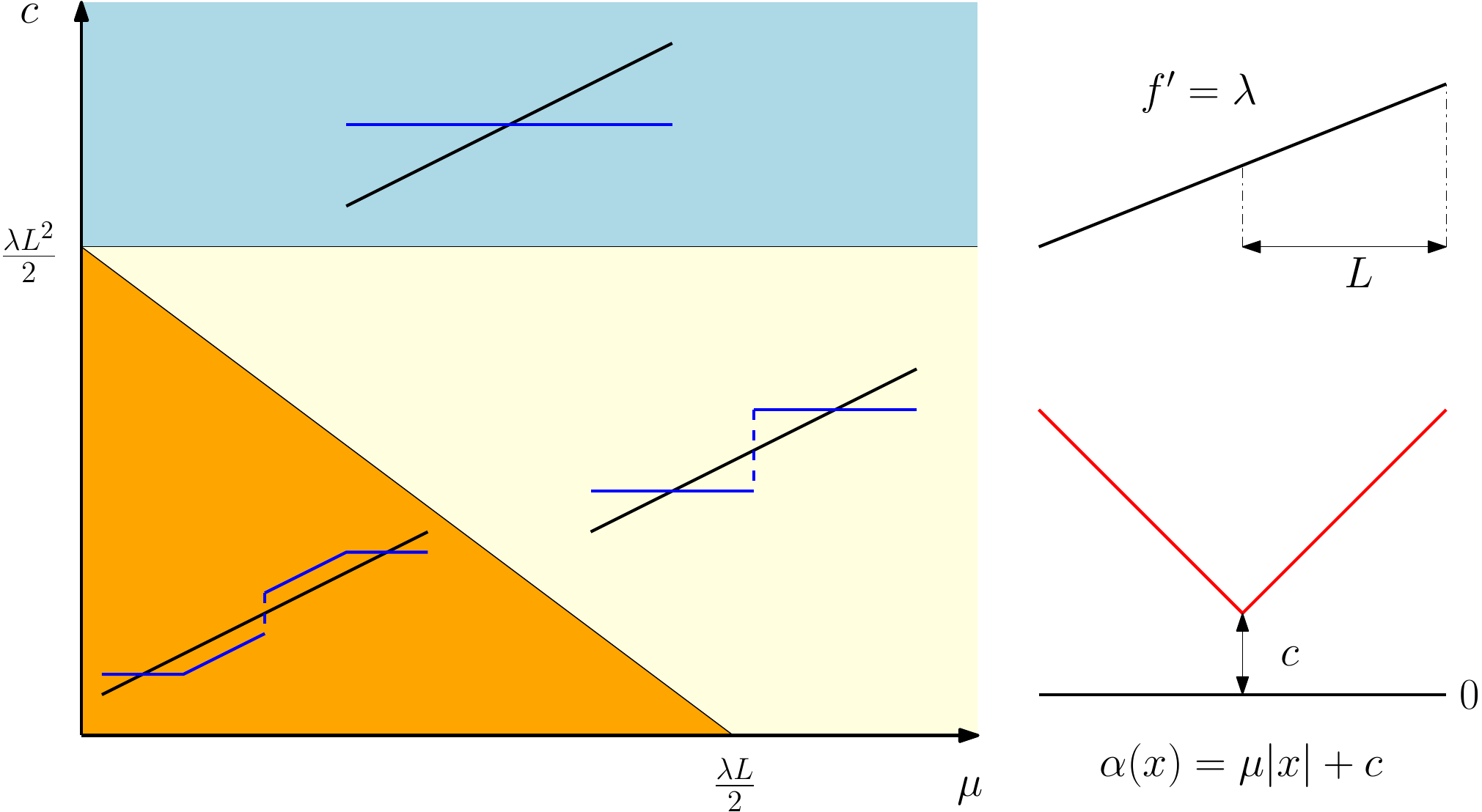}
\caption{Different types of solutions of the weighted $\mathrm{TV}$ minimisation problem \eqref{weighted_rof} with data $f(x)=\lambda x$ in $(-L,L)$ and weight function $\alpha(x)=\mu|x|+c$}
\label{fig:affine_abs}
\end{center}
\end{figure}

In Figure \ref{fig:affine_abs} we depict the different areas of the parameter space of the absolute type weight function, $\{\mu\ge 0,\;c\ge 0\}$ that correspond to the different types of solutions.

\begin{figure}[t!]
\begin{center}
\begin{subfigure}[t]{0.48\textwidth}
	\centering
	\includegraphics[width=0.98\textwidth]{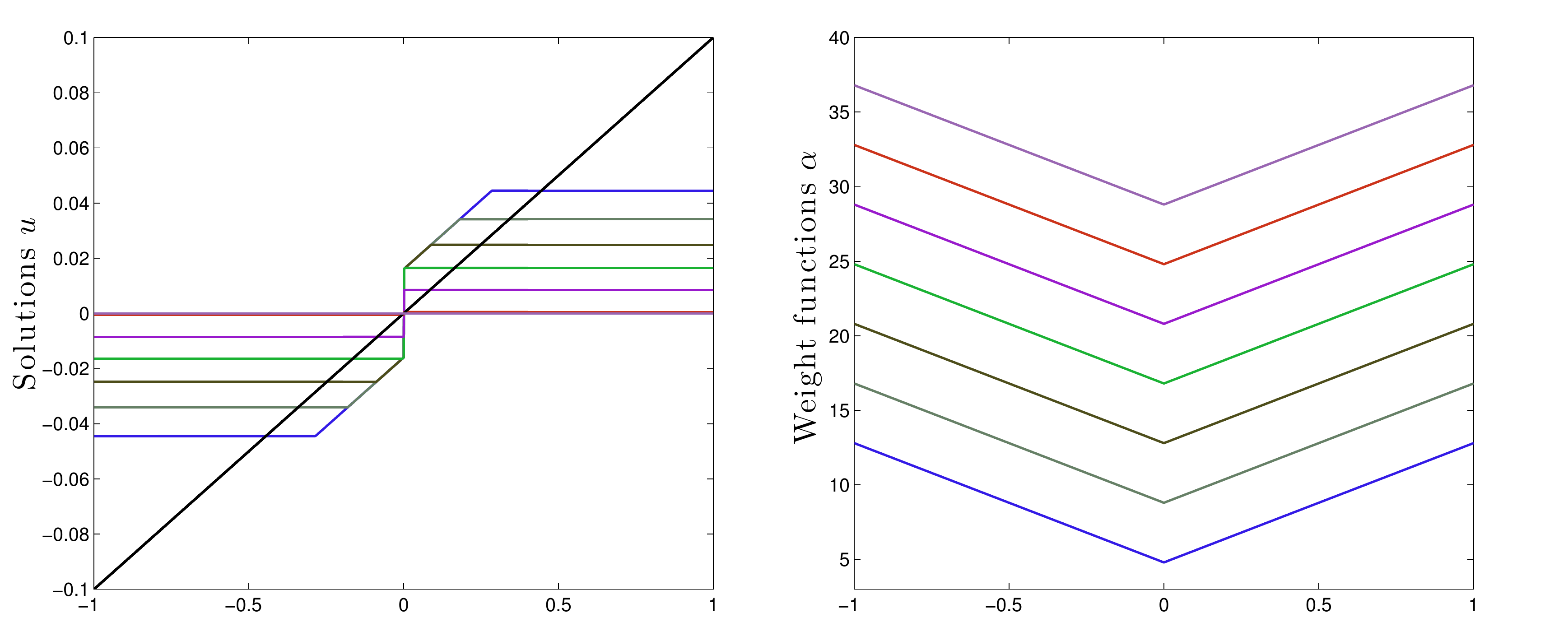}
	\caption{Varying $c$ by keeping $\mu$ fixed}
	\label{fig:increasing_c}
\end{subfigure}
\begin{subfigure}[t]{0.48\textwidth}
	\centering
	\includegraphics[width=0.98\textwidth]{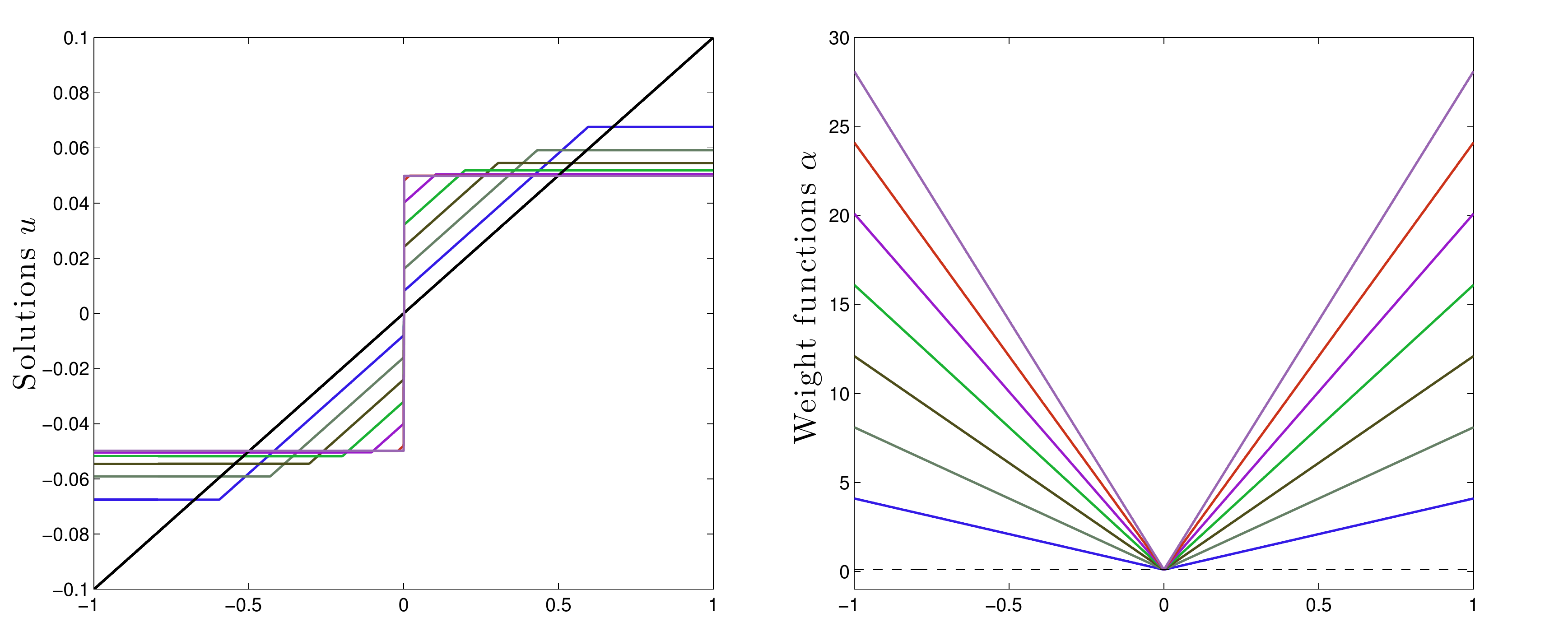}
	\caption{Varying $\mu$ by keeping $c$ fixed}
	\label{fig:increasing_mu}
\end{subfigure}
\caption{Numerical solutions of the weighted $\tv$ minimisation \eqref{weighted_rof} with data $f(x)=\lambda x$ in $(-L,L)$ and weight functions of the type $\alpha(x)=\mu|x|+c$}
\label{fig:affine_increasing}
\end{center}
\end{figure}

In Figure \ref{fig:affine_increasing} we summarise  numerical examples to better understand the solution dependence on the weight parameters $c$ and $\mu$. In Figure \ref{fig:increasing_c},   keeping $\mu$ fixed, we vary the parameter $c$ by adding constants. Among others, one can observe here the numerical verification of the partial semigroup property of Proposition \ref{lbl:semigroup}. The first weight (light blue line) produces a result, say $u_{1}$, that has a new discontinuity at the origin. As we increase the weight function by a constant the solutions we obtain are the corresponding solutions of the  scalar $\tv$ problem with data $u_{1}$ and the parameter being the very constant.   

In Figure \ref{fig:increasing_mu}, by keeping $c$ fixed, we vary the parameter $\mu$ by increasing the slope of the weight function. Notice that as the slope increases, i.e., $D\alpha'(\{0\})$ increases, so does the jump discontinuity of the solution $u$. This is in accordance with Proposition \ref{lbl:lipschitz_alpha}, i.e., we have $Du(\{x\})=D\alpha'(\{0\})$, since at least when $\mu$ is not too large, a whole neighbourhood of $x=0$ belongs to $\mathrm{supp}(|Du|)$. This behaviour stops when $\mu$, and thus $\alpha'$, becomes too large (light purple line). Then the jump size of the solutions stops growing, no matter the size of $\mu$ and it depends only on the value of $c$, cf. Propositions \ref{lbl:large_gradient_plateau} and \ref{lbl:exact_affine_abs}. 

\section{Bound of the total variation of the solution by the total variation of the data}\label{sec:boundTV}

It is a standard result in the scalar total variation minimisation that the total variation of the solution is bounded by the total variation of data, i.e., if $\alpha$ is a positive constant and $u$ solves 
\[\min_{u\in\bv(\om)}\frac{1}{2}\int_{\om}(f-u)^{2}dx+\alpha |Du|(\om),\]
then
\begin{equation}\label{tvu_tvf}
|Du|(\om)\le |Df|(\om).
\end{equation}
This is simply proven by comparing the energy of the minimiser $u$ and the data $f$, that is
\[\alpha |Du|(\om)\le \frac{1}{2}\int_{\om}(f-u)^{2}dx+\alpha |Du|(\om)\le \alpha |Df|(\om).\]
Note that this holds for any dimension.
A similar argument for the weighted case would only give
\begin{equation}\label{tvu_tvf_weighted}
\int_{\om}\alpha(x)d|Du|\le \int_{\om}\alpha(x)d|Df| \quad \Rightarrow\quad |Du|(\om)\le \frac{\max_{x\in\om} \alpha(x)}{\min_{x\in\om}\alpha(x)} |Df|(\om).
\end{equation}
However, the estimate in \eqref{tvu_tvf_weighted} is not satisfactory since the constant in the right-hand side depends on the weight $\alpha$ and in fact blows up as $\alpha$ tends to zero. In this section, we show that the estimate \eqref{tvu_tvf} holds in the weighted case as well. At first glance, \eqref{tvu_tvf} is counterintuitive as weighted $\tv$ may create new discontinuities, thus, increasing the variation locally. Our proof here uses fine scale analysis of the structure of solutions of the weighted $\tv$ problem and thus we are only able to show this result in dimension one.

We start by showing that weighted $\tv$ does not introduce oscillations in the solution $u$.

\newtheorem{few_oscillations}[alpha_sgn]{Proposition}
\begin{few_oscillations}\label{lbl:few_oscillations}
Let $u\in\bv(\om)$ solve the weighted $\tv$ problem \eqref{weighted_rof} with data $f\in \bv(\om)$ and a differentiable weight function $\alpha\in C(\overline{\om})$ with $\alpha>0$.  Then $\{\underline{u}>\underline{f}\}\cap\{ \overline{u}>\overline{f}\}$ is an open set and hence it is the countable union of disjoint intervals $I_{n}$. In each one of these intervals $I_{n}$, the solution $u$ has at most one initial decreasing part, followed by at most one increasing part. The analogous result holds for the set $\{\underline{u}<\underline{f}\}\cap\{ \overline{u}<\overline{f}\}$. There the solution $u$ has at most one initial increasing part, followed by an at most one decreasing part.
\end{few_oscillations}

\begin{proof}
Observe first that
\begin{equation}\label{few_oscillations_set_equi}
\{\underline{u}>\underline{f}\}\cap\{ \overline{u}>\overline{f}\}=\{\underline{u}>\overline{f}\}\cup \{\underline{f}<\underline{u}\le \overline{f}<\overline{u}\}.
\end{equation}

The fact that the set $\{\underline{u}>\overline{f}\}$ is open is shown in \cite{BrediesL1}. Let  now $x\in\{\underline{f}<\underline{u}\le \overline{f}<\overline{u}\}$ and suppose without loss of generality that $Df(\{x\})>0$. Then, $Du(\{x\})>0$ as well, since $\alpha$ is differentiable, cf. Proposition \ref{lbl:correct_jump}. This means that
\[f^{l}(x)<u^{l}(x)\le f^{r}(x)<u^{r}(x).\]
Bearing in mind that $f^{l}(t)=c_{1}+Df((a,t))$, $u^{l}(t)=c_{2}+Du((a,t))$, from the left continuity of $f^{l}$ and $u^{l}$ it follows that there exists an $\epsilon>0$ and two real numbers $m<M$ such that
\begin{equation}\label{left_above}
c_{1}+Df((a,t))<m<M<c_{2}+Du((a,t)),\quad \text{for all } t\in (x-\epsilon,x).
\end{equation}
But since $Df((\alpha,t_{0}])=\lim_{t\to t_{0}^{+}}Df((\alpha,t))$, and analogously for $u$, we have that
\begin{equation}\label{right_above}
c_{1}+Df((a,t])\le m<M \le c_{2}+Du((a,t]),\quad \text{for all } t\in (x-\epsilon,x),
\end{equation}
also holds and hence every $t$ in $(x-\epsilon,x)$ also belongs to  $\{\underline{u}>\overline{f}\}$. Similarly we show that there exists an $\epsilon>0$ such that every $t$ in $(x,x+\epsilon)$ also belongs to  $\{\underline{u}>\overline{f}\}$ and hence  from \eqref{few_oscillations_set_equi} the set $\{\underline{u}>\underline{f}\}\cap\{ \overline{u}>\overline{f}\}$ is open. Thus it can be written as a countable union of disjoint intervals $I_{n}$.

 We focus on a single interval $I_{n}$. From the optimality conditions \eqref{opt1}--\eqref{opt2} we have that $v'<0$ almost everywhere in $I_{n}$ and thus $v$ is strictly decreasing there. Suppose now that $u$ has an increasing part in $I_{n}$ followed by a decreasing part. Then there would exist points $x_{1}$ and $x_{2}$ in $I_{n}$ with $x_{1}<x_{2}$ and
\[\sgn(Du)(x_{1})=1\quad \text{and}\quad \sgn(Du)(x_{2})=-1.\]
However, again from the optimality conditions one obtains
\[v(x_{1})=-\alpha(x_{1})<0\quad \text{and} \quad v(x_{2})=\alpha(x_{2})>0,\]
which is a contradiction since $v$ is decreasing in $(x_{1},x_{2})\subseteq I_{n}$.

\end{proof}

We are now ready to prove the main theorem of the section. Note that we will assume for the time being that the weight function $\alpha$ is differentiable. In Section \ref{sec:alpha_zero} we will extend this result to continuous weights $\alpha$ via a $\Gamma$-convergence argument.

\newtheorem{TVu_less_TVf}[alpha_sgn]{Theorem}
\begin{TVu_less_TVf}[$|Du|(\om)\le |Df|(\om)$, for differentiable $\alpha$]\label{lbl:TVu_less_TVf}
Let $\om=(a,b)\subseteq\RR$, $\alpha\in C(\overline{\om})$ differentiable with $\alpha>0$ and $f\in\bv(\om)$. If
\[u=\underset{u\in\bv(\om)}{\operatorname{argmin}}\; \frac{1}{2}\int_{\om}(f-u)^{2}dx+\int_{\om}\alpha(x)d|Du|,\]
then 
\[|Du|(\om)\le |Df|(\om).\]
\end{TVu_less_TVf}

\begin{proof}

Since the weight function $\alpha$ is differentiable we have that, according to Proposition \ref{lbl:jump_set_incl}, the jump discontinuities of the solution $u$ are contained in the ones of the data $f$. Moreover, $\{\underline{u}\le \underline{f}<\overline{f}<\overline{u}\}\cup\{\underline{u}< \underline{f}<\overline{f}\le\overline{u}\} $ is empty; cf. Proposition \ref{lbl:correct_jump}. As $J_{f}$ is at most countable we can write
\begin{align}
|Du|(\om)&=|Du|(\{\underline{u}>\underline{f}\}\cap \{\overline{u}>\overline{f}\})+|Du|(\{\underline{u}<\underline{f}\}\cap \{ \overline{u}<\overline{f}\})\nonumber\\
&\;\;\;+|Du|(\{\underline{f}\le \underline{u}<\overline{u}\le \overline{f}\})+|Du|(\{\underline{u}=\overline{u}=\underline{f}=\overline{f}\}).\label{tv_break_down}
\end{align}
Notice that the sets that appear in \eqref{tv_break_down} are disjoint. We focus first on the set $\{\underline{u}>\underline{f}\}\cap \{\overline{u}>\overline{f}\}$. According to Proposition \ref{lbl:few_oscillations}, this can be written as a countable  union of disjoint open intervals. Let $(x_{1},x_{2})$ be one of these intervals and assume for the moment that $a<x_{1}<x_{2}<b$. Then we have that $(x_{1},x_{2})$ is maximal in the sense that in the endpoints $x_{1}$ and $x_{2}$, at least one of the conditions $\underline{u}(x)>\underline{f}(x)$, $\overline{u}(x)>\overline{f}(x)$ does not hold. Recall from Proposition \ref{lbl:few_oscillations} that in this interval, $u$  starts with a decreasing part followed by an increasing one (both not necessarily strict). In what follows, we will infer bounds on $|Du|((x_{1},x_{2}))$ by considering different alternative cases.

(i) Consider first that $f$, and hence $u$, is continuous at both endpoints $x_{1}$ and $x_{2}$. From the maximality of the associated interval we have
\[\underline{u}(x_{1})=\overline{u}(x_{1})=\underline{f}(x_{1})=\overline{f}(x_{1})\quad \text{and}\quad\underline{u}(x_{2})=\overline{u}(x_{2})=\underline{f}(x_{2})=\overline{f}(x_{2}).\]
Then from the monotonicity structure of $u$ and the fact that in this interval $\essinf_{t\in(x_{1},x_{2})} f(t)<\essinf_{t\in(x_{1},x_{2})} u(t)$ we have that
\[|Du|((x_{1},x_{2}))\le |Df|((x_{1},x_{2})).\]

(ii) The second case is that $f$ is continuous at $x_{1}$ and has a jump discontinuity at $x_{2}$. Note that this jump must be a positive one as otherwise the condition $\underline{u}>\overline{f}$ would be violated inside the interval, unless we have $\overline{u}(x_{2})=\overline{f}(x_{2})$ in which case the estimate $|Du|((x_{1},x_{2}))\le |Df|((x_{1},x_{2}))$  holds. Notice also that due to the maximality of $x_{2}$ we cannot have 
$\underline{f}(x)<\underline{u}(x)\le \overline{f}(x)<\overline{u}(x)$.
So we must have
\begin{equation}\label{if_u_jump}
\underline{u}(x_{1})=\overline{u}(x_{1})=\underline{f}(x_{1})=\overline{f}(x_{1})\quad \text{and}\quad\underline{f}(x_{2})\le\underline{u}(x_{2})< \overline{u}(x_{2})\le\overline{f}(x_{2}),
\end{equation}
if $u$ has a jump in $x_{2}$, or 
\begin{equation}\label{if_u_notjump}
\underline{u}(x_{1})=\overline{u}(x_{1})=\underline{f}(x_{1})=\overline{f}(x_{1})\quad \text{and}\quad\underline{f}(x_{2})\le \underline{u}(x_{2})= \overline{u}(x_{2})\le \overline{f}(x_{2}),
\end{equation}
if $u$ does not have a jump in $x_{2}$. Suppose we have \eqref{if_u_jump}, then arguing similarly as before we have that
\[|Du|((x_{1},x_{2}))\le |Df|((x_{1},x_{2}))+\underline{u}(x_{2})-\underline{f}(x_{2}),\]
 i.e., in that case we need to add to the right-hand side also a part of the jump of $f$ at $x_{2}$, which is however below the jump of $u$. If we have \eqref{if_u_notjump} we estimate again
\[|Du|((x_{1},x_{2}))\le |Df|((x_{1},x_{2}))+u(x_{2})-\underline{f}(x_{2}).\]

(iii) The third case is that $f$ has a jump discontinuity at $x_{1}$ and it is continuous at $x_{2}$. This is treated analogously to the second case. 

(iv) The fourth case is that $f$ has jump discontinuities at both points $x_{1}$ and $x_{2}$. The only case the  jump of $f$ at $x_{1}$ is  positive is when $\overline{u}(x_{1})=\overline{f}(x_{1})$ and in that case we have
\[|Du|((x_{1},x_{2}))\le |Df|((x_{1},x_{2}))\quad \text{or} \quad |Du|((x_{1},x_{2}))\le |Df|((x_{1},x_{2}))+\underline{u}(x_{2})-\underline{f}(x_{2}),\]
depending on whether the jump of $f$ at $x_{2}$ is negative or positive. So we are left with the case where the jumps of $f$ at $x_{1}$ and $x_{2}$ are negative and positive, respectively. We then have
\begin{equation}\label{if_u_notjump2}
\underline{f}(x_{1})\le \underline{u}(x_{1})\le \overline{u}(x_{1})\le \overline{f}(x_{1})\quad \text{and} \quad  \underline{f}(x_{2})\le \underline{u}(x_{2})\le \overline{u}(x_{2})\le \overline{f}(x_{2}),
\end{equation}
where we estimate
\[|Du|((x_{1},x_{2}))\le |Df|((x_{1},x_{2}))+ (\underline{u}(x_{1})-\underline{f}(x_{1}))+(\underline{u}(x_{2})-\underline{f}(x_{2})).\]

Note that analogous estimates hold if $x_{1}=a$ or $x_{2}=b$.
By summing over the countable set of disjoint intervals related to $\{\underline{u}>\underline{f}\}\cap\{\overline{u}>\overline{f}\} $, the following inequality holds
\begin{equation}\label{ulessf_ineq}
|Du|(\{\underline{u}>\underline{f}\}\cap\{\overline{u}>\overline{f}\} )\le |Df|(\{\underline{u}>\underline{f}\}\cap\{\overline{u}>\overline{f}\} )+\sum_{x\in\{\underline{f}\le \underline{u}<\overline{u}\le \overline{f}\}} (\underline{u}(x)-\underline{f}(x)).
\end{equation}
This is also due to the fact that any term of the form $\underline{u}(x)-\underline{f}(x))$ appears in only one of the corresponding estimates for the intervals $I_{n}$. Similarly, for $\{\underline{u}<\underline{f}\}\cap \{ \overline{u}<\overline{f}\}$ we obtain
\begin{equation}\label{ulessf_ineq}
|Du|(\{\underline{u}<\underline{f}\}\cap \{ \overline{u}<\overline{f}\})\le |Df|(\{\underline{u}<\underline{f}\}\cap \{ \overline{u}<\overline{f}\})+\sum_{x\in \{\underline{f}\le \underline{u}<\overline{u}\le \overline{f}\}} (\overline{f}(x)-\overline{u}(x)).
\end{equation}
The final estimate then follows:
\begin{align*}
|Du|(\om)&= |Du|(\{\underline{u}>\underline{f}\}\cap\{\overline{u}>\overline{f}\})+|Du|(\{\underline{u}<\underline{f}\}\cap \{ \overline{u}<\overline{f}\})\\
&\;\;\;\;+ |Du|(\{\underline{f}\le \underline{u}<\overline{u}\le \overline{f}\})+|Du|(\{\underline{u}=\overline{u}=\underline{f}=\overline{f}\})\\
               &\le |Df|(\{\underline{u}>\underline{f}\}\cap\{\overline{u}>\overline{f}\})+|Df|(\{\underline{u}<\underline{f}\}\cap \{ \overline{u}<\overline{f}\})\\
               &\;\;\;\; +\sum_{x\in \{\underline{f}\le \underline{u}<\overline{u}\le \overline{f}\}} (\underline{u}(x)-\underline{f}(x))+\sum_{x\in \{\underline{f}\le \underline{u}<\overline{u}\le \overline{f}\}} (\overline{f}(x)-\overline{u}(x))+\sum_{x\in \{\underline{f}\le \underline{u}<\overline{u}\le \overline{f}\}} (\overline{u}(x)-\underline{u}(x))\\
               &\;\;\;\;+|Df|(\{\underline{u}=\overline{u}=\underline{f}=\overline{f}\})\\
               &\le |Df|(\{\underline{u}>\underline{f}\}\cap\{\overline{u}>\overline{f}\})+|Df|(\{\underline{u}<\underline{f}\}\cap \{ \overline{u}<\overline{f}\})\\
               &\;\;\;\; +\sum_{x\in \{\underline{f}\le \underline{u}<\overline{u}\le \overline{f}\}} (\overline{f}(x)-\underline{f}(x))\\
               &\;\;\;\;+ |Df|(\{\underline{u}=\overline{u}=\underline{f}=\overline{f}\})\\
               &\le |Df|(\om).
\end{align*}
Here we also use Lemma \ref{lbl:variation_equal_f_u} to infer
\[|Du|(\{\underline{u}=\overline{u}=\underline{f}=\overline{f}\})=|Df|(\{\underline{u}=\overline{u}=\underline{f}=\overline{f}\}).\]

\end{proof}

It remains to prove the following lemma that was used in the final estimate above.

\newtheorem{variations_equal_f_u}[alpha_sgn]{Lemma}
\begin{variations_equal_f_u}\label{lbl:variation_equal_f_u}
Let $\om=(a,b)$, $u,v\in \bv(\om)$ and let 
\[A=\left\{x\in \om:\; u,v \text{ are continuous at }x \text{ and }u(x)=v(x)\right\}.\]
Then
\[|Du|(A)=|Dv|(A).\]
\end{variations_equal_f_u}

\begin{proof}
Note first that $A$ can also be written as 
\[A=\left\{x\in \om:\; \underline{u}(x)=\overline{u}(x)=\underline{v}(x)=\overline{v}(x)\right\}.\]
We claim that $A$ is a $G_{\delta}$ set, i.e., a countable intersection of open sets. Indeed recall first, that the set of continuity points of a function is a $G_{\delta}$; see for instance \cite{Olmsted}. Therefore, the sets of continuity points of $\underline{u}$ and $\underline{v}$, i.e., $J_{u}^{c}$ and $J_{v}^{c}$, respectively, are $G_{\delta}$ sets. Note that $A$ can also be written as
\begin{equation}\label{A1}
A=J_{u}^{c}\cap J_{v}^{c}\cap \{x\in \om:\; \underline{u}(x)=\underline{v}(x)\}.
\end{equation}
Observe that $\underline{u}$, $\underline{v}$ as well as their difference $w=\underline{u}-\underline{v}$ are lower semicontinuous functions. In particular, for any $c\in\RR$ the sets $w^{-1}((c,\infty))$ and $w^{-1}((-\infty,c])$ are open and closed, respectively. Thus we have
\begin{align*}
\{x\in \om:\; \underline{u}(x)=\underline{v}(x)\}&=w^{-1}(\{0\})\\
						&= \underbrace{w^{-1} \left ((-\infty,0] \right )}_{\text{closed}}\cap \underbrace{\bigcap_{n\in\NN} w^{-1} \left(\left (-\frac{1}{n},\infty \right ) \right)}_{G_{\delta}},
\end{align*}
and hence the set $\{x\in \om:\; \underline{u}(x)=\underline{v}(x)\}$ is $G_{\delta}$ as an intersection of $G_{\delta}$ sets (recall here that in any metric space every closed set $F$ is $G_{\delta}$ since $F=\bigcap_{n=1}^{\infty}\{x:\;\mathrm{dist}(x,F)<\frac{1}{n}\}$). Hence from \eqref{A1} we have that $A$ is $G_{\delta}$ as well. Likewise, if $I\subseteq\om$ is an open interval then $I\cap A$ is also $G_{\delta}$.
It is now convenient to consider $I\cap A$ without its potential isolated points \footnote{ Here by isolated points, we mean that there exists $\epsilon>0$ such that $(\alpha_{n},\alpha_{n}+\epsilon)$ or $(\alpha_{n}-\epsilon,\alpha_{n})$  does not intersect $I\cap A$.}
 $\{a_{n}\}_{n\in\NN}$. 
  Thus we define
\[\tilde{A}=(I\cap A)\cap (\{a_{n}\}_{n\in\NN})^{c},\]
which remains a $G_{\delta}$ set as an intersection of two $G_\delta$ sets. Hence, we can write 
\[\tilde{A}=\bigcap_{n\in\NN}A_{n},\quad  A_{n}=\bigcup_{k\in\NN} I_{n}^{k},\]
where the sequence $(A_{n})_{n\in\NN}$ can be chosen to be decreasing and with each $\{I_{n}^{k}\}_{k\in\NN}$ being a disjoint family of open intervals.

We are now ready to proceed with the main part of the proof. For every $x_{1}, x_{2}\in \tilde{A}$ with $x_{1}<x_{2}$ we have the following equalities
\begin{align*}
Du((x_{1},x_{2}))=u(x_{2})-u(x_{1})=
v(x_{2})-v(x_{1})=Dv((x_{1},x_{2})).
\end{align*}
We fix an interval $I_{n}^{k}$ and claim that $Du(I_{n}^{k})=Dv(I_{n}^{k})$. Since we have removed all the isolated points from $A$ (left and right ones), we can assume, by potentially making $I_{n}^{k}$ smaller, that its endpoints can be approximated by points of $\tilde{A}$, i.e.,
\[I_{n}^{k}=\bigcup_{i\in\NN} (x_{1}^{i},x_{2}^{i}),\quad x_{1}^{i}, x_{2}^{i}\in \tilde{A},\quad\text{with}\quad (x_{1}^{i},x_{2}^{i})\subseteq (x_{1}^{i+1},x_{2}^{i+1}),\;\;i\in\NN.\]
This implies that 
\[Du(I_{n}^{k})=Du\left ( \bigcup_{i\in\NN} (x_{1}^{i},x_{2}^{i})\right )=\lim_{i\to\infty} Du((x_{1}^{i},x_{2}^{i}))=\lim_{i\to\infty} Dv((x_{1}^{i},x_{2}^{i}))=Dv\left ( \bigcup_{i\in\NN} (x_{1}^{i},x_{2}^{i})\right )=Dv(I_{n}^{k}).\]
It then follows that for every $n\in \NN$
\[Du(A_{n})=\sum_{k\in \NN}Du(I_{n}^{k})=\sum_{k\in \NN}Dv(I_{n}^{k})=Dv(A_{n}),\]
and from continuity of the measures $Du$ and $Dv$, since $(A_{n})_{n\in\NN}$ is decreasing, we have
\[Du(\tilde{A})=\lim_{n\to\infty} Du(A_{n})=\lim_{n\to\infty} Dv(A_{n})=Dv(\tilde{A}).\]
Finally from the fact that the isolated points of $I\cup A$, $\{a_{n}\}_{n\in\NN}$ are at most countable and $u,v$ are both continuous there, we have that 
\[Du\left (\bigcup_{n\in\NN}\{a_{n}\} \right )=Dv\left (\bigcup_{n\in\NN}\{a_{n}\} \right )=0,\]
 and hence
\[Du(I\cap A)=Du(\tilde{A})=Dv(\tilde{A})=Dv(I\cap A),\]
with this equality being true for any open interval $I\subseteq \om$. This implies that $|Du|(A)=|Dv|(A)$.
\end{proof}

\section{Weighted total variation  with vanishing weight function $\alpha$}\label{sec:alpha_zero}

In this section we are considering the problem \eqref{weighted_rof} in the case where the weight function $\alpha\in C(\overline{\om})$ can also have zero values, i.e., $\alpha\ge 0$. This is motivated by the fact that a vanishing weight function imposes locally no regularisation. In terms of image processing this is useful when one wants the reconstructed image to be equal to the data in some areas. Moreover, as we analyse in Section \ref{exact_recon}, a vanishing weight function can result to a better preservation of contrast and exact reconstruction of piecewise constant data.

\subsection{The well-posedness}\label{sec:wellposed_alpha_zero}
Concerning existence of solutions to \eqref{weighted_rof} with $\alpha\ge 0$, 
 note that a simple application of the direct method of calculus of variations  fails here,  due to the absence of coercivity of the objective functional. 
We overcome this complication by employing a $\Gamma$-convergence argument. First we focus on the lower semicontinuity of the weighted $\tv$ functional. 

\newtheorem{lsc_a0}[alpha_sgn]{Proposition}
\begin{lsc_a0}\label{lbl:lcs_a0}
Let $\alpha\in C(\overline{\om})$ with $\alpha\ge 0$. Then the map $u\mapsto \int_{\om}\alpha(x)d|Du|$ defined on $\bv(\om)$ is lower semicontinuous with respect to the weak$^{\ast}$ convergence in $\bv(\om)$. 
\end{lsc_a0}

\begin{proof}
Let $(u_{n})_{n\in\NN}$ be a sequence converging to $u$ weakly$^{\ast}$ in $\bv(\om)$. In particular,  $Du_{n}\to Du$ weakly$^\ast$ in the space of Radon measures, i.e.,
\[\lim_{n\to\infty} \int_{\om} v \, dDu_{n}\to \int_{\om} v\, dDu,\quad \text{for every }v\in C_{0}(\om). \]
This implies that
\[\lim_{n\to\infty} \int_{\om} v \alpha\, dDu_{n}\to \int_{\om} v\alpha\, dDu,\quad \text{for every }v\in C_{0}(\om), \]
since $v\alpha\in C_{0}(\om)$ for every $v\in C_{0}(\om)$. Thus, we also have $\alpha Du_{n}\to \alpha Du$ weakly$^\ast$ in the space of Radon measures. From the lower semicontinuity of the total variation map with respect to the weak$^{\ast}$ convergence in the space of Radon measures we get
\[\int_{\om}\alpha (x) d|Du|=\left | \alpha Du \right |(\om)\le \liminf_{n\to\infty} \left |\alpha Du_{n} \right |(\om)=\liminf_{n\to\infty} \int_{\om}\alpha(x) d|Du_{n}|.\]
 
\end{proof}

 We now proceed with  the following $\Gamma$-convergence result.

\newtheorem{gamma_conv_F}[alpha_sgn]{Proposition}
\begin{gamma_conv_F}\label{lbl:gamma_conv_F}
Let $\alpha\in C(\overline{\om})$ with $\alpha\ge 0$ and $(\alpha_{n})_{n\in\NN}\subseteq C(\overline{\om})$ being a decreasing sequence with $\alpha_{n}\ge 0$ for every $n\in\NN$ and $\alpha_{n}\to \alpha$, uniformly.
Defining $F,F_{n}:\bv(\om)\to \RR$, by
\[
F(u):= \int_{\om} \alpha(x) d|Du|,\quad
F_{n}(u):= \int_{\om}\alpha_{n}(x)d|Du|,
\]
then, the sequence $(F_{n})_{n\in\NN}$ $\Gamma$-converges to $F$ with the underlying topology being the topology of weak$^{\ast}$ convergence in $\bv(\om)$.
\end{gamma_conv_F}

\begin{proof}
It is straightforward to show that $(F_{n})_{n\in\NN}$ is a decreasing sequence which converges to $F$ pointwise since for every $u\in\bv(\om)$ we have
\[|F_{n}(u)-F(u)|\le \int_{\om} |\alpha_{n}(x)-\alpha(x)|d|Du|\le \|\alpha_{n}-\alpha\|_{\infty} |Du|(\om)\to 0\quad \text{as }n\to\infty.\]
Then the conclusion follows from \cite[Prop. 5.7]{dalmasogamma} and the fact that $F$  is  lower semicontinuous with respect to the weak$^{\ast}$ convergence in $\bv(\om)$.
\end{proof}

Note that the above results hold in arbitrary dimension. However, the proof of the  well-posedness of \eqref{weighted_rof} with vanishing weight $\alpha$, strongly relies on Theorem \ref{lbl:TVu_less_TVf} and, thus, it only holds in dimension one.

\newtheorem{well_alpha_zero}[alpha_sgn]{Theorem}
\begin{well_alpha_zero}\label{lbl:well_alpha_zero}
Let $\om=(a,b)$, $f\in\bv(\om)$ and $\alpha\in C(\overline{\om})$ with $\alpha\ge 0$. Then the minimisation problem
\[\min_{u\in\bv(\om)} J(u):=\frac{1}{2} \int_{\om} (f-u)^{2}dx +\int_{\om} \alpha(x)d|Du|,
\]
has a unique solution $u^{\ast}\in\bv(\om)$. Moreover if 
\[J_{n}(u):=\frac{1}{2} \int_{\om} (f-u)^{2}dx +\int_{\om} \alpha_{n}(x)d|Du|,\]
where $(\alpha_{n})_{n\in\NN}\subseteq C^{\infty}(\overline{\om})$ is a decreasing sequence, uniformly convergent to $\alpha$, with $\alpha_{n}>0$ for every $n\in\NN$, then
\[J(u^{\ast})=\lim_{n\to\infty} \min_{u\in\bv(\om)} J_{n}(u).\]
\end{well_alpha_zero}

\begin{proof}
Note first that since $\alpha$ is continuous, using a standard mollification argument, we can indeed construct a decreasing sequence $(\alpha_{n})_{n\in\NN}$ in $C^{\infty}(\overline{\om})$, uniformly convergent to $\alpha$, and $\alpha_{n}>0$ for every $n\in\NN$.
Using Proposition \ref{lbl:gamma_conv_F} and the fact that the map $u\mapsto \frac{1}{2}\int_{\om}(f-u)^{2}dx$ is lower semicontinuous with respect to the weak$^\ast$ convergence in $\bv(\om)$ we have \cite[Prop. 6.25]{dalmasogamma} that the sequence $(J_{n})_{n\in\NN}$ $\Gamma$-converges to $J$. Let now 
\[u_{n}=\underset{u\in\bv(\om)}{\operatorname{argmin}}\; J_{n}(u).\]
We then have 
\[\frac{1}{2} \int_{\om} (f-u_{n})^{2}dx\le \frac{1}{2} \int_{\om} (f-u_{n})^{2}dx +\int_{\om} \alpha_{n}(x)d|Du_{n}| \le \frac{1}{2}\int_{\om}f^{2}dx.\]
Thus the sequence $(u_{n})_{n\in\NN}$ is bounded in $L^{2}(\om)$ and hence also in $L^{1}(\om)$. Using now Theorem \ref{lbl:TVu_less_TVf} we have that $(u_{n})_{n\in\NN}$ is also bounded in $\bv(\om)$, i.e., there exists a positive constant $C$ such that
\[u_{n}\in K:=\{u\in \bv(\om):\; \|u\|_{\bv(\om)}\le C\}, \quad \text{for all } n\in\NN.\]
This means that  for every $n\in\NN$
\[\min_{u\in\bv(\om)} J_{n}(u)=\min_{u\in K} J_{n}(u),\]
witk $K$ being a weakly$^\ast$ sequentially compact set. Then the conclusions of the theorem follow straightforwardly from \cite[Thm. 7.4]{dalmasogamma} and the fact that the functional $J$ is strictly convex.

\end{proof}

We would like to finish this section with an extension of Theorem \ref{lbl:TVu_less_TVf} for continuous weight functions $\alpha$ that are also not necessarily bounded away from zero.

\newtheorem{TVu_less_TVf_2}[alpha_sgn]{Theorem}
\begin{TVu_less_TVf_2}[$|Du|(\om)\le |Df|(\om)$, for continuous $\alpha$]\label{lbl:TVu_less_TVf_2}
Let $\om=(a,b)\subseteq\RR$, $\alpha\in C(\overline{\om})$ with $\alpha\ge 0$ and $f\in\bv(\om)$. If
\[u=\underset{u\in\bv(\om)}{\operatorname{argmin}}\; \frac{1}{2}\int_{\om}(f-u)^{2}dx+\int_{\om}\alpha(x)d|Du|,\]
then 
\[|Du|(\om)\le |Df|(\om).\]
\end{TVu_less_TVf_2}

\begin{proof}
Define the weights $(\alpha_{n})_{n\in\NN}\in C^{\infty}(\overline{\om})$ and the functionals $J$, $(J_{n})_{n\in\NN}$ as in Theorem \ref{lbl:well_alpha_zero}. Let also $u$ and $(u_{n})_{n\in\NN}$ be the corresponding minimisers. Note that we have already shown in Theorem \ref{lbl:TVu_less_TVf} that
\begin{equation}\label{from_main_Thm}
|Du_{n}|(\om)\le |Df|(\om), \quad \text{for all }n\in\NN.
\end{equation}
Since the sequence $(J_{n})_{n\in\NN}$ $\Gamma$-converges to $J$, we have that any weak$^{\ast}$ cluster point of $(u_{n})_{n\in\NN}$ is equal to $u$  \cite[Cor. 7.20]{dalmasogamma}. This implies that the whole sequence $(u_{n})_{n\in\NN}$ converges to $u$ weakly$^{\ast}$ in $\bv(\om)$.
 Hence from the lower semicontinuity of total variation with respect to $L^{1}$ convergence and \eqref{from_main_Thm} we have
\[|Du|(\om)\le \liminf_{n\to\infty} |Du_{n}|(\om)\le |Df|(\om).\]

\end{proof}
Notice that in view of Theorem \ref{lbl:TVu_less_TVf_2}, Theorem \ref{lbl:well_alpha_zero} holds also in the case where $(\alpha_{n})_{n\in\NN}\subseteq C(\overline{\om})$.

\subsection{Application: Exact reconstruction of piecewise constant noisy data}\label{exact_recon}

It is well-known that scalar total variation regularisation is very efficient in recovering noisy piecewise constant functions. Nevertheless the reconstruction typically suffers from a loss of contrast. Techniques like Bregman iteration \cite{OBG} have been used to reduce this effect. In this section  we show that by choosing a suitable weight function $\alpha$, it is possible to avoid  this loss of contrast and, under some mild assumptions on the noise, to exactly recover the piecewise constant function. 

 In what follows, let $f_{0}$ be a piecewise constant function, i.e.,
\begin{equation}\label{f0}
f_{0}(x)=\sum_{i=1}^{N}f_{i}\mathcal{X}_{I_{i}}(x),\quad x\in\om,
\end{equation}
where $(I_{i})_{i=1}^{N}$ are disjoint  intervals with $\bigcup_{i=1}^{N}I_{i}=\om$ and $f_{i}\in\RR$ for every $i=1,\ldots, N$ with $f_{i}\ne f_{j}$ for $i\ne j$. Moreover let $\eta$ denote  an oscillatory  function that belongs to $\bv(\om)$ and satisfies
\begin{equation}\label{noise}
 \int_{I_{i}} \eta \,dx=0 \quad \text{ for all } i=1,\ldots,N.
\end{equation}

\newtheorem{exact_pc}[alpha_sgn]{Proposition}
\begin{exact_pc}\label{lbl:exact_pc}
 Let $f\in\bv(\om)$ with
\[f=f_{0}+\eta,\]
where $f_{0}$ and $\eta$ are as described above. Define $\alpha$ to be a continuous piecewise affine function such that
\begin{enumerate}
\item $\alpha(x_{i})=0$, where $x_{i}=\sup I_{i}$, $i=1,\ldots, N-1$.
\item $\alpha\ge 0$ and it is of the form $-\mu_{i}|x-c_{i}|+d_{i}$ in every interval $I_{i}$, with $\mu_{i}>2\|f\|_{\infty}$ for every $i=1,\ldots,N$. 
 \end{enumerate}
Then we have that
\begin{equation}\label{min_pc}
f_{0}=\underset{u\in\bv(\om)}{\operatorname{argmin}}\; \frac{1}{2}\int_{\om} (f-u)^{2}dx+\int_{\om} \alpha(x)d|Du|.
\end{equation}

\end{exact_pc}

\begin{proof}
 Consider first the corresponding minimisation problems with weights $\alpha_{n}:=\alpha+1/n$ and solutions $u_{n}$. Let $u$ be the solution of the weighted total variation minimisation with weight $\alpha$. We have to show that $u=f_{0}$. Since $|\alpha_{n}'(x)|>2\|f\|_{\infty}$ in every interval $I_{i}$, from Proposition \ref{lbl:large_gradient_plateau} in combination with Proposition \ref{lbl:lipschitz_alpha} we have that $u_{n}$ will be constant in the interior of every $I_{i}$. Moreover, the $\Gamma$-convergence argument from the proof of Theorem \ref{lbl:TVu_less_TVf_2} implies that $u_{n}\to u$ weakly$^{\ast}$ in $\bv(\om)$ and thus $u$ will be also constant in the interior of every $I_{i}$. From the fact that $\alpha(x_{i})=0$ we have that
\[\int_{\om}\alpha(x)d|Du|=0,\]
i.e.,
\begin{equation}\label{min_kernel}
u=\underset{\phi\in \mathrm{Ker}\int\alpha(x) d|D\cdot|}{\operatorname{argmin}} \frac{1}{2} \int_{\om } (f-\phi)^{2}dx,
\end{equation}
where 
\[\mathrm{Ker}\int\alpha(x) d|D\cdot|=\left\{u\in\bv(\om):\; u=\sum_{i=1}^{N}u_{i}\mathcal{X}_{I_{i}}(x),\;u_{i}\in \RR,\;i=1,\ldots,N \right\}.\]
It is then easy to see that the minimisation in \eqref{min_kernel} can be separated into $N$ minimisation problems each of which corresponds to an interval $I_{i}$. Consequently, the solution $u$ is of the form
\[u=\sum_{i=1}^{N}u_{i}\mathcal{X}_{I_{i}}(x),\]
where
\[u_{i}=\underset{c\in\RR}{\operatorname{argmin}}\; \frac{1}{2} \int_{I_{i} } (f-c)^{2}dx=\frac{1}{|I_{i}|}\int_{I_{i}}f\,dx=\frac{1}{|I_{i}|}\int_{I_{i}} f_{0}+\eta\,dx\overset{\eqref{noise}}{=}\frac{1}{|I_{i}|}\int_{I_{i}} f_{0}\,dx=f_{i},\]
and thus $u=f_{0}$.
\end{proof}

\begin{figure}[t!]
\begin{center}
\includegraphics[width=0.3\textwidth]{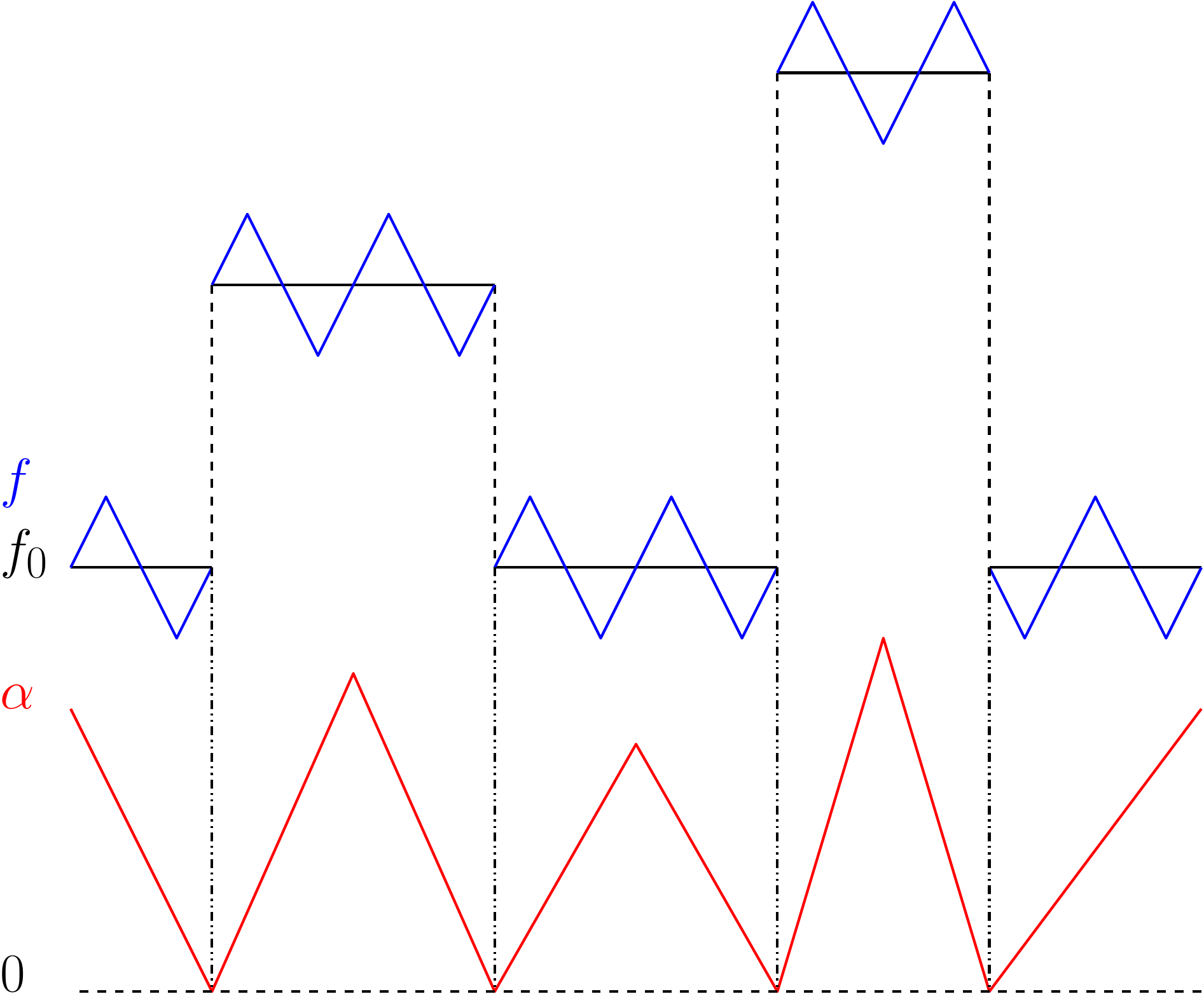}
\caption{Schematic explanation of Proposition \ref{lbl:exact_pc}. A noisy piecewise constant function can be recovered exactly using weighted $\tv$ regularisation under a suitable weight function $\alpha$ which vanishes exactly at the jump points of $f_{0}$ and has large gradient everywhere else}
\label{fig:pc}
\end{center}
\end{figure}

\begin{figure}[t!]
\begin{center}
\begin{subfigure}[t]{0.48\textwidth}
	\centering
	\captionsetup{width=.9\linewidth}
	\includegraphics[width=0.98\textwidth]{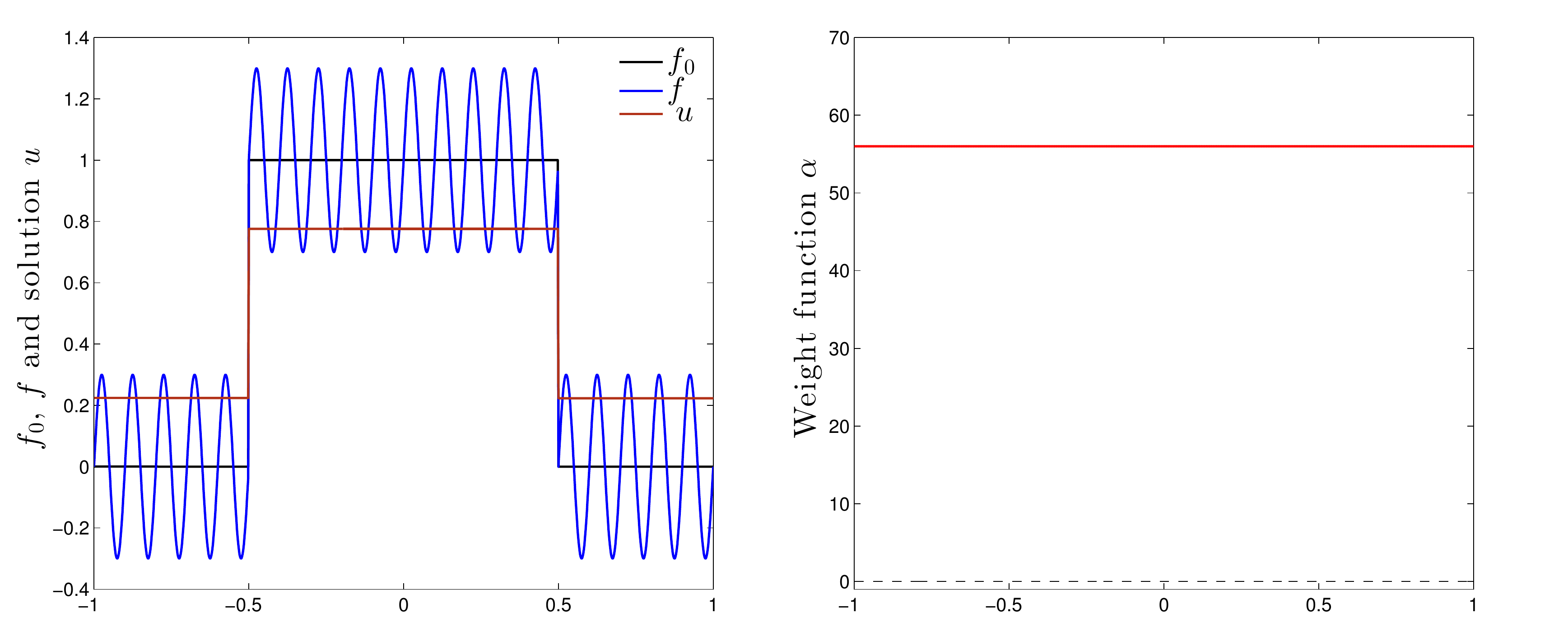}
	\caption{Scalar $\tv$ denoising}
	\label{fig:pc_numerics_scalar}
\end{subfigure}
\begin{subfigure}[t]{0.48\textwidth}
	\centering
	\captionsetup{width=.9\linewidth}
	\includegraphics[width=0.98\textwidth]{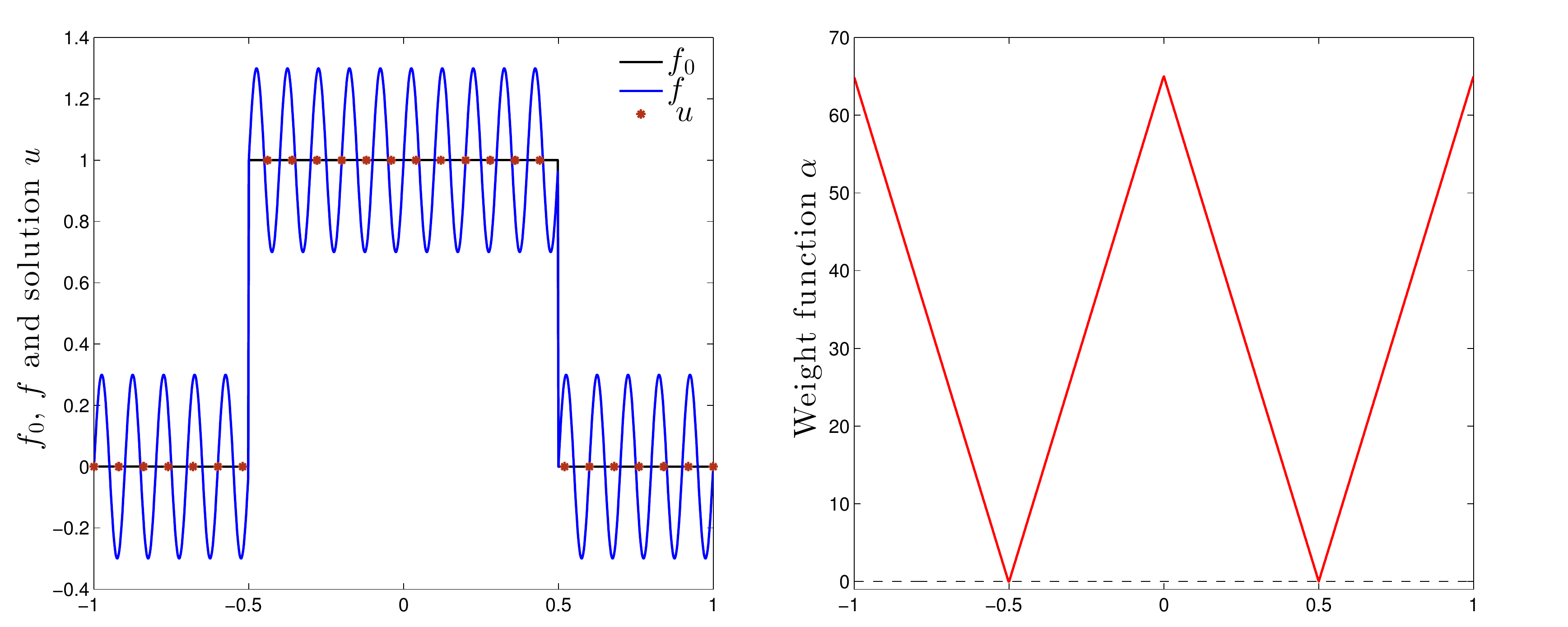}
	\caption{Weighted $\tv$ denoising}
	\label{fig:pc_numerics_spatial}
\end{subfigure}
\caption{Numerical examples: Unlike the scalar case, weighted $\tv$ can recover exactly noisy piecewise constant functions}
\label{fig:pc_numerics}
\end{center}
\end{figure}

We should remark here that this exact recovery of piecewise constant functions can be achieved for a whole family of weight functions $\alpha$ that are not necessarily of the form $-\mu_{i}|x-c_{i}|+d_{i}$ in every interval $I_{i}$. For instance any weight function $\alpha$ which is initially increasing and then decreasing in every interval $I_{i}$ with the property $|\alpha'(x)|>\|f\|_{\infty}$ (except at its maximum point in $I_{i}$) will also lead to an exact recovery of $f_{0}$.

Figure  \ref{fig:pc} depicts the result of Proposition  \ref{lbl:exact_pc}. Note that $\alpha$ must be zero at the jump points of $f$ so that these jumps are not penalised at all.  On the other hand, $\alpha$ should be steep enough so the remaining parts of the solution $u$ are constant according to  Proposition \ref{lbl:large_gradient_plateau}. We also provide some numerical examples in Figure \ref{fig:pc_numerics}. There, the data $f$ is a perturbation of a piecewise constant function $f_{0}$, under a $\sin$ type of noisy function that satisfies the mean value property \eqref{noise}. For the scalar $\tv$ denoising, we have chosen the smallest $\alpha$ such that we get a  piecewise constant solution. As expected, this results in  a significant loss of contrast, Figure \ref{fig:pc_numerics_scalar}. However, by applying weighted $\tv$ regularisation with a weight as it is described in Proposition \ref{lbl:exact_pc}, an exact recovery is achieved, Figure \ref{fig:pc_numerics_spatial}. 
 One might expect that since the values of $\alpha$ remain small close to the jump points, then the noise would be still present in that area. However, this is not true, since constant parts do not only result due to the large magnitude of $\alpha$ but also due to high values of $\alpha'$, Proposition \ref{lbl:large_gradient_plateau}.

\section{Total variation regularisation with weighted fidelity term}\label{sec:weight_fid}

As we have already mentioned in the introduction, one has two options when it comes to spatially adapted regularisation, i.e., introducing a weight function either in the regulariser or in the fidelity term. In this final section of the paper we would like to briefly examine the second case and in particular study the total variation regularisation problem with weighted fidelity term 
\begin{equation}\label{weight_fid}
\min_{u\in\bv(\om)} \frac{1}{2}\int_{\om}w(f-u)^{2}dx+|Du|(\om),
\end{equation}
where $\om=(a,b)$, $f\in L^{2}(\om)$ and $w\in L^{\infty}(\om)$ with $w\ge 0$. 

Existence of solutions for the problem \eqref{weight_fid} follows straightforwardly using standard methods. Note that the possibility of $w$ vanishing in some areas does not pose extra difficulties as in the weighted total variation case. It is clear, however, that the solution of \eqref{weight_fid} is not always unique. The solution will be unique if the operator $T_{w}:L^{2}(\om)\to L^{2}(\om)$ with
$T_{w}u(x)=\sqrt{w(x)}u(x)$,
is injective, i.e., when the set $\{w=0\}$ is of zero Lebesgue measure. 

First order optimality conditions for \eqref{weight_fid} are stated next, see, e.g., \cite{ring2000structural} for a proof.

\newtheorem{weight_fid_opt}[alpha_sgn]{Proposition}
\begin{weight_fid_opt}[\cite{ring2000structural}]\label{lbl:weight_fid_opt}
Let $\om=(a,b)$, $f\in L^{2}(\om)$ and $w\in L^{\infty}(\om)$ with $w\ge 0$. A function $u\in\bv(\om)$ is a solution of \eqref{weight_fid} if and only if there exists a function $v\in H_{0}^{1}(\om)$ such that
\begin{align}
v'&=w(f-u),\label{fid_opt1}\\
-v&\in \Sgn (Du). \label{fid_opt2}
\end{align}
\end{weight_fid_opt}

As it can be readily seen from \eqref{fid_opt1}--\eqref{fid_opt2}  and also shown in \cite{ring2000structural} we have that $Du=0$ when $f\ne u$, or more rigorously on the open sets $\{\underline{f}>\overline{u}\}$ and $\{\overline{f}<\underline{u}\}$, provided $w>0$ there. This is in strong contrast to the weighted $\tv$ case where this is not true, in general. Furthermore, unlike weighted $\tv$, no new discontinuities can be created with the one dimensional version of \eqref{weight_fid}. This is shown in the following proposition whose proof  is similar to the one dimensional scalar $\tv$ case. 

\newtheorem{weight_fid_discont}[alpha_sgn]{Proposition}
\begin{weight_fid_discont}\label{lbl:weight_fid_discont}
Let $\om=(a,b)$, $f\in \bv(\om)$ and $w\in L^{\infty}(\om)$ with $w\ge 0$. If $u$ is a solution to \eqref{weight_fid} then
\[J_{u}\cap \mathrm{supp}_{l,r}w\subseteq J_{f},\]
where 
\[\mathrm{supp}_{l,r}w=\left \{x\in \om:\; \text{for every }\epsilon>0,\;  w \text{ is not zero a.e. in each of the sets } (x-\epsilon,x) \text{ and } (x,x+\epsilon) \right \}.\]
In particular if $w>0$ a.e. then 
\[J_{u}\subseteq J_{f}.\]
\end{weight_fid_discont}
\begin{proof}
Let $x\in J_{u}\cap \mathrm{supp}_{l,r}w$ and suppose that $x\notin J_{f}$.  Without loss of generality, we can assume that $Du(\{x\})>0$. Then from condition \eqref{fid_opt2} we have $v(x)=-1$, where $v$ is the corresponding $H_{0}^{1}(\om)$ variable of Proposition \ref{lbl:weight_fid_opt}. 
Observe that since $u$ has a positive jump at $x$ and $f$ is continuous there we have that there exists $m>0$ and a sufficiently small $\epsilon>0$ such that
\[\text{either}\quad 0<m<\essinf_{t\in (x-\epsilon,x)}f(t)-u(t)\quad \text{or} \quad \esssup_{t\in (x,x+\epsilon)} f(t)-u(t)<-m<0.\]
It is also possible that both inequalities above hold. If the first inequality holds, then, in view of \eqref{fid_opt1} and the fact that $x\in \mathrm{supp}_{l,r}w$, we have that
\[v(x)-v(x-\delta)=\int_{x-\delta}^{\delta}v'(s)ds> 0,\quad \text{for all }\delta\in (0,\epsilon).\]
This implies that $v(x-\delta)<v(x)=-1$. If the second inequality holds then we have 
\[v(x+\delta)-v(x)=\int_{x}^{x+\delta} v'(s)ds<0,\quad \text{for all }\delta\in (0,\epsilon),\]
which implies that $v(x+\delta)<v(x)=-1$. Thus, in both cases we end up in a contradiction since we must have $|v|\le 1$ in $\om$ according to \eqref{fid_opt2}. 
\end{proof}

Next we study relations between \eqref{weighted_rof} and \eqref{weight_fid} by means of an explicit example, which offers a good insight into different structural properties of the associated solutions.

\newtheorem{affine_exact_fid}[alpha_sgn]{Proposition}
\begin{affine_exact_fid}\label{lbl:affine_exact_fid}
Let $\om=(-L,L)$, $f(x)=\lambda x$ with $\lambda>0$. Moreover let $w\in L^{\infty}(\om)$ with $w\ge 0$. Then, a solution to the problem 
\[\min_{u\in\bv(\om)} \frac{1}{2}\int_{\om}w(f-u)^{2}dx+|Du|(\om),\]
will be of the following form:
\begin{equation}\label{affine_1}
u(x)=
\begin{cases}
\lambda x_{1}, & \text{if}\quad x\in (-L,x_{1}),\\
\lambda x, & \text{if}\quad x\in [x_{1},x_{2}], \qquad \text{where} -L<x_{1}\le x_{2}<L.\\
\lambda x_{2}, & \text{if}\quad x\in (x_{2},L),
\end{cases}
\end{equation}
\end{affine_exact_fid}

\begin{proof}
We will show that for every weight function $w\in L^{\infty}(\om)$, a function $u$ of the type \eqref{affine_1}, will always satisfy the optimality conditions \eqref{fid_opt1}--\eqref{fid_opt2} together with an appropriate function $v\in H_{0}^{1}(\om)$. Since $v\in H_{0}^{1}(\om)$ and it satisfies \eqref{fid_opt1} we have
\[
v(x_{1})=\lambda \int_{-L}^{x_{1}} w(s) (s-x_{1})ds,\quad
v(x_{2})=-\lambda \int_{x_{2}}^{L} w(s)(s-x_{2})ds.
\]Notice that if we set
\[\phi_{1}(x):=\lambda \int_{-L}^{x} w(s) (s-x)ds, \quad 
\phi_{2}(x):=-\lambda \int_{x}^{L} w(s)(s-x)ds,\quad x\in [-L,L],
\]then we have that $\phi_{1}(-L)=\phi_{2}(L)=0$, $\phi_{1}$ and $\phi_{2}$ are respectively decreasing and increasing continuous functions, and hence there exists a point $\xi\in (-L,L)$ such that $\phi_{1}(\xi)=\phi_{2}(\xi)=k<0$.\\
\emph{Case I}: $k\le -1$. In that case we set 
\[x_{1}=\min\left\{x\in \om: \; \phi_{1}(x)=-1\right\}\quad \text{and}\quad x_{2}=\max\left\{x\in\om:\;\phi_{2}(x)=-1\right\},\]
and we have $-L<x_{1}<x_{2}<L$.
Then it is easy to check that 
\[
v(x)=
\begin{cases}
\phi_{1}(x),& \text{if}\quad x\in (-L,x_{1}),\\
-1,		     & \text{if}\quad x\in [x_{1},x_{2}],\\
\phi_{2}(x),& \text{if}\quad x\in (x_{2},L),
\end{cases}
\]
belongs to $H_{0}^{1}(\om)$ and satisfies the optimality conditions \eqref{fid_opt1}--\eqref{fid_opt2} together with the function $u$ defined in \eqref{affine_1}.\\
\emph{Case II}: $k> -1$.  In this case, we simply set
\[
v(x)=
\begin{cases}
\phi_{1}(x),& \text{if}\quad x\in (-L,\xi],\\
\phi_{2}(x),& \text{if}\quad x\in (\xi,L),
\end{cases}
\]
and the optimality conditions are satisfied with this function $v$ and the constant function $u=\lambda \xi$.
\end{proof}

As the proof above shows, if the weight function is small enough in the sense that either
 \begin{equation}\label{lbl:remark_constant_solution}
 \phi_{1}(L)\le -1 \quad \text{or} \quad \phi_{2}(-L)\le -1,
\end{equation} 
then the solution $u$ of \eqref{weight_fid} will be a constant.

\begin{figure}[t!]
\begin{center}
\includegraphics[width=0.7\textwidth]{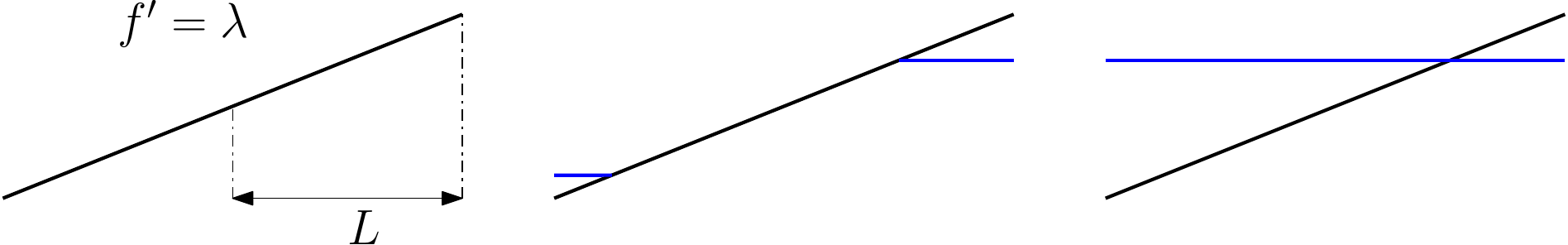}
\caption{Different types of solutions of the  total variation minimisation problem with weighted fidelity term \eqref{weight_fid} with data $f(x)=\lambda x$ in $(-L,L)$ and for any weight function $w$}
\label{fig:affine_fid}
\end{center}
\end{figure}

In Figure \ref{fig:affine_fid} we depict all these possible solutions. These should be compared to the ones of the weighted $\tv$ problem in Figure \ref{fig:affine_abs}. Note that when $x_{1}=-x_{2}$ (which can be achieved for instance using a symmetric weight function $w$), then the resulting solution also belongs to the set of solutions of the scalar total variation minimisation problem.

\begin{figure}[t!]
\begin{center}
\begin{subfigure}[t]{0.48\textwidth}
	\centering
	\captionsetup{width=.9\linewidth}
	\includegraphics[width=0.98\textwidth]{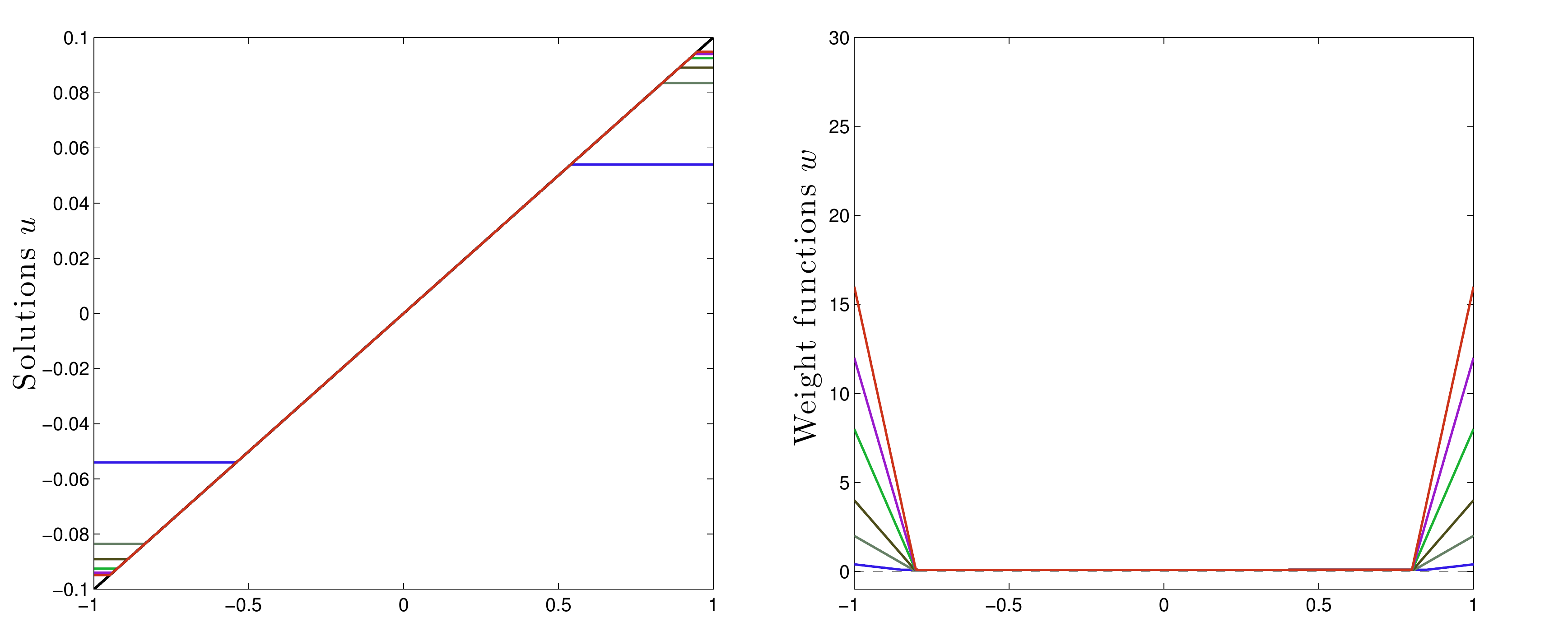}
	\caption{Weight functions $w$ with large values near the boundary}	\label{fig:w_boundary_mass}
\end{subfigure}
\begin{subfigure}[t]{0.48\textwidth}
	\centering
	\captionsetup{width=.9\linewidth}
	\includegraphics[width=0.98\textwidth]{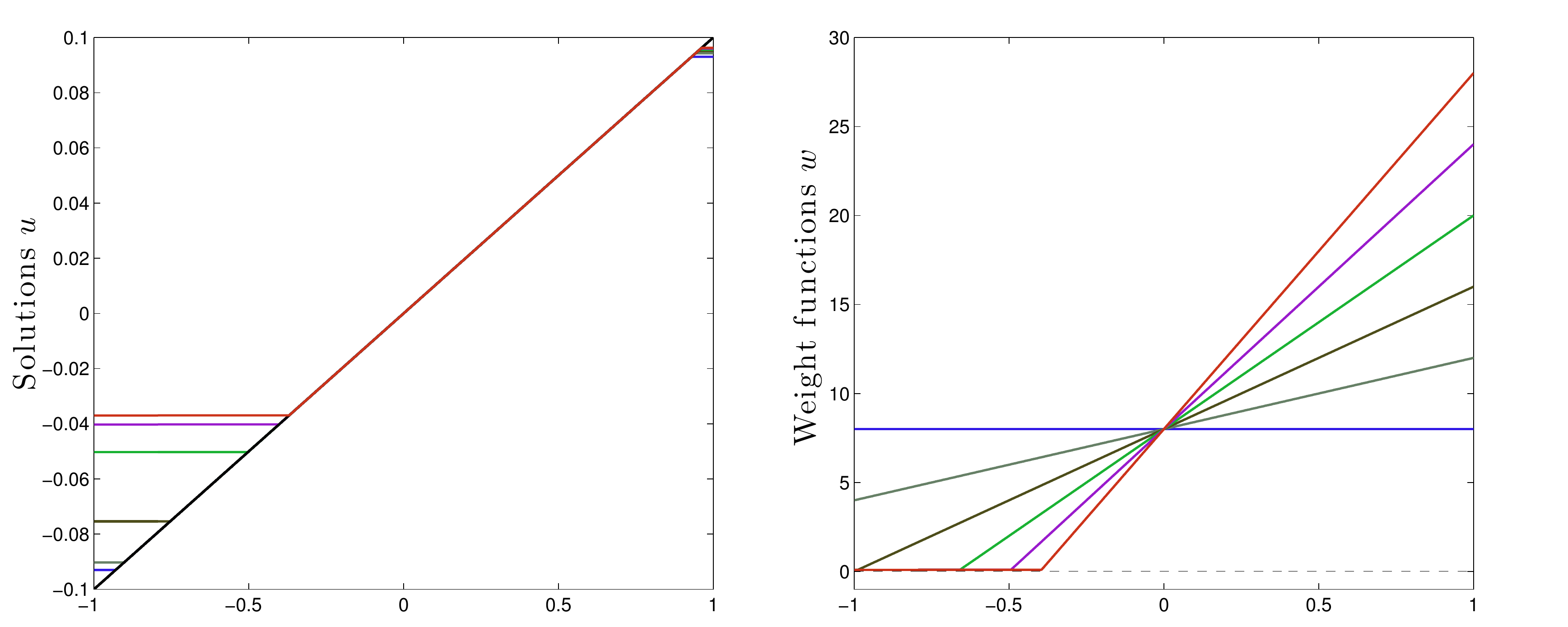}
	\caption{Linear weight functions $w$}
	\label{fig:w_lines}
\end{subfigure}

\begin{subfigure}[t]{0.48\textwidth}
	\centering
	\captionsetup{width=.9\linewidth}
	\includegraphics[width=0.98\textwidth]{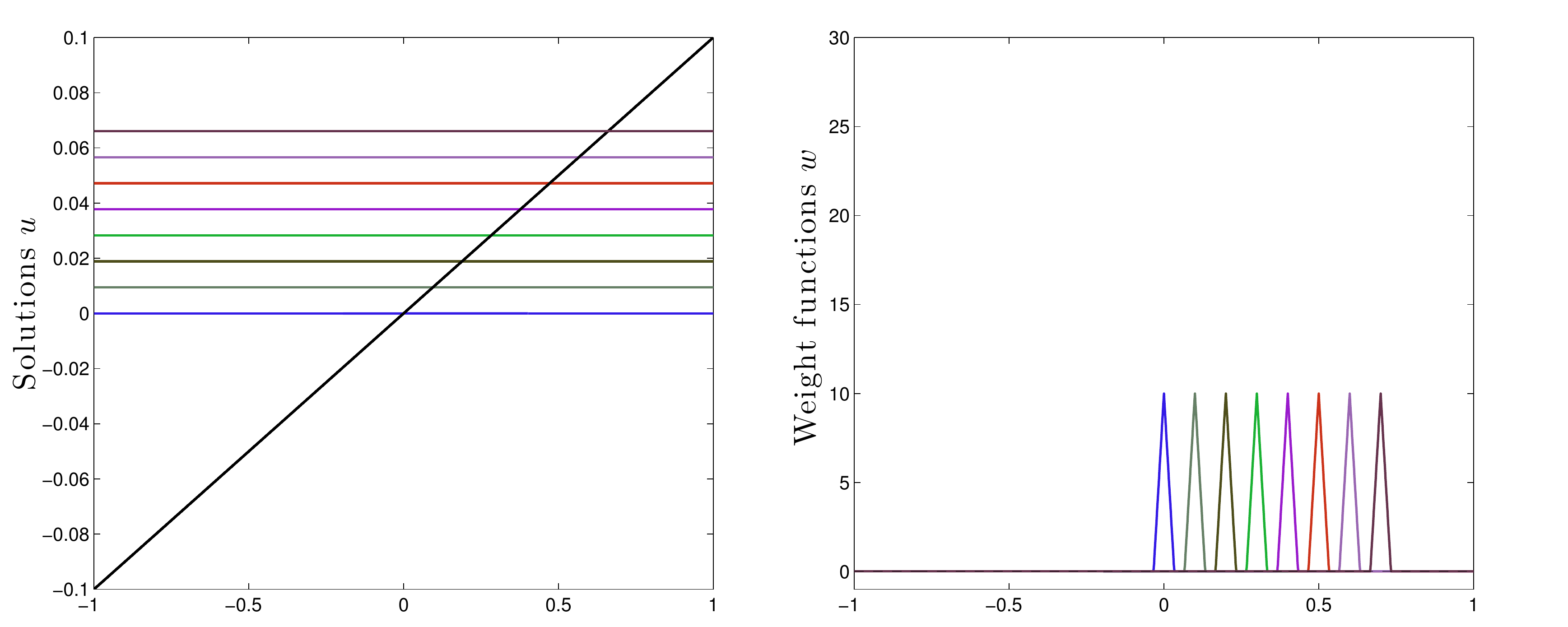}
	\caption{Weight functions $w$ of small mass}
		\label{fig:w_bumps}
\end{subfigure}
\begin{subfigure}[t]{0.48\textwidth}
	\centering
	\captionsetup{width=.9\linewidth}
	\includegraphics[width=0.98\textwidth]{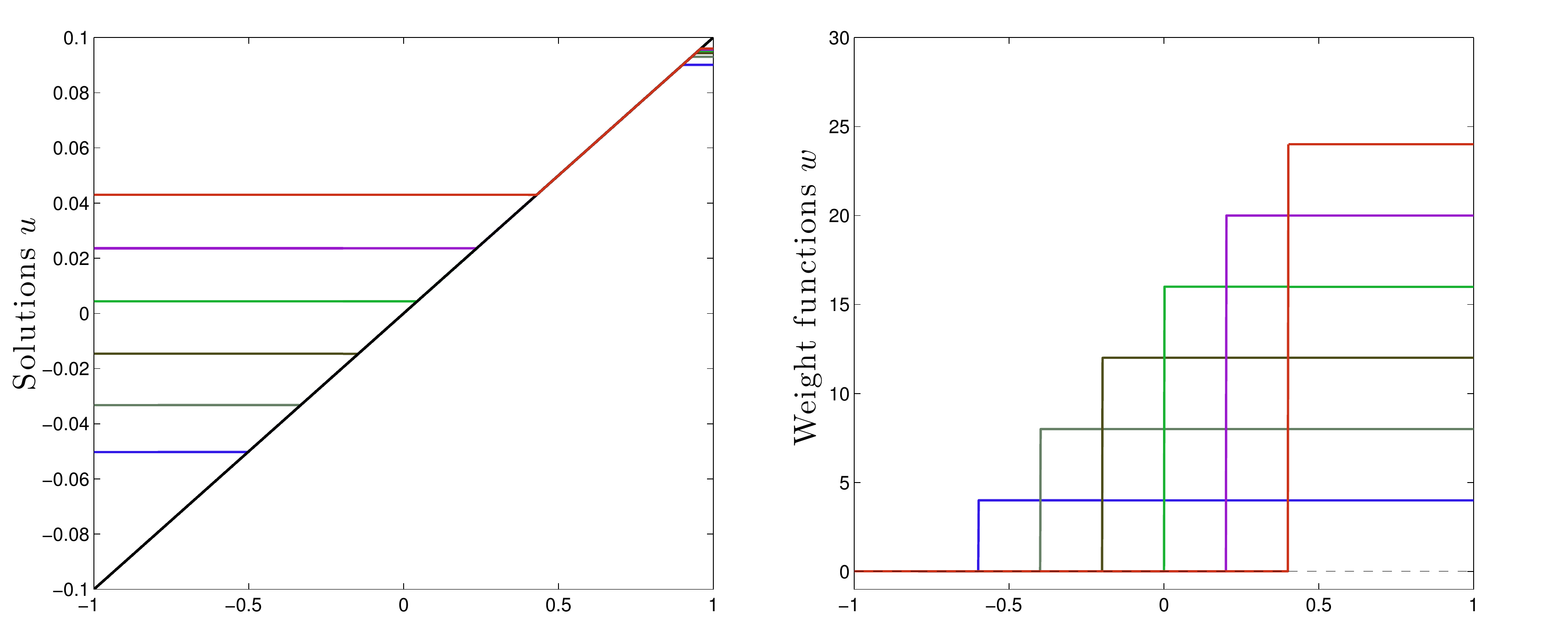}
	\caption{Step weight functions with mass concentrating at the one side of the boundary}	  		    
	\label{fig:w_step_functions}
\end{subfigure}
\caption{Numerical solutions of the problem \eqref{weight_fid} with data $f(x)=\lambda x$ in $(-L,L)$ for different types of weight functions $w$. In every case the solutions are of the form \eqref{affine_1}}
\label{fig:w}
\end{center}
\end{figure}

We have also performed some numerical simulations for the problem \eqref{weight_fid} with data $f(x)=\lambda x$; see Figure \ref{fig:w}. Note that, as predicted by the Proposition \ref{lbl:affine_exact_fid}, all the solutions are of the form \eqref{affine_1}. In order to avoid the non-uniqueness issues, we have set the minimum value of each weight function $w$ to be  a small positive constant. We briefly now comment on these results:
\begin{enumerate}[(a)]
\item In Figure \ref{fig:w_boundary_mass} we have chosen the weight functions to have a small value away  from the boundary of the domain, while increasing their values close to the boundary. Observe that the solutions $u$ converge to $f$ as the values of the weight functions near
 the boundary are increasing. Thus, for this example, in order to  recover $f$ almost perfectly, it suffices to choose $w$ to have large values only near the boundary.
\item In Figure \ref{fig:w_lines}, the weight functions have been chosen to be linear with increasing gradient. As expected, as the gradient of $w$ increases, resulting in large values close to the point $L$,  the solution $u$ approximates $f$ better, while the approximation becomes worse close to $-L$ where $w$ has small values.
\item In Figure \ref{fig:w_bumps}, all the weight functions $w$ have been chosen such that their mass is small and is shifted towards the boundary. In all the cases, the conditions \eqref{lbl:remark_constant_solution} are satisfied and hence constant solutions are obtained. 
\item Lastly, in Figure \ref{fig:w_step_functions}, we have chosen weight functions $w$ whose mass concentrates at the right side of the boundary of $\Omega$, observing a similar behaviour as in Figure \ref{fig:w_lines}.
\end{enumerate}

The following proposition states that at least for the affine data considered above, the weighted $\tv$ problem \eqref{weighted_rof} cannot produce those solutions of \eqref{weight_fid}  that are not symmetric with respect to the origin, i.e., the ones that cannot be obtained by scalar total variation regularisation. We note that this is regardless of the choice of the weight function $\alpha$.

\newtheorem{alpha_cannot}[alpha_sgn]{Proposition}
\begin{alpha_cannot}\label{lbl:alpha_cannot}
Let $\om=(-L,L)$, $\alpha\in C(\overline{\om})$, $\alpha>0$, $f(x)=\lambda x$, for every $x\in\om$, $\lambda>0$. Then the weighted total variation problem \eqref{weighted_rof} with data $f$ and weight $\alpha$ cannot have any solution $u$ of the type \eqref{affine_1} unless 
$x_{1}=-x_{2}$,
that is, unless the solution is symmetric with respect to the origin.
\end{alpha_cannot}

\begin{proof}
Suppose without loss of generality that there exists a weight function $\alpha\in C(\overline{\om})$ such that the solution of \eqref{weighted_rof} is a function of the type \eqref{affine_1}, where $x_{1}<-x_{2}$. From conditions \eqref{opt1}--\eqref{opt2} we have that this can only happen if $\alpha(x)=\alpha_{0}$ for every $x\in(x_{1},x_{2})$. Moreover, the dual function $v$ would have to be quadratic in the intervals $(0,x_{1}]$ and $[x_{2},L)$ satisfying the conditions $v(0)=0$, $v(x_{1})=-\alpha_{0}$, $v'(x_{1})=0$,  and $v(L)=0$, $v(x_{2})=-\alpha_{0}$, $v'(x_{2})=0$. One can easily see that such a $v$ cannot exist but note also that in that case we would have $-\alpha_{0}\le v(x)\le 0$ for every $x\in \om$, i.e., $u$ and $v$ would also satisfy the optimality conditions of standard  total variation minimisation with scalar parameter $\alpha_{0}$. Of course this is a contradiction since we know that any solution to the scalar parameter problem is not of that form.

\end{proof}

In contrast to the previous result, one can easily see that for the specific data function of Proposition \ref{lbl:alpha_cannot}, there are choices of non-constant weight functions $\alpha\in C(\overline{\om})$ and $w\in L^{\infty}(\om)$ such that the corresponding solutions of the problems \eqref{weighted_rof} and \eqref{weight_fid} are solutions of scalar total variation problems.  These are exactly the solutions of the type \eqref{affine_1} with $x_{1}=-x_{2}$. Figure \ref{fig:venn} visualises the relation between \eqref{weighted_rof} and \eqref{weight_fid}. 

\begin{figure}[h!]
\centering
\begin{tikzpicture}
\usetikzlibrary{shapes}
  \tikzset{venn circle/.style={draw,ellipse,minimum width=7cm, minimum height=3cm,fill=#1,opacity=0.4}}

  \node [venn circle = yellow] (A) at (-0.5,0) { };
  \node [venn circle = link] (C) at (0:3cm) {};
  \node[text width=2.8cm] at (barycentric cs:A=3/7,C=2/3 ) {\footnotesize{Set of solutions of  the  scalar TV problem } };  
    \node[text width=2.5cm] at (-1.7,0) {\footnotesize{Set of solutions of the weighted TV problem \eqref{weighted_rof}}}; 
        \node[text width=2.8cm] at (0:4.7cm) {\footnotesize{Set of solutions of the weighted fidelity problem \eqref{weight_fid}} };    
\end{tikzpicture} 
\caption{Data function $f(x)=\lambda x$: The solutions that can be obtained by both problems \eqref{weighted_rof} and \eqref{weight_fid} are exactly those that can be obtained by the classical scalar total variation minimisation.}
\label{fig:venn}
\end{figure}

In Section \ref{exact_recon} we examined the capability of the weighted $\tv$ model to recover exactly piecewise constant data, provided the weight function $\alpha$ vanishes at the jump points of the data. On the contrary, the weighted fidelity model \eqref{weight_fid} cannot recover piecewise constant functions even in the absence of noise, regardless of the weight function $w$. Indeed, it can be easily checked using the optimality conditions  \eqref{fid_opt1}--\eqref{fid_opt2} that only constant functions can be recovered exactly using \eqref{weight_fid}. However, as the next proposition shows, a  piecewise constant function can be recovered as a limit of a sequence of solutions of \eqref{weight_fid} using suitable weight functions, which have mass concentrating at the jump points of the data.

\newtheorem{exact_pc_w}[alpha_sgn]{Proposition}
\begin{exact_pc_w}\label{lbl:exact_pc_w}
Let $f_{0}\in\bv(\om)$ be a piecewise constant function as defined  in \eqref{f0} and let 
\[f=f_{0}+\eta,\] 
where $\eta$ is a continuous  function which vanishes at the jump points of $f_{0}$ (not necessarily satisfying \eqref{noise}). Define the sequence of weight functions 
\[w_{n}(x)=\sum_{i=1}^{N-1}n^{2}\mathcal{X}_{B(x_{i},\frac{1}{n})},\]
where $x_{i}$, $i=1,\ldots,N-1$ are the jump points of $f_{0}$.
Then for any sequence $(u_{n})_{n\in\NN}$ such that
\[u_{n}\in\underset{u\in\bv(\om)}{\operatorname{argmin}}\; \frac{1}{2}\int_{\om}w_{n}(f-u)^{2}dx+|Du|(\om),\]
we have 
\[\|f_{0}-u_{n}\|_{\infty}\to 0\quad \text{as }n\to\infty.\]
\end{exact_pc_w}

\begin{proof}
We can assume that $n$ is large enough so that the balls $B(x_{i},\frac{1}{n})$ are all inside $\om$. Note as well that since each $u_{n}$ is optimal, we have for every $n\in\NN$
\[\frac{1}{2}\int_{\om} w_{n}(f-u_{n})^{2}dx\le \frac{1}{2}\int_{\om} w_{n}(f-u_{n})^{2}dx+ |Du_{n}|(\om)\le |Df|(\om).\]
Using the above estimate, we find
\begin{align*}
\frac{1}{|B(x_{i},\frac{1}{n})|} \int_{B(x_{i},\frac{1}{n})} (f-u_{n})^{2}dx
&=  \frac{1}{n^{2}|B(x_{i},\frac{1}{n})|}  \int_{B(x_{i},\frac{1}{n})}    w_{n} (f-u_{n})^{2}dx
\le \frac{2n}{n^{2}} |Df|(\om),
\end{align*}
and hence 
\begin{equation}\label{lp_like}
\frac{1}{|B(x_{i},\frac{1}{n})|} \int_{B(x_{i},\frac{1}{n})} (f-u_{n})^{2}dx\to 0 \quad \text{as }n\to \infty,
\end{equation}
for every $x_{i}$, $i=1,\ldots,N-1$. Note that \eqref{lp_like} in fact implies
\begin{equation}\label{lp_like_unif}
\sup_{x\in (x_{i},x_{i}+\frac{1}{n})} \left(f(x)-u_{n}(x)\right)^{2} \to 0 \quad\text{and}\quad \sup_{x\in (x_{i}-\frac{1}{n},x_{i})} \left(f(x)-u_{n}(x)\right)^{2} \to 0  \quad  \text{as}\quad n\to \infty,
\end{equation}
for every $i=1,\ldots,N-1$.
Indeed, suppose that \eqref{lp_like_unif} does not hold, say that the first limit fails for some $x_{i}$. Then there exists an $\epsilon>0$ such that for every $k\in\NN$ there exists an $n_{k}\in\NN$, $n_{k}>k$, and a $x_{n_{k}}\in (x_{i},x_{i}+\frac{1}{n_{k}})$ such that
\begin{equation}\label{lp_tocontra}
\left ( f(x_{n_{k}})-u_{n_{k}}(x_{n_{k}}) \right )^{2}\ge \epsilon.
\end{equation}
Since $f$ is right continuous in $x_{i}$, we can pick a large enough $k\in\NN$ such that $|f(x_{i+})-f(x)|< \sqrt{\epsilon}/4$ for every $x\in (x_{i},x_{i}+\frac{1}{n_{k}})$. Then from \eqref{lp_tocontra} and Proposition \ref{lbl:weight_fid_opt} it follows that $u_{n_{k}}$ is constant in $(x_{i},x_{i}+\frac{1}{n_{k}})$, i.e., $u_{n_{k}}=c_{k}$ and $|f(x)-c_{k}|\ge \sqrt{\epsilon}/2$ for every $x\in (x_{i},x_{i}+\frac{1}{n_{k}})$. However, this violates \eqref{lp_like} as for large $k$ we would have
\begin{align*}
\frac{1}{|B(x_{i},\frac{1}{n_{k}})|} \int_{B(x_{i},\frac{1}{n_{k}})} (f-u_{n_{k}})^{2}dx&\ge \frac{2n_{k}}{n_{k}} \inf_{x\in(x_{i},x_{i}+\frac{1}{n_{k}})} (f(x)-u_{n_{k}}(x))^{2}\ge \epsilon.
\end{align*}
Thus \eqref{lp_like_unif} holds. Notice also that it  holds as well if we replace the squares with absolute values. Note now that from Proposition \ref{lbl:weight_fid_discont}, $u_{n}$ is continuous in the intervals $(x_{i},x_{i}+\frac{1}{n})$ and 
$(x_{i}-\frac{1}{n},x_{i})$. Then it can be readily checked that (any solution) $u_{n}$ will be just a monotonic interpolation between $u_{n}^{l}(x_{i}+\frac{1}{n})$ and $u_{n}^{r}(x_{i+1}-\frac{1}{n})$, for $i=1,\ldots, N-1$ and constant in $(a,x_{1}-\frac{1}{n})$ and $(x_{N-1}+\frac{1}{n},b)$ taking the values $u_{n}^{r}(x_{1}-\frac{1}{n})$ and $u_{n}^{l}(x_{N-1}+\frac{1}{n})$ respectively in those intervals. Since $f_{0}=f+\eta$ with $\eta$ being continuous and vanishing at every $x_{i}$, using \eqref{lp_like_unif} we have  that 
\begin{equation}\label{lp_like_unif2}
\sup_{x\in (x_{i},x_{i}+\frac{1}{n})} |f_{0}(x)-u_{n}(x)| \to 0 \quad\text{and}\quad \sup_{x\in (x_{i}-\frac{1}{n},x_{i})} |f_{0}(x)-u_{n}(x)| \to 0  \quad  \text{as}\quad n\to \infty.
\end{equation}
It is then straightforward that $u_{n}$ converges to $f_{0}$ uniformly.
\end{proof}

While the result of  Proposition \ref{lbl:exact_pc_w}  could be considered of theoretical value only, note however that using similar techniques as in Section \ref{sec:discontinuities} one can show that for the solution $u$ of \eqref{weight_fid} it holds
\[\underline{f}(x)\le\underline{u}(x)\le \overline{u}(x)\le\overline{f}(x),\quad \text{for every } x\in J_{f}.\]
Thus, if the noise function $\eta$ changes the values of $f_{0}$ at its jump points one cannot expect to recover $f_{0}$ using the limiting process of Proposition \ref{lbl:exact_pc_w}. It might happen for instance that $f=f_{0}+\eta$ satisfies $\underline{f_{0}}(x)<\underline{f}(x)< \overline{f}(x)<\overline{f_{0}}(x)$ at a jump point $x$ and thus any solution $u$ of \eqref{weight_fid} would satify
\[\underline{f_{0}}(x)<\underline{f}(x)\le \underline{u}(x)\le  \overline{u}(x)\le \overline{f}(x)<\overline{f_{0}}(x).\]

Let us remark also here that it is essential that the values of $w_{n}$ grow at a high rate  around $x_{i}$. For instance, if $\|w_{n}\|_{L^{1}(\om)}\to 0$,  we have that the solutions $u_{n}$ tend to a constant, up to a subsequence. Indeed, in that case we have
\[|Du_{n}|(\om)\le \frac{1}{2}\int_{\om}w_{n}(f-u_{n})^{2}dx+|Du_{n}|(\om)\le\frac{1}{2} \int_{\om}w_{n}f^{2}\,dx\le \frac{1}{2}\|f^{2}\|_{\infty}\|w_{n}\|_{L^{1}(\om)}\to 0,\quad \text{as }n\to\infty.\]
Denoting by $u_{\om}$ the mean value of $u$ in $\om$, from the Poincar\'e inequality we get 
\[|(u_{\om})_{n}|\le C(\|u_{n}\|_{L^{1}(\om)}+\|u_{n}-(u_{\om})_{n}\|_{L^{1}(\om)})\le C(1+|Du_{n}|(\om))<C,\]
for a generic constant $C$.
Thus, there exists a subsequence $(u_{\om})_{n_{k}}\to c$ as $k\to\infty$. Then
\begin{align*}
\|u_{n_{k}}-c\|_{L^{1}(\om)}&\le \|(u_{\om})_{n_{k}}-c\|_{L^{1}(\om)}+\|u_{n_{k}}-(u_{\om})_{n_{k}}\|_{L^{1}(\om)}\\
&\le |\om||(u_{\om})_{n_{k}}-c|+C|Du_{n_{k}}|(\om)\to 0,\quad\text{as }k\to\infty.
\end{align*}

\begin{figure}[t!]
\begin{center}
\begin{subfigure}[t]{0.35\textwidth}
	\centering
	\captionsetup{width=.9\linewidth}
	\includegraphics[width=0.9\textwidth]{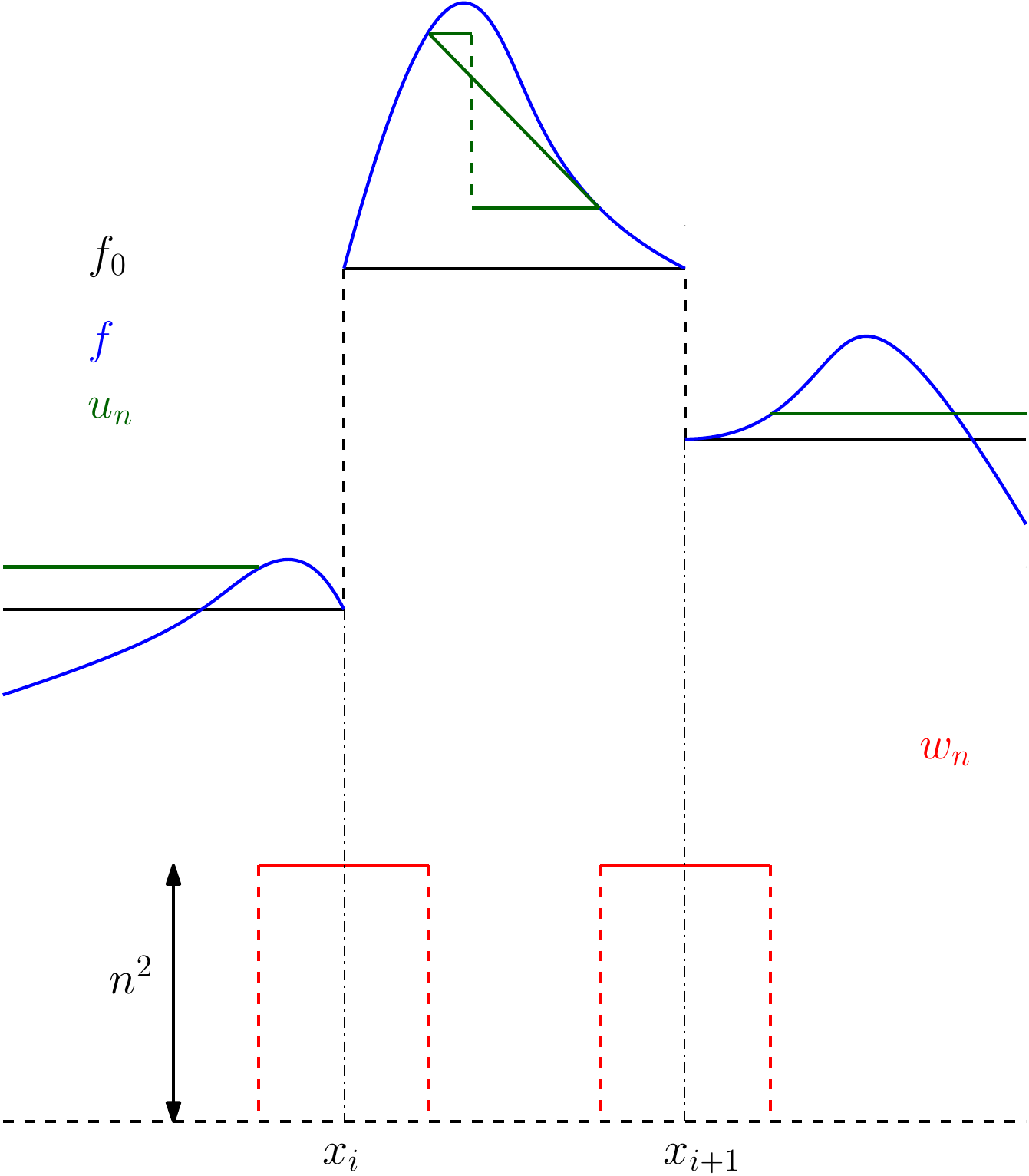}
	\caption{}
	\label{fig:highw1}
\end{subfigure}\hspace{0.05\textwidth}
\begin{subfigure}[t]{0.35\textwidth}
	\centering
	\captionsetup{width=.9\linewidth}
	\includegraphics[width=0.9\textwidth]{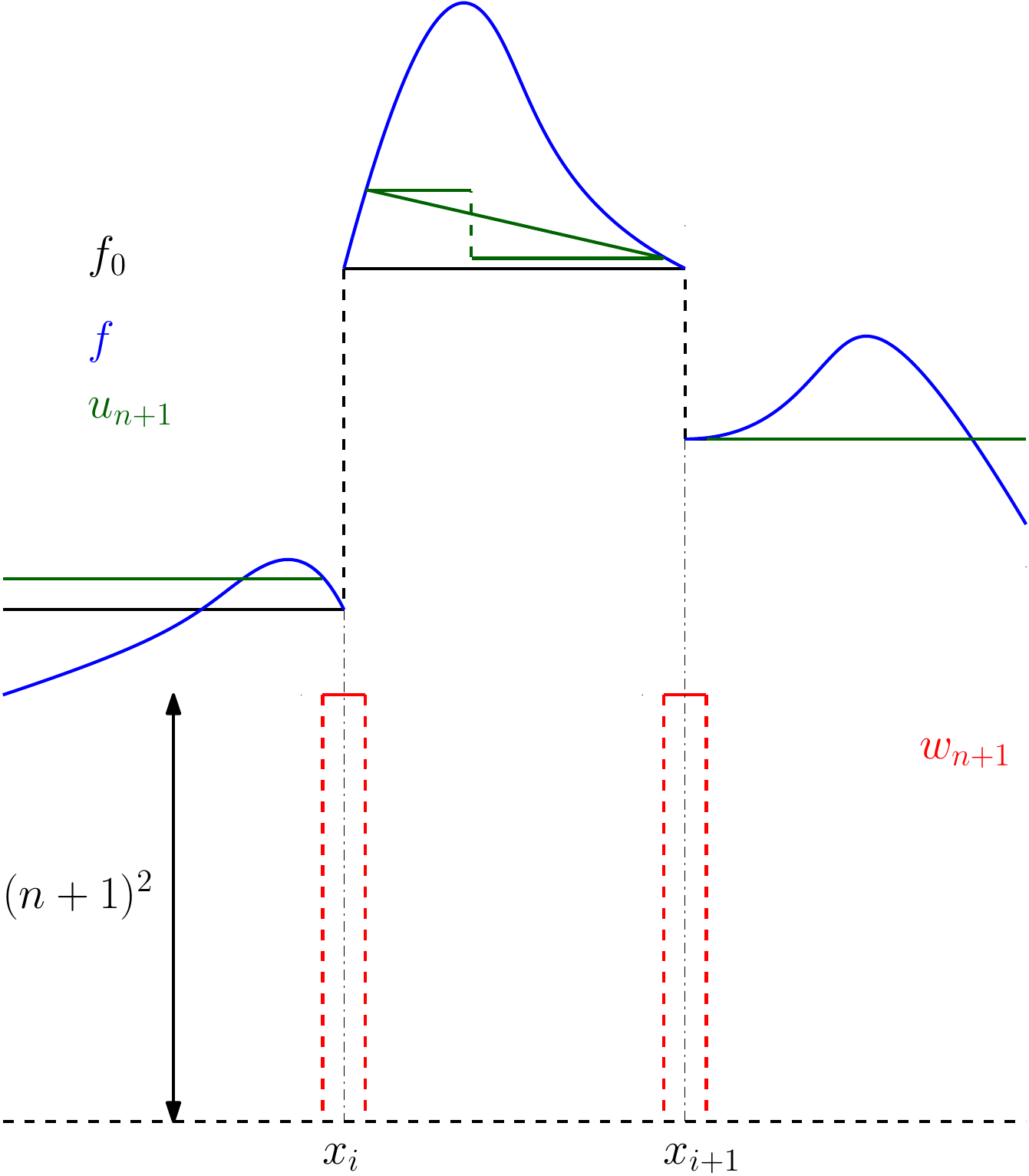}
	\caption{}
	\label{fig:highw2}
\end{subfigure}
\caption{Illustration of the result of Proposition \ref{lbl:exact_pc_w}. While  a piecewise constant function cannot be recovered exactly using the weighted fidelity total variation regularisation \eqref{weight_fid},   this can be achieved in the limit,  as the mass of the weight functions $w_{n}$ is concentrated with a high rate at the jump points of the data $f$}
\label{fig:highw}
\end{center}
\end{figure}

Figure \ref{fig:highw} describes the limiting process of Proposition \ref{lbl:exact_pc_w}. As $n$ tends to infinity and the mass of the weight $w$ is concentrated at the jump points $x_{i}$, the solution $u_{n}$ of \eqref{weight_fid} approaches $f$ (and hence $f_{0}$) near these jump points, while in the areas where $w_{n}$ is zero, $u_{n}$ is a monotonic interpolation. Note that this interpolation is not unique as depicted in the middle part of the Figures \ref{fig:highw1}--\ref{fig:highw2}.

Finally for the reader's convenience, we summarise in Table \ref{differences_table} the differences between the weighted total variation \eqref{weighted_rof}   and the weighted fidelity  \eqref{weight_fid} models.

{\footnotesize
\setlength\extrarowheight{3pt}
\begin {table}[t]
\begin{tabular}{| c | c | c |}
\cline{2-3}
    \multicolumn{1}{c|}{} &\textbf{Weighted $\boldsymbol{\tv}$ model }	& \textbf{Weighted fidelity model }	\\[3pt]\cline{2-3} \cline{1-1}
Creation of new discontinuities &Potentially    & 	No 	\\[3pt]\hline
Solution $u$ is constant when $u\ne f$ &Not necessarily   &	Yes \\[3pt]\hline
\begin{tabular}{@{}c@{}}Exact recovery of piecewise  \\[-3pt] constant functions\\[2pt]\end{tabular}&Yes, setting $\alpha=0$ at jump points & Only in the limit $w\to\infty$ at jump points	\\[3pt]\hline
\end{tabular}
\vspace{10pt}
\caption{Summary of the differences between the weighted total variation \eqref{weighted_rof}   and the weighted fidelity  \eqref{weight_fid} models}
\label{differences_table}
\end{table}
}

\section{Conclusions}\label{conclusions}

We have performed a thorough analysis of the weighted total variation regularisation problem in dimension one. We studied conditions under which discontinuities are created in the solution, at points where the data function is continuous. A variety of analytical and numerical results was provided. Moreover,  in contrast to the standard scalar total variation regularisation only a partial version of the parameter semigroup property holds in the weighted case. We further computed some analytic solutions for simple data and weight functions. We were able to infer a bound of the total variation of the solution by the total variation of the data, an estimate which was used to show the well-posedness of the weighted total variation minimisation problem even in the case of vanishing weight function. It was shown, that by using vanishing weights one can recover exactly piecewise constant data. Finally, the total variation problem with weighted fidelity term was considered and it was shown that even for very simple data, its solutions can be very different to those of the weighted total variation problem. 

Even though a few of our results hold in arbitrary dimension, the main statements of this paper are in dimension one and their proofs rely on a fine scale one dimensional analysis. Extension of these results in higher dimensions is a subject of future research. We note that a higher dimension study poses some extra challenges, regarding for example the derivation of the predual problem, a characterisation of the subdifferential of the weighted $\tv$, as well as deriving a bound on the total variation of the solution as we have done in Section \ref{sec:boundTV}.

\subsection*{Acknowledgements}This research was partially carried out in the framework of {\sc Matheon} supported by the Einstein Foundation Berlin within the ECMath projects OT1, SE5 and SE15 as well as by the DFG under grant no.~HI 1466/7-1 ``Free Boundary Problems and Level Set Methods'' and SFB/TRR154. KP acknowledges the financial support of Alexander von Humboldt Foundation. A large part of this work was done while  KP was at the Institute for Mathematics, Humboldt University of Berlin.

\bibliographystyle{amsalpha}
\bibliography{kostasbib}

\newcommand{\etalchar}[1]{$^{#1}$}
\providecommand{\bysame}{\leavevmode\hbox to3em{\hrulefill}\thinspace}
\providecommand{\MR}{\relax\ifhmode\unskip\space\fi MR }
\providecommand{\MRhref}[2]{%
  \href{http://www.ams.org/mathscinet-getitem?mr=#1}{#2}
}
\providecommand{\href}[2]{#2}
\begin{thebibliography}{CCDlR{\etalchar{+}}15}

\bibitem[ABCH07]{Almansa2007}
A.~Almansa, C.~Ballester, V.~Caselles, and G.~Haro, \emph{A {TV} based
  restoration model with local constraints}, Journal of Scientific Computing
  \textbf{34} (2007), no.~3, 209--236,
  \url{http://dx.doi.org/10.1007/s10915-007-9160-x}.

\bibitem[ABM14]{attouch2014variational}
H.~Attouch, G.~Buttazzo, and G.~Michaille, \emph{Variational analysis in
  {S}obolev and {BV} spaces: Applications to {PDE}s and optimization}, vol.~17,
  SIAM, 2014.

\bibitem[AFP00]{AmbrosioBV}
L.~Ambrosio, N.~Fusco, and D.~Pallara, \emph{{Functions of bounded variation
  and free discontinuity problems}}, Oxford University Press, USA, 2000.

\bibitem[AJNO15]{novaga_weighted}
P.~Athavale, R.L. Jerrard, M.~Novaga, and G.~Orlandi, \emph{Weighted {TV}
  minimization and application to vortex density models}, preprint (2015),
  \url{http://cvgmt.sns.it/media/doc/paper/2790/AJNO.pdf}.

\bibitem[All08a]{Allard1}
W.~Allard, \emph{Total variation regularization for image denoising, {I}.
  {G}eometric theory}, SIAM Journal on Mathematical Analysis \textbf{39}
  (2008), no.~4, 1150--1190, \url{http://dx.doi.org/10.1137/060662617}.

\bibitem[All08b]{Allard2}
\bysame, \emph{Total variation regularization for image denoising, {II}.
  {E}xamples}, SIAM Journal on Imaging Sciences \textbf{1} (2008), no.~4,
  400--417, \url{http://dx.doi.org/10.1137/070698749}.

\bibitem[All09]{Allard3}
\bysame, \emph{Total variation regularization for image denoising, {III}.
  {E}xamples.}, SIAM Journal on Imaging Sciences \textbf{2} (2009), no.~2,
  532--568, \url{http://dx.doi.org/10.1137/070711128}.

\bibitem[AV94]{Acar94analysisof}
R.~Acar and C.R. Vogel, \emph{Analysis of bounded variation penalty methods for
  ill-posed problems}, Inverse Problems \textbf{10} (1994), no.~6, 1217--1229,
  \url{http://dx.doi.org/10.1088/0266-5611/10/6/003}.

\bibitem[BCRS]{bertalmio_caselles}
M.~Bertalmio, V.~Caselles, B.~Roug{\'e}, and A.~Sol{\'e}, \emph{{TV} based
  image restoration with local constraints}, Journal of Scientific Computing
  \textbf{19}, no.~1, 95--122, \url{http://dx.doi.org/10.1023/A:1025391506181}.

\bibitem[BDH13]{bredies2013spatially}
K.~Bredies, Y.~Dong, and M.~Hinterm{\"u}ller, \emph{Spatially dependent
  regularization parameter selection in total generalized variation models for
  image restoration}, International Journal of Computer Mathematics \textbf{90}
  (2013), no.~1, 109--123,
  \url{http://dx.doi.org/10.1080/00207160.2012.700400}.

\bibitem[BKP10]{tgv}
K.~Bredies, K.~Kunisch, and T.~Pock, \emph{Total generalized variation}, SIAM
  Journal on Imaging Sciences \textbf{3} (2010), no.~3, 492--526,
  \url{http://dx.doi.org/10.1137/090769521}.

\bibitem[BKV13]{BrediesL1}
K.~Bredies, K.~Kunisch, and T.~Valkonen, \emph{Properties of
  {L}$^1$-{TGV}$^{\,2}$ : The one-dimensional case}, Journal of Mathematical
  Analysis and Applications \textbf{398} (2013), no.~1, 438 -- 454,
  \url{http://dx.doi.org/10.1016/j.jmaa.2012.08.053}.

\bibitem[BPPS16]{journal_tvlp}
M.~Burger, K.~Papafitsoros, E.~Papoutsellis, and C.B. Sch\"onlieb,
  \emph{Infimal convolution regularisation functionals of {BV} and
  $\mathrm{L}^{p}$ spaces. {P}art {I}: The finite $p$ case}, Journal of
  Mathematical Imaging and Vision \textbf{55} (2016), no.~3, 343--369,
  \url{http://dx.doi.org/10.1007/s10851-015-0624-6}.

\bibitem[CCDlR{\etalchar{+}}15]{carola_spatial_2}
L.~Calatroni, V.C. Cao, J.C. De~los Reyes, C.B. Sch\"onlieb, and T.~Valkonen,
  \emph{Bilevel approaches for learning of variational imaging models}, arXiv
  preprint 1505.02120 (2015), \url{https://arxiv.org/abs/1505.02120}.

\bibitem[CCN07]{caselles2007discontinuity}
V.~Caselles, A.~Chambolle, and M.~Novaga, \emph{The discontinuity set of
  solutions of the {TV} denoising problem and some extensions}, Multiscale
  Modeling \& Simulation \textbf{6} (2007), no.~3, 879--894,
  \url{http://dx.doi.org/10.1137/070683003}.

\bibitem[CDlRS16]{carola_spatial_1}
V.C. Cao, J.C. De~los Reyes, and C.B. Sch\"onlieb, \emph{Learning optimal
  spatially-dependent regularization parameters in total variation image
  restoration}, arXiv preprint 1603.09155 (2016),
  \url{http://arxiv.org/abs/1603.09155}.

\bibitem[CDPP16]{poon_TV_geometric}
A.~Chambolle, V.~Duval, G.~Peyr{\'e}, and C.~Poon, \emph{Geometric properties
  of solutions to the total variation denoising problem}, arXiv preprint
  1602.00087 (2016), \url{http://arxiv.org/abs/1602.00087}.

\bibitem[CE05]{chanL1}
T.F. Chan and S.~Esedoglu, \emph{Aspects of total variation regularized ${L}^1$
  function approximation}, SIAM Journal on Applied Mathematics (2005),
  1817--1837, \url{http://dx.doi.org/10.1137/040604297}.

\bibitem[DAG09]{duvalL1}
V.~Duval, J.F. Aujol, and Y.~Gousseau, \emph{The {TV}${L1}$ model: a geometric
  point of view}, SIAM Journal on Multiscale Modeling \& Simulation \textbf{8}
  (2009), no.~1, 154--189, \url{http://dx.doi.org/10.1137/090757083}.

\bibitem[DHRC10a]{Dong2010}
Y.~Dong, M.~Hinterm{\"u}ller, and M.~M. Rincon-Camacho, \emph{Automated
  regularization parameter selection in multi-scale total variation models for
  image restoration}, Journal of Mathematical Imaging and Vision \textbf{40}
  (2010), no.~1, 82--104, \url{http://dx.doi.org/10.1007/s10851-010-0248-9}.

\bibitem[DHRC10b]{Dong2010b}
\bysame, \emph{A multi-scale vectorial {L}$^\tau$-{TV} framework for color
  image restoration}, International Journal of Computer Vision \textbf{92}
  (2010), no.~3, 296--307, \url{http://dx.doi.org/10.1007/s11263-010-0359-1}.

\bibitem[DM93]{dalmasogamma}
G.~Dal~Maso, \emph{Introduction to {$\Gamma$}-convergence}, Birkh\"auser, 1993.

\bibitem[EG92]{EvansGariepy}
L.C. Evans and R.F. Gariepy, \emph{{Measure theory and fine properties of
  functions}}, CRC Press, Boca Raton, FL, 1992.

\bibitem[ET76]{ekeland1976convex}
I.~Ekeland and R.~Temam, \emph{Convex analysis and variational problems},
  vol.~1, North Holland, 1976.

\bibitem[FMM12]{frick2012}
K.~Frick, P.~Marnitz, and A.~Munk, \emph{Statistical multiresolution dantzig
  estimation in imaging: {F}undamental concepts and algorithmic framework},
  Electronic Journal of Statistics \textbf{6} (2012), 231--268,
  \url{http://dx.doi.org/10.1214/12-EJS671}.

\bibitem[Giu84]{giusti1984minimal}
E.~Giusti, \emph{Minimal surfaces and functions of bounded variation},
  Birkh\"auser, 1984.

\bibitem[Gra07]{grasmair2007equivalence}
M.~Grasmair, \emph{The equivalence of the taut string algorithm and
  {BV}-regularization}, Journal of Mathematical Imaging and Vision \textbf{27}
  (2007), no.~1, 59--66, \url{http://dx.doi.org/10.1007/s10851-006-9796-4}.

\bibitem[HMS{\etalchar{+}}12]{Hotz2012543}
Y.~Hotz, P.~Marnitz, R.~Stichtenoth, L.~Davies, Z.~Kabluchko, and A.~Munk,
  \emph{Locally adaptive image denoising by a statistical multiresolution
  criterion}, Computational Statistics \& Data Analysis \textbf{56} (2012),
  no.~3, 543 -- 558, \url{http://dx.doi.org/10.1016/j.csda.2011.08.018}.

\bibitem[HR16]{hint_rau}
M.~Hinterm\"uller and C.N. Rautenberg, \emph{Optimal selection of the
  regularization function in a generalized total variation model. {P}art {I}:
  {M}odelling and theory}, WIAS Preprint No. 2235 (2016),
  \url{http://www.wias-berlin.de/preprint/2235/wias_preprints_2235.pdf}.

\bibitem[HRC10]{hint_rincon}
M.~Hinterm\"uller and M.M. Rincon-Camacho, \emph{Expected absolute value
  estimators for a spatially adapted regularization parameter choice rule in
  {L}$^1$-{TV}-based image restoration}, Inverse Problems \textbf{26} (2010),
  no.~8, 085005, \url{http://stacks.iop.org/0266-5611/26/i=8/a=085005}.

\bibitem[HRWL16]{hint_rau_tao_langer}
M.~Hinterm\"uller, C.N. Rautenberg, T.~Wu, and A.~Langer, \emph{Optimal
  selection of the regularization function in a generalized total variation
  model. {P}art {II}: {A}lgorithm, its analysis and numerical tests}, WIAS
  Preprint No. 2236 (2016),
  \url{http://www.wias-berlin.de/preprint/2236/wias_preprints_2236.pdf}.

\bibitem[Jal14]{jalalzai2014discontinuities}
K.~Jalalzai, \emph{Discontinuities of the minimizers of the weighted or
  anisotropic total variation for image reconstruction}, arXiv preprint
  1402.0026 (2014), \url{http://arxiv.org/abs/1402.0026}.

\bibitem[Jal15]{Jalalzai2015jmiv}
\bysame, \emph{Some remarks on the staircasing phenomenon in total
  variation-based image denoising}, Journal of Mathematical Imaging and Vision
  \textbf{54} (2015), no.~2, 256--268,
  \url{http://dx.doi.org/10.1007/s10851-015-0600-1}.

\bibitem[Mey01]{meyer2001oscillating}
Y.~Meyer, \emph{Oscillating patterns in image processing and nonlinear
  evolution equations: the fifteenth {D}ean {J}acqueline {B. L}ewis memorial
  lectures}, vol.~22, American Mathematical Society, 2001.

\bibitem[OBG{\etalchar{+}}05]{OBG}
S.~Osher, M.~Burger, D.~Goldfarb, J.~Xu, and W.~Yin, \emph{An iterative
  regularization method for total variation-based image restoration},
  Multiscale Modeling \& Simulation \textbf{4} (2005), no.~2, 460--489,
  \url{http://dx.doi.org/10.1137/040605412}.

\bibitem[Olm56]{Olmsted}
J.M.H. Olmsted, \emph{Real variables}, Appleton Century-Crofts, Inc., New York,
  1956.

\bibitem[Pap14]{papafitsorosphd}
K.~Papafitsoros, \emph{Novel higher order regularisation methods for image
  reconstruction}, Ph.D. thesis, University of Cambridge, 2014,
  \url{https://www.repository.cam.ac.uk/handle/1810/246692}.

\bibitem[PB15]{Papafitsoros_Bredies}
K.~Papafitsoros and K.~Bredies, \emph{A study of the one dimensional total
  generalised variation regularisation problem}, Inverse Problems and Imaging
  \textbf{9} (2015), no.~2, 511--550,
  {\url{http://dx.doi.org/10.3934/ipi.2015.9.511}}.

\bibitem[Rin00]{ring2000structural}
W.~Ring, \emph{Structural properties of solutions to total variation
  regularization problems}, ESAIM: Mathematical Modelling and Numerical
  Analysis \textbf{34} (2000), no.~4, 799--810,
  \url{http://dx.doi.org/10.1051/m2an:2000104}.

\bibitem[ROF92]{rudin1992nonlinear}
L.I. Rudin, S.~Osher, and E.~Fatemi, \emph{Nonlinear total variation based
  noise removal algorithms}, Physica D: Nonlinear Phenomena \textbf{60} (1992),
  no.~1-4, 259--268, \url{http://dx.doi.org/10.1016/0167-2789(92)90242-F}.

\bibitem[SC03]{strong2003edge}
D.~Strong and T.~Chan, \emph{Edge-preserving and scale-dependent properties of
  total variation regularization}, Inverse Problems \textbf{19} (2003), no.~6,
  S165, \url{http://dx.doi.org/10.1088/0266-5611/19/6/059}.

\bibitem[SGG{\etalchar{+}}09]{scherzer2009variational}
O.~Scherzer, M.~Grasmair, H.~Grossauer, M.~Haltmeier, and F.~Lenzen,
  \emph{Variational methods in imaging}, Springer, New York, 2009.

\bibitem[Ste15]{S15}
G.~Steidl, \emph{Combined first and second order variational approaches for
  image processing}, Jahresbericht der Deutschen Mathematiker-Vereinigung 2015
  \textbf{117} (2015), no.~2, 133--160,
  \url{http://dx.doi.org/10.1365/s13291-015-0113-2}.

\bibitem[Val15]{valkonen_jump_1}
T.~Valkonen, \emph{The jump set under geometric regularization. {P}art 1: Basic
  technique and first-order denoising}, SIAM Journal on Mathematical Analysis
  \textbf{47} (2015), no.~4, 2587--2629,
  \url{http://dx.doi.org/10.1137/140976248}.

\end{thebibliography}

\end{document}